\numberwithin{equation}{section}
\newcommand{\D}{{\mathbb D}}
\newcommand{\C}{{\mathbb C}}
\newcommand{\N}{{\mathbb N}}
\newcommand{\Z}{{\mathbb Z}}
\newcommand{\K}{{\mathbb K}}
\newtheorem{theo}{Theorem}[section]
\newtheorem{cor}[theo]{Corollary}
\newtheorem{lem}[theo]{Lemma}
\newtheorem{prop}[theo]{Proposition}
 { \theoremstyle{definition}

\newtheorem{Example}[theo]{Example}
\newtheorem{defin}[theo]{Definition}}
\begin{document}
%\allowdisplaybreaks

\newcommand{\arXivNumber}{1705.04005}

\renewcommand{\PaperNumber}{075}

\FirstPageHeading

\ShortArticleName{Derivations and Spectral Triples on Quantum Domains I: Quantum Disk}

\ArticleName{Derivations and Spectral Triples\\ on Quantum Domains I: Quantum Disk}

\Author{Slawomir KLIMEK~$^\dag$, Matt MCBRIDE~$^\ddag$, Sumedha RATHNAYAKE~$^\S$, Kaoru SAKAI~$^\dag$\\ and Honglin WANG~$^\dag$}

\AuthorNameForHeading{S.~Klimek, M.~McBride, S.~Rathnayake, K.~Sakai and H.~Wang}

\Address{$^\dag$~Department of Mathematical Sciences, Indiana University-Purdue University Indianapolis, \\
\hphantom{$^\dag$}~402 N.~Blackford St., Indianapolis, IN 46202, USA}
\EmailD{\href{mailto:sklimek@math.iupui.edu}{sklimek@math.iupui.edu}, \href{mailto:ksakai@iupui.edu}{ksakai@iupui.edu}, \href{mailto:wanghong@imail.iu.edu}{wanghong@imail.iu.edu}}

\Address{$^\ddag$~Department of Mathematics and Statistics, Mississippi State University,\\
\hphantom{$^\ddag$}~175 President's Cir., Mississippi State, MS 39762, USA}
\EmailD{\href{mailto:mmcbride@math.msstate.edu}{mmcbride@math.msstate.edu}}

\Address{$^S$~Department of Mathematics, University of Michigan, \\
\hphantom{$^S$}~530 Church St., Ann Arbor, MI 48109, USA}
\EmailD{\href{mailto:sumedhar@umich.edu}{sumedhar@umich.edu}}

\ArticleDates{Received May 12, 2017, in f\/inal form September 21, 2017; Published online September 24, 2017}

\Abstract{We study unbounded invariant and covariant derivations on the quantum disk. In particular we answer the question whether such derivations come from operators with compact parametrices and thus can be used to def\/ine spectral triples.}

\Keywords{invariant and covariant derivations; spectral triple; quantum disk}

\Classification{46L87; 46L89; 58B34; 58J42}

\section{Introduction}
Derivations in Banach algebras have been intensively studied, originally inspired by applications in quantum statistical mechanics. Problems such as classif\/ication, generator properties, closedness of domains were the focus of the attention. Good overviews are \cite{B} and \cite{S}. More recently derivations were studied in connection with the concept of noncommutative vector f\/ields, partially inspired by Connes work \cite{Connes}.

An abstract def\/inition of a f\/irst-order elliptic operator is given by the concept of a spectral triple. A spectral triple is a triple $(\mathcal A, \mathcal H, \mathcal D)$ where $\mathcal H$ is a Hilbert space on which a $C^*$-algebra $A$ acts by bounded operators, $\mathcal A$ is a dense $^*$-subalgebra of $A$, and $\mathcal D$ is an unbounded self-adjoint operator in $\mathcal H$ such that $[\mathcal D, a]$ is bounded for
$a \in \mathcal A$, and $(I+ \mathcal D^2)^{-1/2}$ is a compact operator.

It is therefore natural to look at a situation where the commutator $[\mathcal D, a]$ is not just bounded but belongs to the algebra
$A$ in $B(\mathcal H)$, i.e., when it is an unbounded derivation of $A$ with domain~$\mathcal A$. The question is then about the compactness of the resolvent.

If $(\mathcal A, \mathcal H, \mathcal D)$ is an even spectral triple then ${\mathcal D}$ is of the form \begin{gather*}
{\mathcal D} = \left[
\begin{matrix}
0 & D \\
D^* & 0
\end{matrix}\right]
\end{gather*}
for a closed operator $D$. Then the spectral triple conditions require compactness of both $(I+D^*D)^{-1}$ and $(I+DD^*)^{-1}$. Those conditions are equivalent to saying that $D$ has {\it compact parametrices}, i.e., there are compact operators $Q_1$ and $Q_2$ such that $Q_1D-I$ and $DQ_2-I$ are compact, see the appendix.

A good example is the irrational rotation algebra, i.e., the noncommutative two-torus, def\/ined as the universal $C^*$-algebra $A_\phi$ with two unitary generators $u$ and $v$ such that $vu = e^{2\pi i\phi}uv$. It has two natural derivations $d_1$, $d_2$, def\/ined on the subalgebra $\mathcal A_\phi$ of polynomials in $u$, $v$ and its adjoints, by the following formulas on generators of~$A_\phi$
\begin{gather*}
d_1(u)=u,\qquad d_1(v)=0,\\
d_2(u)=0,\qquad d_2(v)=v.
\end{gather*}
Those derivations are generators of the torus action on $A_\phi$. In fact, according to \cite{BEJ}, any derivation $d\colon \mathcal A_\phi\to A_\phi$ can be uniquely decomposed into a linear combination of $d_1$, $d_2$ (invariant part) and an approximately inner derivation. The algebra $A_\phi$ has a natural representation in the GNS Hilbert space $L^2(A_\phi)$ with respect to the unique tracial state on $A_\phi$. Then, as described for example in~\cite{Connes}, the combination $D=d_1+id_2$ is implemented in the Hilbert space $L^2(A_\phi)$ by an operator with compact parametrices and thus leads to the canonical even spectral triple for the noncommutative torus.

In this paper we look at unbounded invariant and covariant derivations on the quantum disk, the Toeplitz $C^*$-algebra of the unilateral shift $U$, which has a natural $S^1$ action given by the multiplication of the generator $U\mapsto e^{i\theta}U$. We f\/irst classify such derivations and then look at their implementations in various Hilbert spaces obtained from the GNS construction with respect to an invariant state. We answer the question when such implementations are operators with compact parametrices and thus can be used to def\/ine spectral triples. Surprisingly, no implementation of a covariant derivation in any GNS Hilbert space for a faithful normal invariant state has compact parametrices for a large class of reasonable boundary conditions. This is in contrast with classical analysis, described in the following section, where for a d-bar operator, which is a~covariant derivation on the unit disk, subject to APS-like boundary conditions, the parametrices are compact. Similar analysis for the quantum annulus is contained in the follow-up paper~\cite{KMB2}.

The paper is organized as follows. In Section~\ref{section2} we describe two commutative examples of the circle and the unit disk which provide motivation for the remainder of the paper. In Section~\ref{section3} we review the quantum unit disk. Section~\ref{section4} contains a classif\/ication of invariant and covariant derivations in the quantum disk. In Section~\ref{section5} we classify invariant states on the quantum disk and describe the corresponding GNS Hilbert spaces and representations, while in Section~\ref{section6} we compute the implementations of those derivations in the GNS Hilbert spaces of Section~\ref{section5}. In Section~\ref{section7} we analyze when those implementations have compact parametrices. Finally, in Appendix~\ref{appendixA}, we review some general results about operators with compact parametrices needed for the analysis in Section~\ref{section7}.

\section{Commutative examples}\label{section2}

The subject of this paper is derivations in operator algebras.

\begin{defin}
Let $A$ be a Banach algebra and let $\mathcal{A}$ be a dense subalgebra of $A$. A linear map $d\colon \mathcal{A}\to A$ is called a {\it derivation} if the Leibniz rule holds
\begin{gather*}
d(ab) = ad(b) + d(a)b
\end{gather*}
for all $a,b\in\mathcal{A}$.
\end{defin}

If $A$ is a $*$-algebra, $\mathcal{A}$ is a dense $*$-subalgebra of $A$ and if $d(a^*) = (d(a))^*$, then $d$ is called a~$*$-{\it derivation}.

\begin{defin}
Let $A$ be a Banach algebra and $\mathcal{A}$ be a dense subalgebra of $A$ such that $\mathcal{A}\subsetneqq A $ and $d$ is a derivation with domain $\mathcal{A}$. The derivation $d$ is called {\it closed} if whenever $a_n,a\in\mathcal{A}$, $a_n\to a$ and $d(a_n)\to b$, then we have $d(a) = b$. Moreover, $d$ is called {\it closable} if $a_n\to 0$ and $d(a_n)\to b$ implies $b=0$.
\end{defin}

Closable derivation $d$ can be extended (non-uniquely) to a closed derivation, the smallest of which is called the {\it closure} of~$d$ and denoted by $\overline{d}$. In the following we will describe in some detail two commutative examples that have some features of, and provide a motivation for, our main object of study, the noncommutative disk.

\begin{Example}
Let $A = C(S^1)$ be the $C^*$-algebra of continuous functions on the circle $S^1=\{e^{ix},\, x\in [0,2\pi)\}$. If $\mathcal{A}$ is the algebra of trigonometric polynomials then\begin{gather*}
(da)(x) = \frac{1}{i}\frac{da(x)}{dx}
\end{gather*}
is an example of an unbounded $*$-derivation that is closable.
\end{Example}

Let $\rho_\theta\colon A\to A$ be the one-parameter family of automorphisms of $A$ obtained from rotation $x\mapsto x+\theta$ on the circle. The map $\tau\colon A\to \C$ given by \begin{gather*}
\tau(a)=\frac{1}{2\pi}\int_0^{2\pi}a(x)dx
\end{gather*}
is the unique $\rho_\theta$-invariant state on $A$ and, up to a constant, $d$ is the unique $\rho_\theta$-invariant derivation on $\mathcal{A}$. The Hilbert space $H_\tau$, obtained by the Gelfand--Naimark--Segal (GNS) construction on~$A$ using the state $\tau$, is naturally identif\/ied with $L^2(S^1)$, the completion of $A$ with respect to the usual inner product
\begin{gather*} ||a||^2_\tau=\tau(a^*a)=\frac{1}{2\pi}\int_0^{2\pi}|a(x)|^2dx.\end{gather*}
The representation $\pi_\tau\colon A\to B(H_\tau)$ is given by multiplication: $\pi_\tau(a)f(x)=a(x)f(x)$. Then the operator\begin{gather*}
(D_{\tau} f)(x) = \frac{1}{i}\frac{df(x)}{dx}
\end{gather*}
on domain $\mathcal{D}_{\tau}=\mathcal{A}\subset A\subset H_\tau$ is an {\it implementation} of $d$ in $H_\tau$ because of the relation\begin{gather*}
 [D_{\tau}, \pi_\tau(a)] = \pi_\tau(d(a)),
\end{gather*}
for $a\in\mathcal{A}$. The operator $D_{\tau}$ is rotationally invariant and has compact parametrices because its spectrum is $\Z$ and thus $(\mathcal{A}, H_\tau, D_{\tau})$ is a spectral triple.

\begin{Example}
The second example is the d-bar operator on the unit disk, and it is the motivating example for the rest of the paper.
Let $A = C(\D)$ be the $C^*$-algebra of continuous functions on the disk $\D=\{z\in\C\colon |z|\leq 1\}$. If $\mathcal{A}$ is the algebra of polynomials in~$z$ and $\bar z$ then\begin{gather*}
(da)(z) = \frac{\partial a(z)}{\partial \bar z}
\end{gather*}
is an unbounded, closable derivation in $A$.
\end{Example}

Let $\rho_\theta\colon A\to A$ be the one-parameter family of automorphisms of $A$ given by the rotation $z\to e^{i\theta}z$ on the disk. Notice that $\rho_\theta\colon \mathcal{A}\to\mathcal{A}$. Moreover, $d$ is a~{\it covariant} derivation in $A$ in the sense that it satisf\/ies\begin{gather*}
d(\rho_\theta(a))= e^{-i\theta}\rho_\theta(d(a)),\qquad a\in\mathcal{A}.
\end{gather*}

The map $\tau\colon A\to \C$ given by \begin{gather*}
\tau(a)=\frac{1}{\pi}\int_\D a(z)d^2z,
\end{gather*}
is a $\rho_\theta$-invariant, faithful state on $A$. The GNS Hilbert space $H_\tau$, obtained using the state~$\tau$, is naturally identif\/ied with $L^2(\D,d^2z)$, the completion of $A$ with respect to the usual inner product
\begin{gather*}||a||^2_\tau=\tau(a^*a)=\frac{1}{\pi}\int_\D|a(z)|^2d^2z.\end{gather*}
The representation $\pi_\tau\colon A\to B(H_\tau)$ is given by multiplication: $\pi_\tau(a)f(z)=a(z)f(z)$.
The unitary operator $U_{\tau,\theta}f(z):=f(e^{i\theta}z)$ in $H_\tau$ implements $\rho_\theta$ in the sense that\begin{gather*}
U_{\tau,\theta}\pi_\tau(a)U_{\tau,\theta}^{-1} = \pi_{\tau}(\rho_\theta(a)).
\end{gather*}
Then the covariant operator\begin{gather*}
(D_{\tau} f)(z) = \frac{\partial f(z)}{\partial\bar z}
\end{gather*}
on domain $\mathcal{D}_{\tau}=\mathcal{A}\subset A\subset H_\tau$ is an implementation of $d$ in $H_\tau$, i.e.,
$ [D_{\tau}, \pi_\tau(a)] = \pi_\tau(d(a))$, for all $a\in\mathcal{A}$. The operator $D_{\tau}$ however has an inf\/inite-dimensional kernel, so $(I+D_{\tau}^*D_{\tau})^{-1/2}$ is not compact. This is not a surprise; the disk is a manifold with boundary and we need to impose elliptic type boundary conditions to make~$D_{\tau}$ elliptic, so that it has compact parametrices.

Denote by $D^{\max}_\tau$ the closure of $D_{\tau}$, since there are no boundary conditions on its domain. On the other hand, let $D^{\min}_\tau$ be the closure of $D_{\tau}$ def\/ined on $C_0^\infty(\D)$. While $D^{\min}_\tau$ has no kernel, its cokernel now has inf\/inite dimension. The question then becomes of the existence of a closed operator $D_{\tau}$ with compact parametrices, such that $D^{\min}_\tau\subset D_{\tau}\subset D^{\max}_\tau$; this is answered in positive by Atiyah--Patodi--Singer (APS) type boundary conditions, see~\cite{BBW}. Spectral triples for manifolds with boundary using operators with APS boundary conditions were constructed in~\cite{BS}. References~\cite{C} and~\cite{DD} contain constructions of spectral triples for the quantum disk. Recent general framework for studying spectral triples on noncommutative manifolds with boundary is discussed in~\cite{FGMR}.

\section{Quantum disk}\label{section3}

Let $\{E_k\}$ be the canonical basis for $\ell^2(\N )$, with $\N$ being the set of nonnegative integers, and let $U$ be the unilateral shift, i.e., $UE_k = E_{k+1}$. Let $A$ be the $C^*$-algebra generated by $U$. The algebra~$A$ is called the Toeplitz algebra and by Coburn's theorem~\cite{Cob} it is the universal $C^*$-algebra with generator $U$ satisfying the relation $U^*U=I$, i.e., $U$ is an isometry. Reference \cite{KL} argues that this algebra can be thought of as a quantum unit disk. Its structure is described by the following short exact sequence
\begin{gather*}
0\longrightarrow \mathcal{K} \longrightarrow A \longrightarrow C\big(S^1\big) \longrightarrow 0,
\end{gather*}
where $ \mathcal{K} $ is the ideal of compact operators in $\ell^2(\N )$. In fact $\mathcal{K}$ is the commutator ideal of the algebra $A$.

We will use the diagonal label operator $\K E_{k} = kE_{k}$, so that, for a bounded function $a\colon \N \to\C$, we can write $a(\K) E_{k} = a(k)E_{k}$. We have the following useful commutation relation for a~diagonal operator~$a(\K)$
\begin{gather}\label{CommRel}
a(\K) U = Ua(\K +1).
\end{gather}

We call a function $a\colon \N \to\C$ {\it eventually constant}, if there exists a natural number $k_0$, called the {\it domain constant}, such that $a(k)$ is constant for $k\ge k_0$. The set of all such functions will be denoted by $c_{00}^+$. Let $\operatorname{Pol}(U, U^*)$ be the set of all polynomials in $U$ and $U^*$ and def\/ine
\begin{gather*}
\mathcal{A} = \bigg\{a = \sum_{n\geq 0}U^n a^+_n(\K)+\sum_{n\geq 1}a^-_n(\K)(U^*)^n \colon a^\pm_n(k)\in c_{00}^+,\ \text{f\/inite sums}\bigg\}.
\end{gather*}
We have the following observation.
\begin{prop}\label{pol_prop}
$\mathcal{A} = \operatorname{Pol}(U, U^*)$.
\end{prop}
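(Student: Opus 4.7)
The plan is to prove the two inclusions $\operatorname{Pol}(U,U^*)\subseteq\mathcal{A}$ and $\mathcal{A}\subseteq\operatorname{Pol}(U,U^*)$ separately. Both are essentially computational, relying on the single relation $U^*U = I$ together with the commutation formula~\eqref{CommRel}.

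For the inclusion $\mathcal{A}\subseteq\operatorname{Pol}(U,U^*)$, the core observation is that every $a(\mathbb{K})$ with $a\in c_{00}^+$ already lies in $\operatorname{Pol}(U,U^*)$. Indeed, the rank-one projection $P_j$ onto $E_j$ satisfies $P_j = U^j(U^*)^j - U^{j+1}(U^*)^{j+1}$ and so belongs to $\operatorname{Pol}(U,U^*)$; since $a\in c_{00}^+$ means $a$ agrees with a constant $c$ past some $k_0$, we can write $a(\mathbb{K}) = cI + \sum_{j<k_0}(a(j)-c)P_j$, which is a polynomial in $U$ and $U^*$. Multiplying on the left by $U^n$ or on the right by $(U^*)^n$ then shows that every generator of $\mathcal{A}$, and hence every element of $\mathcal{A}$, is in $\operatorname{Pol}(U,U^*)$.

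For the inclusion $\operatorname{Pol}(U,U^*)\subseteq\mathcal{A}$, I would first reduce an arbitrary word in $U$ and $U^*$ to the normal form $U^n(U^*)^m$ by repeatedly applying $U^*U=I$ to collapse adjacent $U^*U$ pairs. Then I would compute the action of $U^n(U^*)^m$ on basis vectors: $U^n(U^*)^m E_k = E_{k+n-m}$ for $k\geq m$ and $0$ otherwise. If $n\geq m$, this equals $U^{n-m}\chi_{\{k\geq m\}}(\mathbb{K})$, and the characteristic function $\chi_{\{k\geq m\}}$ is eventually constant with domain constant $m$, hence in $c_{00}^+$; this puts the monomial in the first summand of the definition of $\mathcal{A}$. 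If $n<m$, using \eqref{CommRel} to move the diagonal term to the right I obtain $U^n(U^*)^m = \chi_{\{k\geq n\}}(\mathbb{K})(U^*)^{m-n}$, giving a term in the second summand. Since $\mathcal{A}$ is clearly closed under finite sums, this handles arbitrary polynomials.

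I do not expect a serious obstacle; the only point that requires attention is bookkeeping in the normal-form reduction, specifically verifying that the characteristic functions arising from $U^n(U^*)^m$ are indeed eventually constant and carry the correct domain constant, so that the resulting expression truly lies in $\mathcal{A}$ as defined. Both inclusions hinge only on $U^*U=I$ and on the identity $P_j=U^j(U^*)^j-U^{j+1}(U^*)^{j+1}$.
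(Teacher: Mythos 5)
Your proof is correct, and its core coincides with the paper's: the nontrivial inclusion $\mathcal{A}\subseteq\operatorname{Pol}(U,U^*)$ is handled in both cases by the same identities $U^k(U^*)^k=P_{\ge k}$ and $P_k=P_{\ge k}-P_{\ge k+1}$, together with the eventually-constant decomposition of $a(\K)$ (your splitting $a(\K)=cI+\sum_{j<k_0}(a(j)-c)P_j$ is just a cosmetic variant of the paper's $\sum_{k<k_0}a(k)P_k+a_\infty P_{\ge k_0}$). The only genuine divergence is in the easy inclusion $\operatorname{Pol}(U,U^*)\subseteq\mathcal{A}$: the paper uses the commutation relation $a(\K)U=Ua(\K+1)$ to check that $\mathcal{A}$ is closed under products and adjoints, so that it is a $*$-subalgebra containing $U$ and $U^*$, whereas you collapse each word to the normal form $U^n(U^*)^m$ via $U^*U=I$ and verify directly that $U^{n-m}\chi_{\{k\ge m\}}(\K)$, respectively $\chi_{\{k\ge n\}}(\K)(U^*)^{m-n}$, is a single term of the defining sum for $\mathcal{A}$ (your formulas and domain constants check out). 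Your route is slightly more explicit and avoids verifying algebraic closure of $\mathcal{A}$; the paper's route gets, as a by-product, the fact that $\mathcal{A}$ is a $*$-subalgebra, which is convenient later since $\mathcal{A}$ serves as the domain of the derivations. Either way the argument is sound.
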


\begin{proof}
Using the commutation relations~(\ref{CommRel}) it is easy to see that a product of two elements of~$\mathcal{A}$ and the adjoint of an element of $\mathcal{A}$ are still in~$\mathcal{A}$. It follows that $\mathcal{A}$ is a $*$-subalgebra of~$A$. Since~$U$ and $U^*$ are in~$\mathcal{A}$, it follows that $\operatorname{Pol}(U,V)\subset\mathcal{A}$. To prove the reverse inclusion it suf\/f\/ices to show that for any $a\in c_{00}^+$ the operator $a(\K)$ is in $\operatorname{Pol}(U,V)$, as the remaining parts of the sum are already polynomials in~$U$ and $U^*$. To show that $a(\K)\in \operatorname{Pol}(U,V)$, we decompose any $a(k)\in c_{00}^+$ in the following way
\begin{gather*}
a(\K) = \sum_{k=0}^{k_0-1}a(k)P_k + a_\infty P_{\ge k_0},
\end{gather*}
where $a_\infty = \lim\limits_{k\to\infty}a(k)$, $P_k$ is the orthogonal projection onto the one-dimensional subspace generated by~$E_k$ and $P_{\ge k_0}$ is the orthogonal projection onto $\operatorname{span}\{E_{k}\}_{k\ge k_0}$. A straightforward calculation shows that $U^k(U^*)^k = P_{\ge k}$ and $P_k = P_{\ge k} - P_{\ge k+1}$. This completes the proof.
\end{proof}

Let $c$ be the space of convergent sequences, and consider the algebra
\begin{gather*}A_{\rm diag} = \left\{a(\K) \colon \{a(k)\}\in c \right\}.\end{gather*}
This is precisely the subalgebra of all diagonal operators in $A$ and we can view the quantum disk as the semigroup crossed product of $A_{\rm diag}$ with $\N$ acting on $A_{\rm diag}$ via shifts (translation by $n\in\N$), that is \begin{gather*}A = A_{\rm diag} \rtimes_{\rm shift}\N.\end{gather*}
Several versions of the theory of semigroup crossed products exist, see for example \cite{St}.

\section{Derivations on quantum disk}\label{section4}

For each $\theta\in[0,2\pi)$, let $\rho_\theta \colon A\to A$ be an automorphism def\/ined by $\rho_\theta(U) = e^{i\theta}U$ and $\rho_\theta(U^*) = e^{-i\theta}U^*$. It is well def\/ined on all of $A$ because it preserves the relation $U^*U=I$. Alternatively, the action of $\rho_\theta$ can be written using the label operator $\K$ as\begin{gather*}
\rho_\theta(a)=e^{i\theta\K}ae^{-i\theta\K}.
\end{gather*}
It follows that $\rho_\theta(a(\K)) = a(\K)$ for a diagonal operator $a(\K)$ and $\rho_\theta \colon \mathcal{A}\to \mathcal{A}$.

Any derivation $d\colon \mathcal{A}\to A$ that satisf\/ies the relation $\rho_\theta(d(a)) = d(\rho_{\theta}(a))$ will be referred to as a $\rho_\theta$-{\it invariant} derivation. Similarly, any derivation $d\colon \mathcal{A}\to A$ that satisf\/ies the relation $d(\rho_\theta(a))= e^{-i\theta}\rho_\theta(d(a))$ for all $a\in A$ will be referred to as a $\rho_\theta$-{\it covariant} derivation.

Notice that, as a consequence of Proposition \ref{pol_prop}, we have the identif\/ications
\begin{gather*}\{a\in\mathcal{A}\colon \rho_\theta(a) = a\}= \left\{a(\K) \colon \{a(k)\}\in c_{00}^+ \right\}=: \mathcal{A}_{\rm diag},\end{gather*}
and similarly
\begin{gather*}\{a\in A \colon \rho_\theta(a) = a\}= A_{\rm diag}= \left\{a(\K) \colon \{a(k)\}\in c \right\}.\end{gather*}
We will also use the following terminology: we say that a function $\beta\colon \N \to\C$ has {\it convergent increments}, if the sequence of dif\/ferences $\{\beta(k)-\beta(k-1)\}$ is convergent, i.e., is in $c$. The set of all such functions will be denoted by $c_{\rm inc}$. Similarly the set
of {\it eventually linear} functions is the set of $\beta\colon \N \to\C$ such that $\{\beta(k)-\beta(k-1)\}\in c_{00}^+$.

The following two propositions classify all invariant and covariant derivations $d\colon \mathcal{A}\to A$.

\begin{prop}\label{invar_der_rep}
If $d$ is an invariant derivation $d\colon \mathcal{A}\to A$, then there exists a unique function $\beta\in c_{\rm inc}$, $\beta(-1)=0$, such that \begin{gather*}d(a) = [\beta(\K-1), a]\end{gather*} for $a\in\mathcal{A}$. If $d\colon \mathcal{A}\to \mathcal{A}$ then the corresponding
function $\beta(k)$ is eventually linear.
\end{prop}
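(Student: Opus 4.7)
The plan is to reduce the problem to finding $d(U)$ and $d(U^*)$, using $\rho_\theta$-invariance together with the Leibniz rule. Since $\mathcal{A} = \operatorname{Pol}(U,U^*)$ by Proposition~\ref{pol_prop}, and any derivation of a unital algebra satisfies $d(I)=0$, the pair $(d(U),d(U^*))$ determines $d$ on all of $\mathcal{A}$. I then want to recognize $d$ as the formal commutator with an unbounded diagonal operator $\beta(\K-1)$.

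For the main step I exploit invariance to pin down these two elements. The equation $\rho_\theta(d(U)) = d(\rho_\theta(U)) = e^{i\theta}d(U)$ says $d(U)$ lies in the weight-$1$ spectral subspace of the $\rho_\theta$-action on $A$. Reading the condition $e^{i\theta\K}xe^{-i\theta\K}=e^{i\theta}x$ in matrix form in the basis $\{E_k\}$, I find that the only nonzero entries of such an $x$ lie on the subdiagonal, so $x = Uf(\K)$ for some bounded $f\colon \N\to\C$. The membership $x\in A$, combined with the symbol sequence $0\to\mathcal{K}\to A\to C(S^1)\to 0$, forces $f$ to be convergent: the image of $x$ in $C(S^1)$ is a scalar multiple of $e^{i\phi}$, and subtracting the corresponding constant from $f$ leaves a $c_0$ remainder. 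The same argument gives $d(U^*) = g(\K)U^*$ with $g\in c$, and Leibniz applied to $I=U^*U$ yields $f(\K)+g(\K)=0$, i.e.\ $g=-f$.

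Next I set $\beta(-1):=0$ and $\beta(k):=\sum_{j=0}^k f(j)$; then $\beta(k)-\beta(k-1)=f(k)\in c$, so $\beta\in c_{\rm inc}$. A one-line use of the commutation relation~(\ref{CommRel}) gives $[\beta(\K-1),U]=U(\beta(\K)-\beta(\K-1))=Uf(\K)=d(U)$, and similarly $[\beta(\K-1),U^*]=-f(\K)U^*=d(U^*)$. Both $d$ and $a\mapsto[\beta(\K-1),a]$ are then derivations $\mathcal{A}\to A$ agreeing on the generators, so they coincide on $\mathcal{A}$. One technical check: although $\beta(\K-1)$ is generally unbounded, the commutator with an arbitrary element of $\mathcal{A}$ telescopes to a power of $U$ or $U^*$ times a $c$-diagonal operator, hence lands in $A$. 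Uniqueness of $\beta$ follows because any two candidates $\beta_1,\beta_2$ differ by a sequence whose forward difference is zero, hence a constant, pinned to zero by $\beta(-1)=0$.

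For the last clause, if $d$ maps $\mathcal{A}$ into $\mathcal{A}$, then $Uf(\K)=d(U)\in\mathcal{A}$; expanding with respect to the $\rho_\theta$-weight decomposition of $\mathcal{A}$ (the weight-$1$ component of any element of $\mathcal{A}$ lies in $\mathcal{A}$, since it is a single term $Ua_1^+(\K)$ with $a_1^+\in c_{00}^+$) forces $f\in c_{00}^+$, which is exactly the statement that $\beta$ is eventually linear. I expect the principal obstacle to be the Toeplitz-algebra membership argument in the main step: showing that the bounded function $f$ describing $d(U)$ is actually convergent, which requires the specific structure of $A$ via its quotient onto $C(S^1)$ (equivalently, the compactness of $f(\K)-f_\infty I$). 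The remaining work is routine Leibniz and commutation-relation bookkeeping.
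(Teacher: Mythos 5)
Your proposal is correct and takes essentially the same route as the paper: $\rho_\theta$-covariance of $d(U)$ and $d(U^*)$ forces $d(U)=Uf(\K)$ and $d(U^*)=-f(\K)U^*$ with $f\in c$, and $\beta$ is the partial-sum antiderivative of $f$ with $\beta(-1)=0$. The only differences are in bookkeeping: the paper applies the Leibniz rule to $U^*U=I$ first and covariance second (needing an extra step to kill the $P_0$-supported term $g$), and it cites the identification of the $\rho_\theta$-fixed-point algebra with $A_{\rm diag}$ where you rederive convergence of $f$ via the symbol sequence; your explicit treatment of well-definedness of the commutator, uniqueness, and the eventually-linear clause fills in details the paper leaves implicit.
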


\begin{proof}
Let $d(U^*) = f\in A$ and since $U^*U=I$ we get
\begin{gather*}
0 = d(I) = d(U^*U) = d(U^*)U + U^*d(U),
\end{gather*}
which implies that $U^*d(U) = -fU$. This in turn implies that $d(U) = -UfU + g$ for some $g\in A$ such that $U^*g=0$. Notice that $0=UU^*g=(1-P_0)g$.

Applying $\rho_\theta$ to $f$ we get the following\begin{gather*}
\rho_\theta(f) = \rho_\theta(d(U^*)) = d(\rho_\theta(U^*)) = e^{-i\theta}d(U^*) = e^{-i\theta}f .
\end{gather*}
A similar calculation shows that $\rho_\theta(g) = e^{i\theta}g$. Those covariance properties imply that $f=-\alpha(\K)U^*$ for some $\alpha(\K)\in A_{\rm diag}$ and
similarly $g=U\gamma(\K)$. However, since $g=P_0g$, and $P_0U=0$, we must have $g=0$.

Next, def\/ine $\beta\in c_{\rm inc}$ by $\beta(-1):=0$ and
\begin{gather*}\beta(k):=\sum_{j=0}^k\alpha(j).\end{gather*}
Then we have $\alpha(\K) = \beta(\K) - \beta(\K-1)$, and the result follows.
\end{proof}

The following description of covariant derivations is proved exactly the same as the proposition above.

\begin{prop}\label{covar_der_rep}
If $d$ is a covariant derivation on $\mathcal{A}$, then there exists a unique function $\beta\in c_{\rm inc}$, $\beta(-1):=0$, such that
\begin{gather*}d(a) = [U\beta(\K),a]\end{gather*}
for all $a\in\mathcal{A}$.
\end{prop}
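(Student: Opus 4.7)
The plan is to follow the template of the proof of Proposition~\ref{invar_der_rep}, adapting only the covariance-weight computations and the subsequent choice of $\beta$. First, set $f := d(U^*) \in A$. Since $U^*U = I$ is a relation that $d$ must respect, $0 = d(I) = d(U^*)U + U^*d(U)$, which yields $U^*d(U) = -fU$. Multiplying on the left by $U$ and using $UU^* = I - P_0$, one obtains
\begin{gather*}
d(U) = -UfU + g, \qquad g := P_0 d(U),
\end{gather*}
so $g$ satisfies $g = P_0 g$, equivalently $U^*g = 0$.

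Next I would exploit the covariance $d(\rho_\theta(a)) = e^{-i\theta}\rho_\theta(d(a))$. Applying this to $a = U^*$ gives $e^{-i\theta}\rho_\theta(f) = d(\rho_\theta(U^*)) = e^{-i\theta}d(U^*) = e^{-i\theta}f$, hence $\rho_\theta(f) = f$; so $f$ is $\rho_\theta$-invariant and therefore diagonal, $f = \phi(\K)$ with $\phi \in c$. Applying the same relation to $a = U$ and using $d(U) = -UfU + g$, one sees that $\rho_\theta(g) = e^{2i\theta} g$. The elements of $A$ with this covariance weight have the form $U^2 \gamma(\K)$, but the additional constraint $g = P_0 g$ together with $P_0 U^2 = 0$ forces $g = 0$. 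Hence $d(U^*) = \phi(\K)$ and $d(U) = -U\phi(\K)U$.

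Define $\beta \colon \N \to \C$ by $\beta(-1) := 0$ and $\beta(k) - \beta(k-1) := -\phi(k)$, i.e., $\beta(k) = -\sum_{j=0}^{k}\phi(j)$. Since $\phi \in c$, we have $\beta \in c_{\rm inc}$. A direct calculation using the commutation relation~(\ref{CommRel}) and $UU^* = I - P_0$ shows that $U\beta(\K)U^* = \beta(\K-1)$ (here $\beta(-1) = 0$ is used to kill the $P_0$ term), so
\begin{gather*}
[U\beta(\K), U^*] = \beta(\K-1) - \beta(\K) = \phi(\K) = d(U^*),
\end{gather*}
and similarly $[U\beta(\K), U] = U^2(\beta(\K+1) - \beta(\K)) = -U^2\phi(\K+1) = -U\phi(\K)U = d(U)$. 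Since $d$ and $[U\beta(\K),\cdot]$ are both derivations on $\mathcal{A}$ that agree on the generators $U$ and $U^*$, they agree on all of $\mathcal{A}$ by the Leibniz rule.

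For uniqueness, if $\beta_1, \beta_2 \in c_{\rm inc}$ with $\beta_i(-1)=0$ induce the same derivation, then $\delta := \beta_1 - \beta_2$ satisfies $[U\delta(\K), a] = 0$ for every $a \in \mathcal{A}$. Evaluating on $U$ gives $\delta(k+1) = \delta(k)$ for all $k \geq 0$, i.e., $\delta$ is eventually equal to some constant $c$ starting from $k=0$; evaluating on $U^*$ and looking at the $P_0$ component then yields $c = \delta(0) - \delta(-1) = 0$, so $\delta \equiv 0$. The main (minor) obstacle is simply bookkeeping the index shift so that the telescoping of $\beta$ produces the correct~$\phi$ with the correct sign, and confirming that the $P_0$-charge-$2$ component must vanish; everything else is mechanical adaptation of the invariant proof.
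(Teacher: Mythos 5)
Your proof is correct and follows essentially the same route as the paper, whose own proof of this proposition is simply the remark that it is proved exactly as Proposition~\ref{invar_der_rep}; your write-up is precisely that adaptation (the covariance forces $d(U^*)$ to be diagonal and the weight-two remainder $g=P_0g$ to vanish, then telescoping $\phi$ into $\beta$, checking on the generators, and the same uniqueness computation).
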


Reference \cite{BEJ} brought up the question of decomposing derivations into approximately inner and invariant, not approximately inner parts, see also \cite{H,J}.
Below we study when invariant derivations in the quantum disk are approximately bounded/approximately inner. Recall that~$d$ is called {\it approximately inner} if there are $a_n\in A$ such that $d(a) = \lim\limits_{n\to\infty}[a_n,a]$ for $a\in\mathcal{A}$. If $d(a) = \lim\limits_{n\to\infty} d_n(a)$ for bounded derivations $d_n$ on $A$ then $d$ is called {\it approximately bounded}. Note also that any bounded derivation $d$ on $A$ can be written as a commutator $d(a)=[a,x]$ with $x$ in a weak closure of $A$; see \cite{KR,S}. In fact $x$ must belong to the essential commutant of the unilateral shift, which is not well understood~\cite{BH}.

\begin{lem}Let $d$ be a $\rho_\theta$-invariant derivation in $A$ with domain $\mathcal{A}$. If $d$ is approximately bounded then there exists a sequence $\{\mu_n(k)\}\in\ell^\infty$ such that
\begin{gather*}
d(a) = \lim_{n\to\infty}[a,\mu_n(\K-1)]
\end{gather*}
for all $a\in\mathcal{A}$.
\end{lem}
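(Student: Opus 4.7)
The plan is to reduce to bounded $\rho_\theta$-invariant derivations by averaging over the circle action, apply Proposition~\ref{invar_der_rep} to each averaged derivation, and exploit boundedness to force the resulting $\beta_n$ into $\ell^\infty$.

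Suppose $d_n\to d$ pointwise on $\mathcal{A}$ with each $d_n$ a bounded derivation on $A$. Set
\begin{gather*}
\tilde d_n(a) := \frac{1}{2\pi}\int_0^{2\pi}\rho_{-\theta}\bigl(d_n(\rho_\theta(a))\bigr)\, d\theta,
\end{gather*}
interpreted as a Bochner integral in $A$; the integrand is norm-continuous because $\rho_\theta$ is point-norm continuous on $A$ and $d_n$ is bounded. A standard change of variables shows that $\tilde d_n$ is a bounded $\rho_\theta$-invariant derivation with $\|\tilde d_n\|\le\|d_n\|$. To show $\tilde d_n(a)\to d(a)$ for $a\in\mathcal{A}$, write $a$ as its finite $\rho$-spectral decomposition $a=\sum_{|k|\le N}a_{(k)}$ with $a_{(k)}\in\mathcal{A}$ and $\rho_\theta(a_{(k)})=e^{ik\theta}a_{(k)}$, available by Proposition~\ref{pol_prop}. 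A direct computation gives $\tilde d_n(a_{(k)})=\Pi_k(d_n(a_{(k)}))$, where $\Pi_k(b):=\frac{1}{2\pi}\int_0^{2\pi} e^{-ik\theta}\rho_\theta(b)\, d\theta$ is the norm-contractive Fourier projection onto the $\rho$-weight $k$ subspace of $A$. Since $d_n(a_{(k)})\to d(a_{(k)})$ in norm and $\Pi_k(d(a_{(k)}))=d(a_{(k)})$ (the image already has $\rho$-weight $k$ by invariance of $d$), summing over the finitely many $k$ gives $\tilde d_n(a)\to d(a)$.

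Each $\tilde d_n$ is a bounded $\rho_\theta$-invariant derivation, so by Proposition~\ref{invar_der_rep} there exists $\beta_n\in c_{\rm inc}$ with $\beta_n(-1)=0$ and $\tilde d_n(a) = [\beta_n(\K-1),a]$ on $\mathcal{A}$. Evaluating on $U^m$ and iterating the commutation relation~(\ref{CommRel}) yields $\tilde d_n(U^m) = U^m\bigl(\beta_n(\K+m-1)-\beta_n(\K-1)\bigr)$, and since $U^m$ is an isometry,
\begin{gather*}
|\beta_n(k+m-1)-\beta_n(k-1)|\le\|\tilde d_n(U^m)\|\le\|\tilde d_n\|
\end{gather*}
for all $k\ge 0$ and $m\ge 1$. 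Combined with $\beta_n(-1)=0$ this gives $\|\beta_n\|_\infty\le\|\tilde d_n\|$, so $\mu_n:=-\beta_n\in\ell^\infty$ and $\tilde d_n(a)=[a,\mu_n(\K-1)]$; passing to the limit completes the proof.

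The main obstacle is the averaging step: one must verify that the Bochner integral is well-defined (via point-norm continuity of $\rho_\theta$) and that the averaged derivations converge pointwise to $d$ (via the Fourier decomposition of $\mathcal{A}$). The remaining ingredients --- Proposition~\ref{invar_der_rep} and the norm estimate on $\tilde d_n(U^m)$ --- are then routine.
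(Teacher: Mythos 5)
Your argument is correct, but it gets to the conclusion by a route that differs from the paper's in one substantive respect. The paper first converts approximate boundedness into $d(a)=\lim_n[a,b_n]$ for bounded \emph{operators} $b_n$, invoking the cited structural fact that a bounded derivation of $A$ is a commutator with an element of a weak closure of $A$ (the essential commutant of the shift), and then averages the implementing operators $b_n$ over the circle action: the averages are invariant, hence diagonal and in $\ell^\infty$ by construction, and the convergence $[a,(b_n)_{\rm av}]\to d(a)$ is checked on the generator $U$ by a direct norm estimate using $U^*\rho_\theta(b_n)U=\rho_\theta(U^*b_nU)$. You instead average the derivations $d_n$ themselves, obtaining bounded invariant derivations $\tilde d_n$ with $\tilde d_n(a)\to d(a)$ (your Fourier-projection argument via $\Pi_k$ on the finitely many weight components of $a\in\mathcal{A}$ is the analogue of the paper's estimate on generators), and then extract the diagonal symbol $\beta_n$ from Proposition~\ref{invar_der_rep}, deducing $\beta_n\in\ell^\infty$ from $\|\tilde d_n(U^m)\|\le\|\tilde d_n\|$ together with the normalization $\beta_n(-1)=0$. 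What your version buys is self-containedness: it avoids any appeal to the implementation theorem for bounded derivations and uses only the paper's own classification of invariant derivations plus elementary estimates, at the cost of having to justify the Bochner integral (point-norm continuity of $\rho_\theta$, which does hold for the gauge action on the Toeplitz algebra) and the weight decomposition; the paper's version is shorter once the cited result from the derivation literature is granted.
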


\begin{proof}Given an element $a\in A$ we def\/ine its $\rho_\theta$ average $a_{\rm av} \in A$ by\begin{gather*}
a_{\rm av} := \frac{1}{2\pi}\int_0^{2\pi} \rho_\theta(a) d\theta .
\end{gather*}
It follows that $a_{\rm av}$ is $\rho_\theta$-invariant since the Lebesgue measure $d\theta$ is translation invariant. Additionally, all $\rho_\theta$-invariant operators in $\ell^2(\N )$ are diagonal with respect to the basis $\{E_k\}$ so that $a_{\rm av}\in A_{\rm diag}$.

Since by assumption $d$ is approximately bounded, there exists a sequence of bounded operators~$b_n$ such that $d(a) = \lim\limits_{n\to\infty}[a,b_n]$ for all $a\in\mathcal{A}$. It suf\/f\/ices to show that
\begin{gather}\label{diag_conv}
\lim_{n\to\infty}[a,(b_n)_{\rm av}] = d(a),
\end{gather}
since $(b_n)_{\rm av}$ is $\rho_\theta$-invariant for every $\theta$ and hence by Proposition \ref{invar_der_rep} it is given by the commutator with a diagonal operator $\mu_n(\K-1)$ with the property $\{\mu(k)\}\in\ell^\infty$ because of the assumption of boundedness.

It is enough to verify \eqref{diag_conv} on the generators of the algebra $\mathcal{A}$; we show the calculation for $a=U$. We have, equivalently
\begin{gather*}b_n - U^*b_nU\to U^*d(U)\end{gather*}
as $n\to\infty$, and this means that for every $\varepsilon>0$ there exists $N$ such that for all $n>N$ we have
\begin{gather*}\|b_n-U^*b_nU-U^*d(U)\|<\varepsilon.\end{gather*}
So, because $U^*d(U)$ is $\rho_\theta$-invariant, and because
\begin{gather*}U^*\rho_\theta(b_n)U=\rho_\theta(U^*b_nU),\end{gather*}
we have
\begin{gather*}\|\rho_\theta(b_n)-U^*\rho_\theta(b_n)U-U^*d(U)\|=\|\rho_\theta\left(b_n-U^*b_nU-U^*d(U)\right)\|<\varepsilon,\end{gather*}
and thus we get the estimate
\begin{gather*}
\|(b_n)_{\rm av} - U^*(b_n)_{\rm av}U-U^*d(U)\|\le \frac{1}{2\pi}\int_0^{2\pi}\|\rho_\theta(b_n)-U^*\rho_\theta(b_n)U-U^*d(U)\|d \theta < \varepsilon.
\end{gather*}
This completes the proof.
\end{proof}

\begin{theo}
Let $d(a) = [\beta(\K-1), a]$ be a $\rho_\theta$-invariant derivation in $A$ with domain $\mathcal{A}$. If $d$ is approximately bounded then $\{\beta(k)-\beta(k-1)\}\in c_0$, the space of sequences converging to zero.
\end{theo}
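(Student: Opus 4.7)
The plan is to feed the approximation provided by the preceding lemma through the algebra's generator $U$ and then exploit the contrast between boundedness of a sequence and linear growth of its telescoping sums via a Ces\`aro argument.

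First, I would invoke the previous lemma to pick a sequence $\{\mu_n(k)\}\in\ell^\infty$ with
$d(a)=\lim_{n\to\infty}[a,\mu_n(\K-1)]$ for $a\in\mathcal{A}$. Applying both sides to the generator $a=U$ and using the commutation relation~\eqref{CommRel} to pull diagonal operators past $U$, one computes
\begin{gather*}
[\beta(\K-1),U]=U\bigl(\beta(\K)-\beta(\K-1)\bigr),\qquad [U,\mu_n(\K-1)]=-U\bigl(\mu_n(\K)-\mu_n(\K-1)\bigr).
\end{gather*}
Multiplying by $U^*$ on the left and using $U^*U=I$ promotes the norm convergence of the commutators to norm convergence of diagonal operators:
\begin{gather*}
\beta(\K)-\beta(\K-1)=-\lim_{n\to\infty}\bigl(\mu_n(\K)-\mu_n(\K-1)\bigr).
\end{gather*}
Since the operator norm of a diagonal operator equals the $\ell^\infty$ norm of its diagonal sequence, writing $\alpha(k):=\beta(k)-\beta(k-1)$ and $\nu_n(k):=\mu_n(k)-\mu_n(k-1)$ this says $\sup_k|\alpha(k)+\nu_n(k)|\to 0$ as $n\to\infty$.

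Second, since $\beta\in c_{\rm inc}$ the sequence $\alpha(k)$ has a limit $\alpha_\infty\in\C$, and the task reduces to showing $\alpha_\infty=0$. Here the key observation is a mismatch: for fixed $n$ the telescoping sum $\sum_{k=1}^N\nu_n(k)=\mu_n(N)-\mu_n(0)$ is bounded in $N$ by $2\|\mu_n\|_\infty$, whereas $\alpha(k)\to\alpha_\infty$ forces $\frac{1}{N}\sum_{k=1}^N\alpha(k)\to\alpha_\infty$ by Ces\`aro summation. Given $\varepsilon>0$, choose $n$ with $\sup_k|\alpha(k)+\nu_n(k)|<\varepsilon$; then the triangle inequality gives
\begin{gather*}
\biggl|\sum_{k=1}^N\alpha(k)+\mu_n(N)-\mu_n(0)\biggr|\le N\varepsilon.
\end{gather*}
Dividing by $N$ and letting $N\to\infty$ (the $\mu_n$ terms contribute $O(1/N)$) produces $|\alpha_\infty|\le\varepsilon$, and arbitrariness of $\varepsilon$ yields $\alpha_\infty=0$, i.e.\ $\{\beta(k)-\beta(k-1)\}\in c_0$.

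The main obstacle is conceptual rather than computational: one must recognise that boundedness of $\mu_n$ only constrains the \emph{partial sums} of its forward differences, not the differences themselves, so some averaging device is required to exploit it. The Ces\`aro/telescoping pairing above is exactly the bridge that turns the qualitative statement ``$\mu_n$ is bounded'' into the quantitative estimate that forces $\alpha_\infty$ to vanish. Everything else (evaluating on $U$, applying the commutation relation, promoting commutator convergence to diagonal convergence via $U^*$) is a routine application of the structural results already established in Section~\ref{section4}.
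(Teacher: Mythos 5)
Your proposal is correct and follows essentially the same route as the paper: invoke the preceding lemma, evaluate on the generator $U$ to convert norm convergence of commutators into $\sup_k$-convergence of the increment sequences, and then play the boundedness of $\mu_n$ against the telescoped increments. The only difference is cosmetic: where the paper reduces to real sequences and derives a contradiction from the resulting linear growth of $\mu_n(k)$ in $k$, you average the telescoped identity (a Ces\`aro estimate) to bound the limit of $\beta(k)-\beta(k-1)$ by $\varepsilon$ directly, which also avoids splitting into real and imaginary parts.
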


\begin{proof}
By the previous lemma there exists $\{\mu_n(k)\}\in\ell^\infty$ such that
\begin{gather*}
d(a) = \lim_{n\to\infty} [a ,\mu_n(\K-1)]
\end{gather*}
for all $a\in\mathcal{A}$. Without loss of generality assume $\beta(k)$ and $\mu_n(k)$ are real, or else consider the real and imaginary parts separately. Suppose that $\{\beta(k)-\beta(k-1)\}\notin c_0$, then
\begin{gather*}
\lim_{k\to\infty}(\beta(k) - \beta(k-1)) = L\neq 0.
\end{gather*}
We can assume $L>0$; an identical argument works for $L<0$. The above equation implies that
\begin{gather*}
\lim_{n\to\infty}\underset{k}{\sup }|(\mu_n(k)-\mu_n(k-1)) - (\beta(k) - \beta(k-1))| = 0.
\end{gather*}
Therefore for $k$ and $n$ large enough we have
\begin{gather*}
L-\varepsilon \le \mu_n(k) - \mu_n(k-1)\le L+ \varepsilon,
\end{gather*}
and, by telescoping $\mu_n(k)$, we get
\begin{gather*}
\mu_n(k) = (\mu_n(k) - \mu_n(k-1)) + \cdots + (\mu_n(k_0) - \mu_n(k_0-1)) + \mu_n(k_0-1)
\end{gather*}
for some f\/ixed $k_0$. Together this implies that $\mu_n(k)\ge (L-\varepsilon)k + \mu_n(k_0-1)$ which goes to inf\/inity as $k$ goes to inf\/inity. This contradicts the fact that $\{\mu_n(k)\}\in\ell^\infty$ which ends the proof.
\end{proof}

We also have the following converse result.

\begin{theo}
If $d(a) = [\beta(\K-1), a]$ is a $\rho_\theta$-invariant derivation in $A$ with domain $\mathcal{A}$ such that $\{\beta(k) - \beta(k-1)\}\in c_0$, then $d$ is approximately inner.
\end{theo}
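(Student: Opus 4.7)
The plan is to construct an explicit sequence of bounded operators $a_n \in A$ such that $[a_n, a] \to d(a)$ in norm for every $a \in \mathcal{A}$. Set $\alpha(k) := \beta(k) - \beta(k-1)$, which lies in $c_0$ by hypothesis. Even though $\beta$ itself may be unbounded, its increments vanish at infinity, so I would truncate by defining
\begin{equation*}
\beta_n(k) = \begin{cases} \beta(k), & 0 \le k \le n, \\ \beta(n), & k > n, \end{cases}
\end{equation*}
and setting $a_n := \beta_n(\K - 1)$. Each sequence $\{\beta_n(k-1)\}_{k \ge 0}$ is eventually constant and hence lies in $c$, so $a_n \in A_{\rm diag} \subset A$.

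The key computation is exactly the one underlying Proposition~\ref{invar_der_rep}: using the commutation relation~\eqref{CommRel}, one obtains
\begin{equation*}
[a_n, U] = U \alpha_n(\K), \qquad [a_n, U^*] = -\alpha_n(\K) U^*,
\end{equation*}
where $\alpha_n(k) := \beta_n(k) - \beta_n(k-1)$ equals $\alpha(k)$ for $k \le n$ and vanishes for $k > n+1$. Then $\|\alpha_n(\K) - \alpha(\K)\| = \sup_{k > n}|\alpha(k)|$, which tends to zero since $\alpha \in c_0$. Because $\|U\| = \|U^*\| = 1$, this yields $[a_n, U] \to d(U)$ and $[a_n, U^*] \to d(U^*)$ in operator norm.

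To upgrade convergence from the generators to all of $\mathcal{A} = \operatorname{Pol}(U, U^*)$ (Proposition~\ref{pol_prop}), I would note that both $[a_n, \cdot]$ and $d$ are derivations on $\mathcal{A}$, so by the Leibniz rule every monomial in $U$ and $U^*$ is sent to a fixed polynomial expression in the derivatives of the generators, products of the generators themselves, and their adjoints. An induction on the polynomial degree, together with joint continuity of multiplication in norm, then propagates convergence on the generators to every $a \in \mathcal{A}$, giving $d(a) = \lim_n [a_n, a]$ and hence approximate innerness.

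I do not anticipate a serious obstacle: the hypothesis $\alpha \in c_0$ is precisely what is needed for uniform approximation of the increments, and the commutator formulas already established for Proposition~\ref{invar_der_rep} do the heavy lifting. The only care required is the bookkeeping of signs and of the index shift in~\eqref{CommRel}, together with verifying that $a_n$ actually lies in $A$ rather than just being a bounded sequence (which is immediate from $\{\beta_n(k-1)\} \in c$ and $A_{\rm diag} \subset A$).
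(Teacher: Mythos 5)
Your proposal is correct and follows essentially the same route as the paper: the identical truncation $\mu_n(k)=\beta(k)$ for $k\le n$, $\beta(n)$ for $k>n$, with convergence checked on the generators via $\sup_{k>n}|\beta(k)-\beta(k-1)|\to 0$. The only difference is that you spell out the Leibniz-rule/induction step extending convergence from $U$, $U^*$ to all of $\operatorname{Pol}(U,U^*)$, which the paper leaves implicit (and a harmless indexing slip: $\alpha_n(k)$ already vanishes for $k\ge n+1$, not just $k>n+1$).
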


\begin{proof}
We show that there exists a sequence $\{\mu_n(k)\}\in c$ such that $[a,\mu_n(\K-1)]$ converges to $[a,\beta(\K-1)]$ for all $a\in\mathcal{A}$. As before, it is enough to verify this on the generators; we show the calculation for $a=U$. Thus we want to construct $\mu_n$ such that
\begin{gather*}
\lim_{n\to\infty} U^*[U,\mu_n(\K-1)] = U^*[U,\beta(\K-1)] .
\end{gather*}
The above equation is true if and only if the following is true
\begin{gather*}
\lim_{n\to\infty}(\mu_n(k)-\mu_n(k-1)) = \beta(k) - \beta(k-1).
\end{gather*}
The above in turn is true if and only if
\begin{gather*}
\underset{k}{\sup }\left|(\mu_n(k)-\mu_n(k-1)) - (\beta(k)-\beta(k-1))\right| \to 0 \qquad \text{as} \quad n\to\infty
\end{gather*}
is true. Def\/ine the sequence $\{\mu_n\}\in c$ by the following formulas
\begin{gather*}
\mu_n(k) =
\begin{cases}
\beta(k) &\text{for }k\le n, \\
\beta(n) &\text{for }k>n.
\end{cases}
\end{gather*}
It follows that for $k\le n$ we have $\mu_n(k) - \mu_n(k-1) = \beta(k)-\beta(k-1)$ and $\mu_n(k) - \mu_n(k-1) =0$ otherwise. Therefore we have
\begin{gather*}
\lim_{n\to\infty}\underset{k}{\sup }\left|(\mu_n(k)-\mu_n(k-1)) - (\beta(k)-\beta(k-1))\right| = \underset{k>n}{\sup }|\beta(k)-\beta(k-1)| = 0,
\end{gather*}
since $\{\beta(k)-\beta(k-1)\}\in c_0$. Thus the proof is complete.
\end{proof}

Notice that in the above theorem the derivation $d$ need not be bounded. For example, if $\beta(k) = \sqrt{k+1}$ then $\beta(k)-\beta(k-1)\to 0$ as $k\to\infty$, so, by the above theorem, $d$ is approximately inner. However, $d$ is unbounded.

\section{Invariant states}\label{section5}
Next we describe all the invariant states on $A$. If $\tau \colon A\to\C$ is a state, then $\tau$ is called a~$\rho_\theta$-invariant state on $A$ if it satisf\/ies $\tau(\rho_\theta(a)) = \tau(a)$.

Since $A = A_{\rm diag} \rtimes_{\rm shift}\N$, there is a natural expectation $E \colon A\to A_{\rm diag}$, i.e., $E$ is positive, unital and idempotent. For $a\in\mathcal{A}$ we have
\begin{gather}\label{E_expectation}
E(a) = E\bigg(\sum_{n\geq 0}U^n a^+_n(\K)+\sum_{n\geq 1}a^-_n(\K)(U^*)^n\bigg) = a_0(\K),
\end{gather}
and $a_0(\K)\in A_{\rm diag}$. Since $A_{\rm diag}$ is the f\/ixed point algebra for $\rho_\theta$, we immediately obtain the following lemma:

\begin{lem}Suppose $\tau \colon A\to\C$ is a $\rho_\theta$-invariant state on $A$. Then there exists a state $t \colon A_{\rm diag}\to\C$ such that $\tau(a) = t(E(a))$ where $E$ is the natural expectation. Conversely given the natural expectation $E$ and a state $t \colon A_{\rm diag}\to\C$, then $\tau(a) = t(E(a))$ defines a $\rho_\theta$-invariant state on $A$.
\end{lem}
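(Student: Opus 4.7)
The plan is to prove both directions by exploiting the fact that the conditional expectation $E$ coincides with averaging over the circle action $\rho_\theta$. So my first step is to establish the averaging formula
\begin{gather*}
E(a) = \frac{1}{2\pi}\int_0^{2\pi}\rho_\theta(a)\,d\theta
\end{gather*}
for every $a\in A$. On generators this is straightforward: using $\rho_\theta(U)=e^{i\theta}U$ and $\rho_\theta(a(\K))=a(\K)$ one obtains $\rho_\theta(U^n a_n^+(\K))=e^{in\theta}U^n a_n^+(\K)$ and $\rho_\theta(a_n^-(\K)(U^*)^n)=e^{-in\theta}a_n^-(\K)(U^*)^n$, so integrating over $\theta$ kills every nonzero Fourier mode and leaves exactly the $n=0$ term $a_0(\K)=E(a)$ as in formula~(\ref{E_expectation}). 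Since $\mathcal{A}$ is dense in $A$, both sides of the averaging formula are norm-continuous in $a$ (the right-hand side as a Bochner integral of the strongly continuous family $\rho_\theta$, of operator norm one), so the identity extends to all of $A$.

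For the forward direction, assume $\tau$ is a $\rho_\theta$-invariant state. Define $t$ to be the restriction $t:=\tau|_{A_{\rm diag}}$; this is automatically a state on $A_{\rm diag}$. Applying $\tau$ to the averaging formula and using linearity together with $\tau(\rho_\theta(a))=\tau(a)$ gives
\begin{gather*}
t(E(a)) = \tau(E(a)) = \frac{1}{2\pi}\int_0^{2\pi}\tau(\rho_\theta(a))\,d\theta = \tau(a),
\end{gather*}
which is the desired representation.

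For the converse, given a state $t\colon A_{\rm diag}\to\C$, set $\tau(a):=t(E(a))$. Positivity follows because $E$ is a conditional expectation (hence completely positive) and $t$ is positive; unitality follows from $E(I)=I$ and $t(I)=1$; and $\rho_\theta$-invariance of $\tau$ follows from the invariance of the Haar measure on $S^1$ applied to the averaging formula, i.e., $E\circ \rho_\phi = E$ for every $\phi$, so $\tau(\rho_\phi(a))=t(E(\rho_\phi(a)))=t(E(a))=\tau(a)$.

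The only technical obstacle is the passage from $\mathcal{A}$ to all of $A$ in the averaging identity. This hinges on two standard facts: that $\rho_\theta$ is a strongly continuous family of $*$-automorphisms of $A$ (so that $\theta\mapsto\rho_\theta(a)$ is Bochner integrable for every $a\in A$), and that the conditional expectation $E$ associated with the semigroup crossed product decomposition $A = A_{\rm diag}\rtimes_{\rm shift}\N$ is a contractive, hence norm-continuous, linear map. Once these are in place, the rest of the argument reduces to the elementary Fourier-mode calculation on $\mathcal{A}$ sketched above.
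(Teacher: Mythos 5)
Your proof is correct and is essentially the argument the paper has in mind: the paper presents the lemma as immediate from the fact that $A_{\rm diag}$ is the fixed point algebra of $\rho_\theta$, and your averaging identity $E(a)=\frac{1}{2\pi}\int_0^{2\pi}\rho_\theta(a)\,d\theta$ is the standard way to make that precise, with the forward direction given by restricting $\tau$ to $A_{\rm diag}$ and the converse by positivity and unitality of $E$ together with $E\circ\rho_\phi=E$. The only cosmetic difference is that one can bypass the Bochner integral by applying $\tau$ directly to each Fourier mode of $a\in\mathcal{A}$ (as the paper does in the proof of Theorem~\ref{state_decomp}) and then invoking density, but this changes nothing of substance.
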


To parametrize all invariant states we need to f\/irst identify the pure states.
\begin{lem}The pure states on $A_{\rm diag}$ denoted by $t_k$ for $k\in\N$ and $t_\infty$ are given by
\begin{gather*}
t_k(a(\K)) = a(k) = \langle E_k, aE_k\rangle, \\
t_\infty(a(\K)) = \lim_{k\to\infty} a(k) = \lim_{k\to\infty} t_k(a(\K)).
\end{gather*}
\end{lem}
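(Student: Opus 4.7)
The plan is to exploit commutativity: on a commutative unital $C^*$-algebra, a state is pure if and only if it is a character (a multiplicative linear functional). The two formulas in the statement clearly define unital multiplicative functionals, using that the product of convergent sequences converges to the product of the limits for $t_\infty$, so both are pure states. The remaining content of the lemma is the \emph{exhaustion}: every pure state on $A_{\rm diag}$ is one of these, and this is what I would actually prove.

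A high level route is via Gelfand duality: under the identification $a(\K)\leftrightarrow\{a(k)\}$, the algebra $A_{\rm diag}$ is isomorphic to the $C^*$-algebra $c$ of convergent complex sequences, and $c\cong C(\N\cup\{\infty\})$ where $\N\cup\{\infty\}$ is the one-point compactification of $\N$. Characters then correspond bijectively to points of the spectrum, and the finite points give the $t_k$ while the point at infinity gives $t_\infty$.

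For a direct self-contained argument, let $t\colon A_{\rm diag}\to\C$ be a character and let $P_k$ denote the rank-one projection onto the span of $E_k$; these are mutually orthogonal self-adjoint idempotents in $A_{\rm diag}$ with $I-P_{\ge n}=\sum_{k<n}P_k$. Multiplicativity forces $t(P_k)=t(P_k)^2\in\{0,1\}$, and the relation $P_kP_j=0$ for $k\ne j$ forces at most one value $t(P_{k_0})$ to equal $1$. If $t(P_{k_0})=1$ for some $k_0$, then from $a(\K)P_{k_0}=a(k_0)P_{k_0}$ and multiplicativity one gets $t(a(\K))=a(k_0)$, i.e., $t=t_{k_0}$. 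Otherwise $t(P_k)=0$ for every $k$, and hence $t(P_{\ge n})=1$ for every $n$. Writing $a(\K)=a_\infty I+b(\K)$ with $a_\infty=\lim_{k\to\infty}a(k)$ and $b(k)\to 0$, the operator $b(\K)(I-P_{\ge n})$ is a finite linear combination of the $P_j$ with $j<n$, so $t(b(\K))=t(b(\K)P_{\ge n})$ and therefore $|t(b(\K))|\le \|b(\K)P_{\ge n}\|=\sup_{k\ge n}|b(k)|\to 0$ as $n\to\infty$. Hence $t(b(\K))=0$ and $t(a(\K))=a_\infty=t_\infty(a(\K))$, i.e., $t=t_\infty$.

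The only delicate point is the approximation in the second case: one must propagate the vanishing of $t$ from each rank-one projection $P_k$ to every compact diagonal operator. This is handled by the tail estimate $\|b(\K)P_{\ge n}\|=\sup_{k\ge n}|b(k)|$; everything else in the argument is an immediate consequence of multiplicativity and the orthogonality of the $P_k$.
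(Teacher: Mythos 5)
Your proposal is correct. The paper's own proof is essentially your ``high level route'': it identifies $A_{\rm diag}$ with $c\cong C(\N\cup\{\infty\})$ and then simply cites the general fact (Kadison--Ringrose) that the pure states of a commutative unital $C^*$-algebra are the Dirac point measures, so the $t_k$ and $t_\infty$ are exactly the point evaluations. What you do differently is to carry out the exhaustion step by hand: after reducing, as the paper implicitly does, to the standard equivalence ``pure state $=$ character'' on a commutative unital $C^*$-algebra, you compute the character space of $c$ directly from the mutually orthogonal projections $P_k$ and the tail estimate $\|b(\K)P_{\ge n}\|=\sup_{k\ge n}|b(k)|\to 0$. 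The case split (some $t(P_{k_0})=1$ gives $t=t_{k_0}$; all $t(P_k)=0$ forces $t(P_{\ge n})=1$ and hence, via $t(b(\K))=t(b(\K)P_{\ge n})$ and contractivity of states, $t=t_\infty$) is complete and correct, and the one point you flag as delicate is indeed the only place where an estimate rather than pure algebra is needed. What the paper's route buys is brevity, outsourcing both the character identification and the spectrum of $c$ to Gelfand theory; what yours buys is a self-contained verification that does not require knowing the Gelfand spectrum of $c$ in advance, at the cost of still invoking the pure-state/character equivalence, which is general-theory input of comparable depth to what the paper cites.
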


\begin{proof}$A_{\rm diag}$ is a commutative $C^*$-algebra that is isomorphic to the algebra of continuous functions on the one-point compactif\/ication of $\N$, that is
\begin{gather*}
A_{\rm diag} \cong C(\N\cup\{\infty\}).
\end{gather*}
So by general theory, see \cite{KR} for details, the pure states are the Dirac measures (or point mass measures).
\end{proof}

As a consequence, we have the following classif\/ication theorem of the $\rho_\theta$-invariant states on~$A$.

\begin{theo}\label{state_decomp} The $\rho_\theta$-invariant states on $A$ are in the closed convex hull of the states~$\tau_k$ and~$\tau_\infty$ where $\tau_k(a) = t_k(E(a))$ and $\tau_\infty(a) = t_\infty(E(a))$. Explicitly, if $\tau$ is a~$\rho_\theta$-invariant state, there exist weights $w(k)\ge0$ such that $\sum\limits_{k\geq 0} w(k) = 1$ and non-negative numbers~$\lambda_0$ and~$\lambda_\infty$, with $\lambda_0 + \lambda_\infty = 1$ such that
\begin{gather*}
\tau = \lambda_\infty\tau_\infty + \lambda_0\sum_{k\geq 0}w(k)\tau_k.
\end{gather*}
In fact, we have $\sum_kw(k)\tau_k(a) = \operatorname{tr}(w(\K)a)=\tau_w(a)$, and $\lambda_0 = \sum_k\tau(P_k)$, $w(k) = \lambda_0^{-1}\tau(P_k)$, and $\lambda_\infty = 1-\sum\limits_{k\ge0}\tau(P_k)$ where again $P_k$ is the orthogonal projection onto the one-dimensional subspace spanned by $E_k$.
\end{theo}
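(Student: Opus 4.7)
The plan is to apply the immediately preceding two lemmas together with a Riesz-type decomposition of states on a commutative $C^*$-algebra. By the first lemma, every $\rho_\theta$-invariant state on $A$ factors through the expectation as $\tau = t\circ E$ for some state $t$ on $A_{\rm diag}$, so the classification reduces to the classification of states on the commutative algebra $A_{\rm diag}\cong C(\N\cup\{\infty\})$.

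Next I would invoke the Riesz representation theorem on $\N\cup\{\infty\}$: every state $t$ corresponds to a regular Borel probability measure $\mu$ on the one-point compactification, so that $t(a(\K))=\int a\,d\mu$ for $a(\K)\in A_{\rm diag}$. Write $\lambda_\infty:=\mu(\{\infty\})$ and $\lambda_0:=\mu(\N)=1-\lambda_\infty$. If $\lambda_0>0$, set $w(k):=\lambda_0^{-1}\mu(\{k\})$, which defines a sequence of non-negative weights summing to $1$; if $\lambda_0=0$, pick any such sequence. Splitting the integral over $\N$ and $\{\infty\}$ then gives
\begin{gather*}
t(a(\K))=\lambda_0\sum_{k\ge 0}w(k)\,a(k)+\lambda_\infty\lim_{k\to\infty}a(k)=\lambda_0\sum_{k\ge 0}w(k)t_k(a(\K))+\lambda_\infty t_\infty(a(\K)),
\end{gather*}
and composing with $E$ yields the asserted formula $\tau=\lambda_\infty\tau_\infty+\lambda_0\sum_{k\ge 0}w(k)\tau_k$. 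The sum converges absolutely when evaluated on each $a\in A$ because $\tau_k(a)=\langle E_k,aE_k\rangle$ is bounded by $\|a\|$ and $\{w(k)\}\in\ell^1$, which simultaneously gives the closed convex hull description.

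The explicit formulas then follow by evaluating $\tau$ on the diagonal projections. Since $E(P_k)=P_k$ and $P_k$ corresponds to the characteristic function of $\{k\}\subset\N\cup\{\infty\}$, we get $\tau(P_k)=\mu(\{k\})=\lambda_0 w(k)$; summing over $k$ yields $\sum_{k\ge 0}\tau(P_k)=\lambda_0$, hence $\lambda_\infty=1-\sum_{k\ge 0}\tau(P_k)$ and $w(k)=\lambda_0^{-1}\tau(P_k)$ when $\lambda_0>0$. Finally, the identity $\sum_{k}w(k)\tau_k(a)=\operatorname{tr}(w(\K)a)$ is immediate from the fact that $w(\K)a$ is trace-class with matrix diagonal $w(k)\langle E_k,aE_k\rangle=w(k)\tau_k(a)$, noting again that $\tau_k(a)=t_k(E(a))=\langle E_k,aE_k\rangle$ using~\eqref{E_expectation}.

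The only mild subtlety I anticipate is the handling of the point at infinity: one must make sure to isolate $\delta_\infty$ as a distinct atom and not confuse $t_\infty$ with a limit of the $t_k$'s in a topology where it would be redundant. This is resolved cleanly by working on the compactification $\N\cup\{\infty\}$ from the start, where $\infty$ is a genuine point of the spectrum and the Riesz representation separates the two kinds of mass automatically; the rest of the argument is purely bookkeeping.
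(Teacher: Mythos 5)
Your argument is correct, and it reaches the paper's statement by a slightly different mechanism for the key decomposition step. Like the paper, you first reduce to the commutative algebra $A_{\rm diag}\cong C(\N\cup\{\infty\})$ (you do this via the lemma $\tau=t\circ E$; the paper equivalently uses invariance and \eqref{E_expectation} to get $\tau(a)=\tau(a_0(\K))$ on the dense set $\mathcal{A}$). From there, however, you invoke the Riesz representation theorem and the fact that every Borel probability measure on the countable compact space $\N\cup\{\infty\}$ is purely atomic, splitting the mass at $\infty$ from the atoms on $\N$; the paper instead avoids measure theory entirely and argues by hand: it sets $\omega(k)=\tau(P_k)$, decomposes an eventually constant element as $a(\K)=\sum_{k<L}a(k)P_k+a_\infty P_{\ge L}$, applies $\tau$, and rearranges algebraically into the convex combination, extending to all of $A$ by continuity. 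Your route is shorter and more conceptual, and it makes the closed-convex-hull assertion and the role of the point at infinity transparent; the paper's elementary computation stays entirely inside the $C^*$-algebraic setting and produces the explicit identities $\lambda_0=\sum_k\tau(P_k)$, $w(k)=\lambda_0^{-1}\tau(P_k)$, $\lambda_\infty=1-\sum_k\tau(P_k)$ directly from the definition of $\tau$, without passing through a measure. Two small points worth making explicit in your write-up: the step $\int_\N a\,d\mu=\sum_k a(k)\mu(\{k\})$ rests on countable additivity (atomicity of $\mu$ on the countable set $\N$), and the identity $\tau_k(a)=\langle E_k,aE_k\rangle$ for general $a\in A$ (not just diagonal $a$) follows from \eqref{E_expectation} on $\mathcal{A}$ plus continuity, exactly as you indicate; with these noted, the proof is complete and fully consistent with the paper's formulas, including $\sum_k w(k)\tau_k(a)=\operatorname{tr}(w(\K)a)$.
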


\begin{proof}
By continuity it is enough to compute $\tau (a)$ on the dense set~$\mathcal{A}$. Then, by $\rho_\theta$-invariance and equation~(\ref{E_expectation}), we have
\begin{gather*}
\tau (a) = \tau\bigg(\sum_{n\geq 0}U^n a^+_n(\K)+\sum_{n\geq 1}a^-_n(\K)(U^*)^n\bigg) = \tau (a_0(\K) ).
\end{gather*}
Set $\tau(P_k) = \omega(k)$ and notice that $\omega(k)\ge0$ since $\tau(P_k) = \tau(P_k^2) = \tau(P_k^*P_k) \ge 0$. It is clear that $\omega(k)\le 1$ since~$P_k$ are projections. Next decompose any~$a(\K)\in\mathcal{A}$ as in the proof of Proposition~\ref{pol_prop}
\begin{gather*}
a(\K) = \sum_{k =0}^{L-1}a(k)P_k + a_\infty P_{\ge L},
\end{gather*}
where $L$ is the domain constant and $a_\infty$ is the value of $a(k)$ for $k\geq L$ . Applying $\tau$ to this decomposition we get
\begin{gather*}
\tau(a(\K)) =\sum_{k = 0}^{L-1}a(k)\omega(k) + a_\infty \tau(P_{\ge L}) = \sum_{k = 0}^{L-1} a(k)\omega(k) + a_\infty \tau(I - P_0 - P_1 -\cdots -P_{L-1}) \\
\hphantom{\tau(a(\K))}{} = \sum_{k = 0}^{L-1}a(k)\omega(k) + a_\infty \left(1 - \sum_{k=0}^{L-1} \omega(k)\right).
\end{gather*}
On the other hand we have
\begin{gather*}
\sum_{k\geq 0}a(k)\omega(k) = \sum_{k = 0}^{L-1}a(k)\omega(k) + a_\infty \sum_{k\ge L}\omega(k) .
\end{gather*}
Plugging this equation into the previous one we obtain
\begin{gather*}
\tau(a(\K)) = \sum_{k\geq 0} a(k)\omega(k) + a_\infty \left(1 - \sum_{k\ge 0}\omega(k)\right) =\sum_{j\in\N }\omega(j)\left(\sum_{k\geq 0}\frac{\omega(k)a(k)}{\sum_{j\in\N }\omega(j)}\right) \\
\hphantom{\tau(a(\K)) =}{} + a_\infty \left(1 - \sum_{k\ge 0}\omega(k)\right) = \lambda_0\left(\sum_{k\geq 0}\frac{\omega(k)a(k)}{\sum_{j\in\N }\omega(j)}\right) + a_\infty \lambda_\infty.
\end{gather*}
The last equation provides a convex combination of two states $\tau_\infty(a) = a_\infty$ and $\tau_w(a) = \operatorname{tr}(w(\K)a)$ with $w(k) = \frac{\omega(k)}{\sum_j \omega(j)}$ as
$\lambda_0 +\lambda_\infty = 1$. This completes the proof.
\end{proof}

Given a state $\tau$ on $A$ let $H_\tau$ be the GNS Hilbert space and let $\pi_\tau\colon A\to B(H_\tau)$ be the corresponding representation. We describe the three Hilbert spaces and the representations coming from the following three $\rho_\theta$-invariant states: $\tau_w$ with all $w(k)\ne 0$, $\tau_0$, and $\tau_\infty$. The states $\tau_w$ with all $w(k)\ne 0$ are general $\rho_\theta$-invariant faithful normal states on~$A$.

\begin{prop}\label{GNS_prop} The three GNS Hilbert spaces with respect to the $\rho_\theta$-invariant states $\tau_w$ with all $w(k)\ne 0$, $\tau_0$, and $\tau_\infty$ can be naturally identified with the following Hilbert spaces, respectively:
\begin{enumerate}\itemsep=0pt
\item[$1.$] $H_{\tau_w}$ is the Hilbert space whose elements are power series
\begin{gather*}f = \sum_{n\geq 0}U^n f^+_n(\K)+\sum_{n\geq 1}f^-_n(\K)(U^*)^n\end{gather*}
such that
\begin{gather}\label{w_inner_prod}
\|f\|_{\tau_w}^2 =\tau_w(f^*f)=\sum_{n\ge 0} \sum_{k=0}^\infty w(k)|f_n^+(k)|^2 + \sum_{n\ge1}\sum_{k=0}^\infty w(k+n)|f_n^-(k)|^2
\end{gather}
is finite.
\item[$2.$] $H_{\tau_0} \cong \ell^2(\N)$, $\pi_{\tau_0}(U)$ is the unilateral shift.
\item[$3.$] $H_{\tau_\infty} \cong L^2(S^1)$, $\pi_{\tau_\infty}(U)$ is the multiplication by $e^{ix}$.
\end{enumerate}
\end{prop}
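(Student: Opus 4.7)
The plan is to carry out the GNS construction explicitly for each of the three states, writing every $f\in\mathcal{A}$ in the canonical form from Proposition~\ref{pol_prop}, namely $f=\sum_{n\ge 0}U^n f_n^+(\K)+\sum_{n\ge 1}f_n^-(\K)(U^*)^n$ (finite sum), and computing $\tau(f^*f)$ directly on basis vectors. The key observation is that for a weight state $\tau_w(a)=\operatorname{tr}(w(\K)a)$ we have $\tau_w(f^*f)=\sum_{k\ge 0}w(k)\|fE_k\|^2$, so everything reduces to understanding $fE_k$.

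For part~1, I would apply $f$ to $E_k$ using $U^nf_n^+(\K)E_k=f_n^+(k)E_{k+n}$ and $f_n^-(\K)(U^*)^nE_k=f_n^-(k-n)E_{k-n}$ when $k\ge n$, zero otherwise. The basis vectors $\{E_{k+n}\}_{n\ge 0}\cup\{E_{k-n}\}_{1\le n\le k}$ that appear are all distinct, so orthonormality yields $\|fE_k\|^2=\sum_{n\ge 0}|f_n^+(k)|^2+\sum_{n=1}^{k}|f_n^-(k-n)|^2$. Summing against $w(k)$ and re-indexing the second sum by $j=k-n$ gives exactly \eqref{w_inner_prod}. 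Because every $w(k)\ne 0$, this is a genuine norm on $\mathcal{A}$, so the null space is trivial; the GNS space $H_{\tau_w}$ is the abstract completion, and a standard argument (a Cauchy sequence in $\mathcal{A}$ forces each coefficient $f_n^\pm(k)$ to converge weighted by $w(k)$ or $w(k+n)$, producing a formal series with finite norm, while any formal series with finite norm is approximated by polynomial truncations) identifies this completion with the stated space of formal power series.

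For parts~2 and~3 I would specialize this computation. For $\tau_0$, setting $w=\delta_0$ keeps only the $k=0$ term, and the $f^-$ contribution is empty, so $\tau_0(f^*f)=\|fE_0\|^2=\sum_{n\ge 0}|f_n^+(0)|^2$. The map $f\mapsto fE_0$ is then an isometry from $\mathcal{A}/N_{\tau_0}$ into $\ell^2(\N)$ whose range contains every $E_n=U^nE_0$, hence is dense; this gives $H_{\tau_0}\cong\ell^2(\N)$ with $\pi_{\tau_0}$ the defining representation, so $\pi_{\tau_0}(U)$ is the unilateral shift. For $\tau_\infty$, since each $f_n^\pm\in c_{00}^+$ is eventually constant, taking $k\to\infty$ in the expression for $\|fE_k\|^2$ yields $\tau_\infty(f^*f)=\sum_{n\ge 0}|f_n^+(\infty)|^2+\sum_{n\ge 1}|f_n^-(\infty)|^2$. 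Mapping $f$ to the trigonometric polynomial $\sum_{n\ge 0}f_n^+(\infty)e^{inx}+\sum_{n\ge 1}f_n^-(\infty)e^{-inx}$ is then an isometry onto a dense subspace of $L^2(S^1)$; since multiplication by $U$ shifts the $f_n^+$ indices up and the $f_n^-$ indices down, $\pi_{\tau_\infty}(U)$ becomes multiplication by $e^{ix}$.

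The only real technical care is in juggling the commutation relation $a(\K)U=Ua(\K+1)$ and the two-sided indexing when evaluating on $E_k$, and in justifying the formal-series description of the completion for $\tau_w$; both are essentially bookkeeping. The $\tau_\infty$ case has the mild subtlety that the state is not faithful, so one must pass through the quotient $\mathcal{A}/N_{\tau_\infty}$ before taking the completion, but this is automatic once the inner-product formula is in hand.
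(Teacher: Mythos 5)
Your proposal is correct and takes essentially the same route as the paper: evaluate $\tau(f^*f)$ on the canonical decomposition $f=\sum_{n\geq 0}U^n f^+_n(\K)+\sum_{n\geq 1}f^-_n(\K)(U^*)^n$, identify the null space, and describe the quotient, its completion, and the left-multiplication representation in each of the three cases. The only notable difference is organizational: you derive the formula \eqref{w_inner_prod} directly via $\tau_w(f^*f)=\sum_k w(k)\|fE_k\|^2$ and sketch the identification of the completion with formal power series, whereas the paper simply cites \cite{CKW,KM1} for the $\tau_w$ case; your added detail (including the re-indexing $j=k-n$ and the treatment of the non-faithful state $\tau_\infty$ through the quotient) is accurate.
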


\begin{proof}The f\/irst Hilbert space is just the completion of $A$ with respect to the inner product given by~\eqref{w_inner_prod}. It was discussed in~\cite{CKW}, and also~\cite{KM1}. It is the natural analog of the classical space of square-integrable functions~$L^2(\D)$ for the quantum disk.

The Hilbert space $H_{\tau_0}$ comes from the state $\tau_0(a) = \langle E_0, aE_0\rangle$. To describe it we f\/irst need to f\/ind all $a\in\mathcal{A}$ such that $\tau_0(a^*a) = 0$. A simple calculation yields
\begin{gather*}
\tau_0(a^*a) = \sum_{n\ge 0} |a_n^+(0)|^2.
\end{gather*}
Thus if $\tau_0(a^*a) = 0$ we get that $a_n^+(0) = 0$ for all $n\in\N$. Let $\mathcal{A}_{\tau_0} = \{a\in\mathcal{A} \colon \tau_0(a^*a) = 0\}$. Then we have
\begin{gather*}
\mathcal{A}/\mathcal{A}_{\tau_0} \cong \bigg\{a = \sum_{n\geq 0}U^n a^+_n(0)P_0\bigg\},
\end{gather*}
and $\|a\|_{\tau_0}^2 = \tau_0(a^*a)$. So, using the canonical basis $\{E_n := U^nP_0\}$ for $n\geq 0$, we can naturally identify $\mathcal{A}/\mathcal{A}_{\tau_0} $ with a dense subspace of $\ell^2(\N)$.

It is easy to describe the representation $\pi_{\tau_0} \colon A \to B(H_{\tau_0})$ of $A$ in the bounded operators on~$H_{\tau_0}$. We have
\begin{gather*}\pi_{\tau_0}(U)E_n = E_{n+1},\end{gather*}
 and
\begin{gather*}
\pi_{\tau_0}(a(\K))E_n = a(\K)U^nP_0 = U^na(\K+n)P_0 = U^na(n)P_0 =a(n)E_n.
\end{gather*}
Notice also that $\mathcal{A}/\mathcal{A}_{\tau_0} \ni [I] \mapsto P_0:=E_0$. In other words, $\pi_{\tau_0}$ is the def\/ining representation of the Toeplitz algebra~$A$.

Next we look at the GNS space associated with $\tau_\infty(a) = \lim\limits_{k\to\infty}\langle E_k , aE_k\rangle$. If $a(\K)\in\mathcal{A}$, we set
\begin{gather*}a_{\infty} = \lim_{k\to\infty} a(k).\end{gather*}
Again we want to f\/ind the subalgebra $\mathcal{A}_{\tau_\infty}$ of $a\in\mathcal{A}$ such that $\tau_\infty(a^*a) = 0$. A~direct computation shows that
\begin{gather*}
\tau_\infty(a^*a) = \sum_{n\ge 0} |a_{n,\infty}^+|^2 + \sum_{n\ge1}|a_{n,\infty}^-|^2,
\end{gather*}
so $\tau_\infty(a^*a) = 0$ if and only if $a_{n,\infty}^\pm = 0$ for all $n$. Now $\mathcal{A}/\mathcal{A}_{\tau_\infty} $ can be identif\/ied with a dense subspace of $L^2(S^1)$ by
\begin{gather*}
\mathcal{A}/\mathcal{A}_{\tau_\infty} \ni [a] = \bigg[a = \sum_{n\geq 0}U^n a^+_{n,\infty}+\sum_{n\geq 1}a^-_{n,\infty}(U^*)^n\bigg] \\
\hphantom{\mathcal{A}/\mathcal{A}_{\tau_\infty} \ni [a]}{} \mapsto\sum_{n\geq 0}a^+_{n,\infty}e^{inx}+\sum_{n\geq 1}a^-_{n,\infty}e^{-inx} := f_a(x).
\end{gather*}
Moreover we have
\begin{gather*}
\tau_\infty([a]) = \frac{1}{2\pi}\int_0^{2\pi}f_a(x) dx.
\end{gather*}

The representation $\pi_{\tau_\infty} \colon A \to B(H_{\tau_\infty})$ is easily seen to be given by
\begin{gather*}\pi_{\tau_\infty}(U)f(x) = e^{ix}f(x),\end{gather*}
and
\begin{gather*}
\pi_{\tau_\infty}(a(\K))f(x) = a_\infty f(x).
\end{gather*}
This completes the proof.
\end{proof}

\section{Implementations of derivations in quantum disk}\label{section6}
Let $H_{\tau}$ be the Hilbert space formed from the GNS construction on $A$ using a $\rho_\theta$-invariant state~$\tau$ and let $\pi_{\tau} \colon A \to B(H_{\tau })$ be the representation of $A$ in the bounded operators on $H_{\tau }$ via left multiplication, that is $\pi_{\tau }(a)f = [af]$. We have that $A\subset H_{\tau }$ is dense in $H_{\tau }$ and $[1]\in H_{\tau }$ is cyclic.

Let $\mathcal{D}_{\tau } = \pi_{\tau }(\mathcal{A})\cdot[1]$. Then $\mathcal{D}_{\tau }$ is dense in $H_{\tau }$. Def\/ine $U_{\tau ,\theta} \colon H_\tau\to H_\tau$ via $U_{\tau ,\theta}[a] = [\rho_\theta(a)]$. Notice for every $\theta$, the operator $U_{\tau ,\theta}$ extends to a unitary operator in $H_{\tau }$. Moreover by direct calculation we get
\begin{gather*}
U_{\tau ,\theta}\pi_\tau(a)U_{\tau ,\theta}^{-1} = \pi_{\tau }(\rho_\theta(a)).
\end{gather*}
It follows from the def\/initions that $U_{\tau ,\theta}(\mathcal{D}_{\tau })\subset\mathcal{D}_{\tau }$ and $\pi_\tau(\mathcal{A})(\mathcal{D}_{\tau })\subset\mathcal{D}_{\tau }$.

\subsection{Invariant derivations}
We f\/irst consider implementations of $\rho_\theta$-invariant derivations. Let $d_\beta$ be an invariant derivation $d_\beta\colon \mathcal{A}\to A$, $d_\beta(a) = [\beta(\K-1), a]$, as described in Proposition \ref{invar_der_rep}.

\begin{defin}
$D_{\tau} \colon \mathcal{D}_\tau \to H_\tau$ is called an {\it implementation} of a $\rho_\theta$-invariant derivation $d_\beta$ if $[D_{\tau}, \pi_\tau(a)] = \pi_\tau(d_\beta(a))$ and $U_{\tau,\theta}D_{\tau} U_{\tau,\theta}^{-1} = D_{\tau}$.
\end{defin}

In view of Theorem \ref{state_decomp} we implement the derivations on the three GNS Hilbert spaces~$H_{\tau_w}$, $H_{\tau_0}$ and~$H_{\tau_\infty}$.

\begin{prop} There exists a function $\alpha(k)$, $\sum\limits_{k\geq 0}|\beta(k-1)-\alpha(k)|^2w(k)<\infty$, such that any implementation $D_{\beta,\tau_w} \colon \mathcal{D}_{\tau_w} \to H_{\tau_w}$ of $d_\beta$ is uniquely represented by
\begin{gather}\label{InvImp}
D_{\beta,\tau_w}a = \beta(\K-1)a-a\alpha(\K).
\end{gather}
\end{prop}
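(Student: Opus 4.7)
The plan is to exploit the cyclicity of the vector $[1]\in H_{\tau_w}$: since $\mathcal{D}_{\tau_w} = \pi_{\tau_w}(\mathcal{A})\cdot[1]$, any implementation $D_{\beta,\tau_w}$ is completely determined on its domain by the single vector $\xi := D_{\beta,\tau_w}[1]$. Indeed, applying the commutator relation $[D_{\beta,\tau_w},\pi_{\tau_w}(a)] = \pi_{\tau_w}(d_\beta(a))$ to $[1]$ yields
\begin{gather*}
D_{\beta,\tau_w}[a] = \pi_{\tau_w}(a)\xi + [d_\beta(a)],
\end{gather*}
so the problem reduces to identifying which vectors $\xi$ can arise.

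To pin down $\xi$ I would apply the covariance condition $U_{\tau_w,\theta} D_{\beta,\tau_w} U_{\tau_w,\theta}^{-1} = D_{\beta,\tau_w}$ to $[1]$, which is fixed by $U_{\tau_w,\theta}$ because $\rho_\theta(I)=I$. This forces $U_{\tau_w,\theta}\xi = \xi$ for every $\theta$. The power-series description of $H_{\tau_w}$ in Proposition~\ref{GNS_prop} together with the weighted inner product~(\ref{w_inner_prod}) shows that $U_{\tau_w,\theta}$ acts as multiplication by $e^{in\theta}$ on $[U^n f_n^+(\K)]$ and by $e^{-in\theta}$ on $[f_n^-(\K)(U^*)^n]$, and that the decomposition is orthogonal. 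Consequently the invariant subspace consists exactly of the $n=0$ vectors, i.e., $\xi = [\eta(\K)]$ with $\sum_{k\ge 0}w(k)|\eta(k)|^2 < \infty$.

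Finally, setting $\alpha(k) := \beta(k-1) - \eta(k)$ translates the summability condition into $\sum_{k\ge 0}|\beta(k-1) - \alpha(k)|^2 w(k) < \infty$, and substituting $\eta(\K) = \beta(\K-1) - \alpha(\K)$ into the displayed formula together with $d_\beta(a) = \beta(\K-1)a - a\beta(\K-1)$ collapses everything to formula~(\ref{InvImp}). Uniqueness of $\alpha$ is immediate: $\xi$ is uniquely determined by the implementation, and since every $w(k)\ne 0$ the equality $[\eta(\K)] = [\eta'(\K)]$ in $H_{\tau_w}$ forces $\eta=\eta'$ pointwise, hence $\alpha=\alpha'$. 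The one delicate ingredient is the identification of the $U_{\tau_w,\theta}$-invariant subspace of $H_{\tau_w}$ with the weighted $\ell^2$-space of diagonal sequences; this requires that the power-series expansion in Proposition~\ref{GNS_prop} is genuinely an orthogonal decomposition into spectral subspaces of the $S^1$-action, which is visible directly from~(\ref{w_inner_prod}). Once this is in hand, the remainder is purely algebraic bookkeeping.
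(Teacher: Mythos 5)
Your proposal is correct and follows essentially the same route as the paper: determine $D_{\beta,\tau_w}[1]$, use the rotation invariance $U_{\tau_w,\theta}$ to conclude it is a diagonal vector $\eta(\K)\in H_{\tau_w}$, and then set $\alpha(k)=\beta(k-1)-\eta(k)$ so that the commutator relation yields the formula and the finiteness of $\|\eta(\K)\|_{\tau_w}^2$ gives the stated summability condition. The only difference is cosmetic: you spell out the orthogonal Fourier decomposition of $H_{\tau_w}$ and the uniqueness of $\alpha$ (via $w(k)\neq 0$) a bit more explicitly, while the paper also notes the easy converse that the resulting operator is indeed an implementation.
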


\begin{proof}
We start by computing $U_{\tau_w,\theta}$. From the def\/initions we have
\begin{gather*}U_{\tau_w,\theta}(a) = \sum_{n\geq 0}U^n e^{in\theta}a^+_n(\K)+\sum_{n\geq 1}e^{-in\theta}a^-_n(\K)(U^*)^n.\end{gather*}

It follows from the assumptions that $D_{\beta,\tau_w}(I)$ must be invariant with respect to $U_{\tau_w,\theta}$. This implies that $D_{\beta,\tau_w}(I) = \eta(\K)$ for some diagonal operator $\eta(\K)\in H_{\tau_w}$. Thus, using Proposition~\ref{invar_der_rep}, we get
\begin{gather*}
D_{\beta,\tau_w}a = D_{\beta,\tau_w}\pi_{\tau_w}(a)\cdot I = [D_{\beta,\tau_w}, \pi_{\tau_w}(a)]\cdot I + \pi_{\tau_w}(a)D_{\beta,\tau_w}(1) = d_\beta(a) + a\eta(\K) \\
\hphantom{D_{\beta,\tau_w}a}{} = [\beta(\K-1),a] + a\eta(\K) = \beta(\K-1)a-a\alpha(\K),
\end{gather*}
where $\alpha(k)=\beta(k-1)-\eta(k)$. Notice also that $\eta(\K)\in H_{\tau_w}$ implies
\begin{gather*}||\eta(\K)||^2_{\tau_w}=\sum_{k\geq 0}|\beta(k-1)-\alpha(k)|^2w(k)<\infty.\end{gather*}
Conversely, it is easy to see that the operator def\/ined by~(\ref{InvImp}) is an implementation of $d_\beta$. Thus the result follows.
\end{proof}

\begin{prop}
There exists a number $c$ such that any implementation $D_{\beta,\tau_0} \colon \mathcal{D}_{\tau_0} \to \ell^2(\N )$ is of the form
\begin{gather*}
D_{\beta,\tau_0} = c\cdot I + \beta(\K-1),
\end{gather*}
where $\beta(k)$ is the convergent increment function from Proposition~{\rm \ref{invar_der_rep}}.
\end{prop}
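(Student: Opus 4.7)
The plan is to mimic the structure of the proof for $\tau_w$ but exploit the fact that the GNS space $H_{\tau_0}\cong\ell^2(\N)$ is much smaller: the cyclic vector is $[1]=P_0=E_0$, so specifying $D_{\beta,\tau_0}(E_0)$ together with the commutator relation pins down the operator on all of $\mathcal{D}_{\tau_0}=\pi_{\tau_0}(\mathcal{A})\cdot E_0$.

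First I would identify the action of $U_{\tau_0,\theta}$ on $\ell^2(\N)$. Using the defining formula $U_{\tau_0,\theta}[a]=[\rho_\theta(a)]$ on the orthonormal basis $E_n=U^nP_0$, and the fact that $\rho_\theta(U^nP_0)=e^{in\theta}U^nP_0$, one gets $U_{\tau_0,\theta}E_n=e^{in\theta}E_n$. Consequently, the subspace of $U_{\tau_0,\theta}$-invariant vectors (for all $\theta$) is exactly $\mathbb{C}E_0$. Since by definition of an implementation $U_{\tau_0,\theta}D_{\beta,\tau_0}U_{\tau_0,\theta}^{-1}=D_{\beta,\tau_0}$, applying both sides to the invariant vector $E_0$ shows that $D_{\beta,\tau_0}(E_0)$ is itself $U_{\tau_0,\theta}$-invariant. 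Hence there exists $c\in\mathbb{C}$ with $D_{\beta,\tau_0}(E_0)=c\,E_0$.

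Next I would use cyclicity to extend this to all of $\mathcal{D}_{\tau_0}$. For $a\in\mathcal{A}$, expand
\begin{gather*}
D_{\beta,\tau_0}(\pi_{\tau_0}(a)E_0)=[D_{\beta,\tau_0},\pi_{\tau_0}(a)]E_0+\pi_{\tau_0}(a)D_{\beta,\tau_0}(E_0)=\pi_{\tau_0}(d_\beta(a))E_0+c\,\pi_{\tau_0}(a)E_0.
\end{gather*}
Since $\pi_{\tau_0}$ is the defining representation on $\ell^2(\N)$, this equals $[\beta(\K-1),a]E_0+c\,aE_0=\beta(\K-1)aE_0-a\beta(\K-1)E_0+c\,aE_0$. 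Now invoke the normalization $\beta(-1)=0$ from Proposition~\ref{invar_der_rep}, which gives $\beta(\K-1)E_0=\beta(-1)E_0=0$. Thus $D_{\beta,\tau_0}(\pi_{\tau_0}(a)E_0)=(c\cdot I+\beta(\K-1))\pi_{\tau_0}(a)E_0$, which is the asserted formula on the dense domain $\mathcal{D}_{\tau_0}$.

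Finally I would verify the converse, namely that for any $c\in\mathbb{C}$ the operator $c\cdot I+\beta(\K-1)$ is a genuine implementation: the commutator relation reduces to $[\beta(\K-1),\pi_{\tau_0}(a)]=\pi_{\tau_0}(d_\beta(a))$, which is just the defining formula of $d_\beta$, while the $U_{\tau_0,\theta}$-covariance is immediate because both $I$ and $\beta(\K-1)$ are diagonal in the $\{E_n\}$ basis and so commute with $U_{\tau_0,\theta}$. The only delicate point — really the only place where something could go wrong — is the use of $\beta(-1)=0$; without this normalization the candidate operator would differ from the asserted one by an additive constant that would get absorbed into $c$, so the statement as phrased genuinely relies on the convention fixed in Proposition~\ref{invar_der_rep}.
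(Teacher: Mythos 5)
Your proof is correct and follows essentially the same route as the paper: identify $U_{\tau_0,\theta}E_n=e^{in\theta}E_n$, use the invariance condition to force $D_{\beta,\tau_0}E_0=cE_0$, then propagate via the commutator relation and cyclicity (the paper does this with $a=U^n$ on the basis vectors, you do it for general $\pi_{\tau_0}(a)E_0$, using $\beta(-1)=0$ exactly as the paper does implicitly), and finally check the converse. No gaps worth flagging.
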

\begin{proof}Again we need to f\/ind $U_{\tau_0,\theta}$. Since $\rho_\theta(U^nP_0)=e^{in\theta}U^nP_0$, we have
\begin{gather*}U_{\tau_0,\theta}E_n=e^{in\theta}E_n.\end{gather*}

Since $D_{\beta,\tau_0}E_0$ is invariant with respect to $U_{\tau_0,\theta}$, we must have $D_{\beta,\tau_0}E_0 = cE_0$ for some constant~$c$. Then
\begin{gather*}
D_{\beta,\tau_0}E_n = D_{\beta,\tau_0}U^nE_0 = (D_{\beta,\tau_0}U^n - U^nD_{\beta,\tau_0})E_0 + U^nD_{\beta,\tau_0}E_0 = d_\beta(U^n)E_0 + cU^nE_0.
\end{gather*}
By using Proposition~\ref{invar_der_rep} in the above equation we get
\begin{gather*}
D_{\beta,\tau_0}E_n = [\beta(\K-1), U^n]E_0 + cE_n = (\beta(\K-1)-\beta(\K-n-1))E_n + cE_n\\
\hphantom{D_{\beta,\tau_0}E_n}{} = (\beta(n-1) + c)E_n.
\end{gather*}
A short calculation verif\/ies that $D_{\beta,\tau_0}$ is indeed an implementation of~$d_\beta$. This completes the proof.
\end{proof}

\begin{prop}There exists a number $c$ such that the implementations $D_{\beta,\tau_\infty} \colon \mathcal{D}_{\tau_\infty} \to L^2(S^1)$ of $d_\beta$ are of the form
\begin{gather*}
D_{\beta,\tau_\infty} = \beta_\infty\frac{1}{i}\frac{d}{dx} + c,
\end{gather*}
where
\begin{gather*}\beta_\infty:=\lim_{k\to\infty}\left(\beta(k)-\beta(k-1)\right).\end{gather*}
\end{prop}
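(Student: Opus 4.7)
The plan is to mirror the argument used in the two preceding propositions. The first step is to identify explicitly the unitary $U_{\tau_\infty,\theta}$ on $L^2(S^1)$. Since $\pi_{\tau_\infty}(U)$ is multiplication by $e^{ix}$ and $U_{\tau_\infty,\theta}\pi_{\tau_\infty}(\cdot)U_{\tau_\infty,\theta}^{-1}=\pi_{\tau_\infty}(\rho_\theta(\cdot))$, a direct check shows $(U_{\tau_\infty,\theta}f)(x)=f(x+\theta)$. Consequently the $\rho_\theta$-invariant vectors in $L^2(S^1)$ are precisely the constant functions.

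Since $D_{\beta,\tau_\infty}$ is required to commute with every $U_{\tau_\infty,\theta}$, the vector $D_{\beta,\tau_\infty}[1]$ must be invariant, hence equal to a constant $c$. Because $[1]$ is cyclic for $\pi_{\tau_\infty}(\mathcal{A})$, the operator is then determined on $\mathcal{D}_{\tau_\infty}$ by
\begin{equation*}
D_{\beta,\tau_\infty}\pi_{\tau_\infty}(a)[1]=\pi_{\tau_\infty}(d_\beta(a))[1]+c\,\pi_{\tau_\infty}(a)[1],\qquad a\in\mathcal{A},
\end{equation*}
so it suffices to verify the claimed formula on the generators $a=U^n$ and $a=(U^*)^n$.

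For $a=U^n$ with $n\ge 1$, the commutation relation (\ref{CommRel}) iterated gives $[\beta(\K-1),U^n]=U^n\bigl(\beta(\K+n-1)-\beta(\K-1)\bigr)$, and telescoping
\begin{equation*}
\beta(\K+n-1)-\beta(\K-1)=\sum_{j=0}^{n-1}\bigl(\beta(\K+j)-\beta(\K+j-1)\bigr)
\end{equation*}
exhibits this as a \emph{bounded} diagonal operator (since $\beta\in c_{\rm inc}$) whose limit at infinity is $n\beta_\infty$. Because $\pi_{\tau_\infty}$ sends any diagonal $a(\K)\in A_{\rm diag}$ to multiplication by $a_\infty$, evaluating the above on $[1]$ yields $D_{\beta,\tau_\infty}(e^{inx})=(n\beta_\infty+c)e^{inx}$. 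An analogous computation using $U^*a(\K)=a(\K+1)U^*$ for $a=(U^*)^n$ gives $D_{\beta,\tau_\infty}(e^{-inx})=(-n\beta_\infty+c)e^{-inx}$. Both expressions match $\bigl(\beta_\infty\tfrac{1}{i}\tfrac{d}{dx}+c\bigr)e^{\pm inx}$, and a short direct check confirms that this formula genuinely implements $d_\beta$.

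The only conceptually delicate point is that $\beta(\K-1)$ itself need not be a bounded operator, so one cannot simply push it through $\pi_{\tau_\infty}$ term by term; the argument rests on the fact that only the \emph{difference} $\beta(\K+n-1)-\beta(\K-1)$ appears, which belongs to $A_{\rm diag}$ by the convergent-increment hypothesis. Beyond that, everything is a routine consequence of the cyclicity of $[1]$ and the explicit description of $\pi_{\tau_\infty}$ from Proposition~\ref{GNS_prop}.
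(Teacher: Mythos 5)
Your proposal is correct and follows essentially the same route as the paper: determine $U_{\tau_\infty,\theta}$ (the paper writes $f(x-\theta)$, apparently a harmless sign slip, while your $f(x+\theta)$ is consistent with $\rho_\theta(U)=e^{i\theta}U$; only the fact that the invariant vectors are constants is used), conclude $D_{\beta,\tau_\infty}[1]=c$, and then evaluate on the exponentials via $\pi_{\tau_\infty}(d_\beta(U^n))$. Your treatment is in fact slightly more careful than the paper's, spelling out the negative-frequency case and noting that only the bounded increment $\beta(\K+n-1)-\beta(\K-1)\in A_{\rm diag}$ ever enters, but it is the same argument.
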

\begin{proof}
Like in the other proofs we need to understand what the value of $D_{\beta,\tau_\infty}$ on~$1$ is. A~simple calculation shows that
\begin{gather*}(U_{\tau_\infty,\theta} f)(x) = f(x - \theta).\end{gather*}
It is clear by the invariance properties that there exists a constant~$c$ such that $D_{\beta,\tau_\infty}(1) = c\cdot 1$.

Notice that $\mathcal{D}_{\tau_\infty}$ is the space of trigonometric polynomials on~$S^1$. By linearity we only need to look at $D_{\beta,\tau_\infty}$ on $e^{inx}$. We have
\begin{gather*}
D_{\beta,\tau_\infty}\big(e^{inx}\big) = \big(D_{\beta,\tau_\infty}\pi_{\tau_\infty}(U^n) - \pi_{\tau_\infty}(U^n)D_{\beta,\tau_\infty}\big)\cdot 1 + \pi_{\tau_\infty}(U^n)D_{\beta,\tau_\infty}(1) \\
\hphantom{D_{\beta,\tau_\infty}\big(e^{inx}\big)}{} = \big[D_{\beta,\tau_\infty}, \pi_{\tau_\infty}(U^n)\big] + \pi_{\tau_\infty}(U^n)D_{\beta,\tau_\infty}(1) = \pi_{\tau_\infty}(d_\beta(U^n)) + \pi_{\tau_\infty}(U^n)D_{\beta,\tau_\infty}(1) \\
\hphantom{D_{\beta,\tau_\infty}\big(e^{inx}\big)}{}
= \pi_{\tau_\infty}(U^n)\cdot\lim_{k\to\infty} (\beta(k+n) - \beta(k) ) + \pi_{\tau_\infty}(U^n)(1)c \\
\hphantom{D_{\beta,\tau_\infty}\big(e^{inx}\big)}{}
= e^{inx} (n\beta_\infty + c ) = \beta_\infty\frac{1}{i}\frac{d}{dx}\big(e^{inx}\big) + ce^{inx}.
\end{gather*}
It is again easy to verify that $D_{\beta,\tau_\infty}$ is an implementation. This completes the proof.
\end{proof}

\subsection{Covariant derivations}

Now let $\tilde d_\beta$ be a covariant derivation $\tilde d_\beta\colon \mathcal{A}\to A$ of the form $\tilde d_\beta(a) = [U\beta(\K), a]$, as proved in Proposition~\ref{covar_der_rep}. Let $\tau$ be a $\rho_\theta$-invariant state.

\begin{defin}
$\tilde D_{\tau} \colon \mathcal{D}_\tau \to H_\tau$ is called an {\it implementation} of a $\rho_\theta$-covariant derivation $\tilde d_\beta$ if $[\tilde D_{\tau}, \pi_\tau(a)] = \pi_\tau(\tilde d_\beta(a))$ and $U_{\tau,\theta}\tilde D_{\tau} U_{\tau,\theta}^{-1} = e^{i\theta}\tilde D_{\tau}$.
\end{defin}

We state without proofs the analogs of the above implementation results for covariant derivations; the verif\/ications are simple modif\/ications of the arguments for invariant derivations.

\begin{prop} There exists a function $\alpha(k)$, $\sum\limits_{k\geq 0}|\beta(k)-\alpha(k)|^2w(k)<\infty$, such that any implementation $\tilde D_{\beta,\tau_w} \colon \mathcal{D}_{\tau_w} \to H_{\tau_w}$ of $\tilde d_\beta$ is uniquely represented by
\begin{gather*}
\tilde D_{\beta,\tau_w}f = U\beta(\K)f - fU\alpha(\K).
\end{gather*}
\end{prop}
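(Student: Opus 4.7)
The plan is to mimic the argument for the invariant implementation on $H_{\tau_w}$, the only substantive change being that $\tilde D_{\beta,\tau_w}(I)$ is forced into a weight-one eigenspace of $U_{\tau_w,\theta}$ rather than into the invariant subspace. As in that proposition, I first record that under the power-series identification of $H_{\tau_w}$, the unitary $U_{\tau_w,\theta}$ multiplies the $U^n$ coefficient by $e^{in\theta}$ and the $(U^*)^n$ coefficient by $e^{-in\theta}$.

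Applying the covariance relation $U_{\tau_w,\theta}\tilde D_{\beta,\tau_w}U_{\tau_w,\theta}^{-1} = e^{i\theta}\tilde D_{\beta,\tau_w}$ to the $U_{\tau_w,\theta}$-invariant vector $I \in \mathcal{D}_{\tau_w}$ yields $U_{\tau_w,\theta}\tilde D_{\beta,\tau_w}(I) = e^{i\theta}\tilde D_{\beta,\tau_w}(I)$ for every $\theta$. The spectral description of $U_{\tau_w,\theta}$ above then forces $\tilde D_{\beta,\tau_w}(I) = U\eta(\K)$ for some diagonal $\eta(\K)$, with the sole constraint that $U\eta(\K) \in H_{\tau_w}$. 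Applying \eqref{w_inner_prod} to this single-term series, that constraint reads $\sum_{k\geq 0}w(k)|\eta(k)|^2 < \infty$.

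Since $\mathcal{D}_{\tau_w} = \pi_{\tau_w}(\mathcal{A})\cdot I$, the full operator is now determined by the Leibniz-type identity: for $a \in \mathcal{A}$,
\begin{gather*}
\tilde D_{\beta,\tau_w}(a) = [\tilde D_{\beta,\tau_w}, \pi_{\tau_w}(a)]\cdot I + \pi_{\tau_w}(a)\tilde D_{\beta,\tau_w}(I) = [U\beta(\K), a] + aU\eta(\K) = U\beta(\K)a - aU\alpha(\K),
\end{gather*}
where $\alpha(k) := \beta(k) - \eta(k)$; uniqueness of the representation (for fixed $\beta$) follows because $\eta$, and hence $\alpha$, is uniquely determined by $\tilde D_{\beta,\tau_w}(I)$. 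The summability bound on $\eta$ transcribes directly into $\sum_{k\geq 0}w(k)|\beta(k) - \alpha(k)|^2 < \infty$, and conversely any $\alpha$ obeying this bound gives an implementation via the displayed formula by direct check of the commutator and covariance relations. The argument is a routine covariant replica of the invariant case, so no genuine obstacle is expected; the only point requiring care is the correct identification of the $e^{i\theta}$-eigenspace of $U_{\tau_w,\theta}$ as exactly the subspace of power series supported on the $U^1$ coefficient.
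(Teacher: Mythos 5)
Your argument is correct and is exactly the ``simple modification of the invariant case'' that the paper invokes without writing out: covariance pins $\tilde D_{\beta,\tau_w}(I)$ to the $e^{i\theta}$-eigenspace of $U_{\tau_w,\theta}$, i.e.\ to a vector of the form $U\eta(\K)$, and the Leibniz identity plus the norm formula \eqref{w_inner_prod} give the stated form and the summability condition with $\alpha(k)=\beta(k)-\eta(k)$. Nothing further is needed.
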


\begin{prop}The implementation $\tilde D_{\beta,\tau_0} \colon \mathcal{D}_{\tau_0} \to \ell^2(\N )$ of $\tilde d_\beta$ is of the form
\begin{gather*}
\tilde D_{\beta,\tau_0} = U\beta(\K),
\end{gather*}
i.e., on basis elements $\tilde D_{\beta,\tau_0}E_n = \beta(n)E_{n+1}$.
\end{prop}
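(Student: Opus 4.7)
Proof plan. The strategy parallels the invariant case but exploits the fact that covariance interacts nontrivially with commutation with diagonal operators. The first step is to use the covariance property $U_{\tau_0,\theta}\tilde D_{\beta,\tau_0} U_{\tau_0,\theta}^{-1}=e^{i\theta}\tilde D_{\beta,\tau_0}$ together with the decomposition $U_{\tau_0,\theta}E_n=e^{in\theta}E_n$ established in the previous proposition. Expanding $\tilde D_{\beta,\tau_0}E_n=\sum_{k}\lambda_{k,n}E_k$ and applying the covariance relation forces $\lambda_{k,n}(e^{i(k-n)\theta}-e^{i\theta})=0$ for every $\theta$, so only $k=n+1$ survives. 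Hence there exist scalars $\lambda_n$ with
\begin{gather*}
\tilde D_{\beta,\tau_0}E_n = \lambda_n E_{n+1}.
\end{gather*}

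The second step is to pin down the $\lambda_n$ using the commutator identity $[\tilde D_{\beta,\tau_0},\pi_{\tau_0}(a)]=\pi_{\tau_0}(\tilde d_\beta(a))$ for a suitable diagonal element of $\mathcal{A}$. The natural choice is $a=P_n$, the rank-one projection onto the span of $E_n$, which lies in $\mathcal{A}_{\rm diag}$ (its symbol is eventually zero). Applying the left-hand side to $E_n$ gives $\lambda_n E_{n+1}$, since $P_n E_{n+1}=0$. For the right-hand side one computes $\tilde d_\beta(P_n)=[U\beta(\K),P_n]$ using the commutation relation $\beta(\K)U=U\beta(\K+1)$; evaluating at $E_n$ yields $\beta(n)E_{n+1}$. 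Equating gives $\lambda_n=\beta(n)$, which is the claimed formula. This is the step where covariance produces a genuinely stronger rigidity than in the invariant case: there the derivation $d_\beta$ vanishes on $\mathcal{A}_{\rm diag}$, so the analogous commutation test is trivial and leaves the free constant $c$; here $\tilde d_\beta$ does not vanish on diagonals, and that absence of a kernel eliminates any free parameter.

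The final step is a quick verification that the operator $\tilde D_{\beta,\tau_0}=U\beta(\K)$ actually satisfies both implementation requirements. The covariance $U_{\tau_0,\theta}\tilde D_{\beta,\tau_0}U_{\tau_0,\theta}^{-1}=e^{i\theta}\tilde D_{\beta,\tau_0}$ is immediate from $U_{\tau_0,\theta}E_n=e^{in\theta}E_n$ and the fact that $\tilde D_{\beta,\tau_0}$ shifts the grading by one. The commutator identity $[\tilde D_{\beta,\tau_0},\pi_{\tau_0}(a)]=\pi_{\tau_0}(\tilde d_\beta(a))$ for $a\in\mathcal{A}$ follows by linearity and the Leibniz rule once it is checked on the generators $U$, $U^*$, and on diagonal generators, each a short calculation using $\beta(\K)U=U\beta(\K+1)$.

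The hard part, such as it is, is not any one computation but noticing where the rigidity comes from: recognizing that testing the commutator on diagonal projections $P_n$ immediately determines $\lambda_n$, in contrast with the invariant case. Once this is observed, the proof is a direct parallel to the preceding propositions.
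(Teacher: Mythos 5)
Your argument is correct, and it fills in a proof the paper actually omits: for the covariant implementations the authors only remark that "the verifications are simple modifications of the arguments for invariant derivations." The template suggested by that remark would be the one used for $D_{\beta,\tau_0}$: covariance forces $\tilde D_{\beta,\tau_0}E_0=cE_1$, and propagation via $[\tilde D_{\beta,\tau_0},\pi_{\tau_0}(U^n)]=\pi_{\tau_0}(\tilde d_\beta(U^n))$ gives $\tilde D_{\beta,\tau_0}E_n=(\beta(n)-\beta(0)+c)E_{n+1}$; one must then still kill the constant, e.g.\ by testing the commutator identity on $U^*$ at $E_0$ (using $U^*E_0=0$) or, as you do, on diagonal elements. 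Your route is a mild but clean variant: covariance applied to every $E_n$ pins the matrix structure $\tilde D_{\beta,\tau_0}E_n=\lambda_nE_{n+1}$, and testing the commutator on the projections $P_n\in\mathcal{A}_{\rm diag}$ gives $\lambda_n=\beta(n)$ directly, with no intermediate free parameter. You also correctly isolate the one point that is genuinely different from the invariant case and explains why the statement here has no constant $c$: $d_\beta$ vanishes on $\mathcal{A}_{\rm diag}$ while $\tilde d_\beta$ does not, so the commutator condition on diagonals (or on $U^*$ at $E_0$) removes the freedom that survives in the invariant implementations. The concluding verification that $U\beta(\K)$ is an implementation is immediate since $\pi_{\tau_0}$ is the defining representation, so $[U\beta(\K),\pi_{\tau_0}(a)]=\pi_{\tau_0}([U\beta(\K),a])$ on $\mathcal{D}_{\tau_0}$; your generator-by-generator check is fine but not even needed.
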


\begin{prop}
There exists a number $c$ such that any implementation $\tilde D_{\beta,\tau_\infty} \colon \mathcal{D}_{\tau_\infty} \to L^2(S^1)$ of $\tilde d_\beta$ is of the form
\begin{gather*}
\tilde D_{\beta,\tau_\infty} = e^{ix}\left(\beta_\infty\frac{1}{i}\frac{d}{dx} + c\right),
\end{gather*}
where, as before,
$\beta_\infty:=\lim\limits_{k\to\infty} (\beta(k)-\beta(k-1) ).$
\end{prop}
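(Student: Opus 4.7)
The plan mirrors the invariant case but with the covariance condition $U_{\tau_\infty,\theta}\tilde D_{\beta,\tau_\infty} U_{\tau_\infty,\theta}^{-1} = e^{i\theta}\tilde D_{\beta,\tau_\infty}$ replacing invariance. First I would determine $\tilde D_{\beta,\tau_\infty}(1)$. Since $(U_{\tau_\infty,\theta}f)(x)=f(x-\theta)$ and $1$ is fixed by $U_{\tau_\infty,\theta}$, the covariance relation applied to the cyclic vector $1$ forces $(\tilde D_{\beta,\tau_\infty}(1))(x-\theta)=e^{i\theta}(\tilde D_{\beta,\tau_\infty}(1))(x)$ for all $\theta$. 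This functional equation has general solution $ce^{ix}$, which identifies a unique constant $c\in\C$.

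Next, since $\mathcal{D}_{\tau_\infty}$ is the span of $\{e^{inx}\}_{n\in\Z}$, it suffices to compute $\tilde D_{\beta,\tau_\infty}$ on these monomials using
\begin{gather*}
\tilde D_{\beta,\tau_\infty}\pi_{\tau_\infty}(a)\cdot 1=\pi_{\tau_\infty}(\tilde d_\beta(a))\cdot 1+\pi_{\tau_\infty}(a)\,\tilde D_{\beta,\tau_\infty}(1)
\end{gather*}
with $a=U^n$ or $a=(U^*)^n$. For $a=U^n$ the commutation relation~(\ref{CommRel}) gives $[U\beta(\K),U^n]=U^{n+1}\bigl(\beta(\K+n)-\beta(\K)\bigr)$. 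Applying $\pi_{\tau_\infty}$, which sends $U$ to multiplication by $e^{ix}$ and any diagonal operator $b(\K)$ to the constant $b_\infty=\lim_{k\to\infty}b(k)$, yields $\pi_{\tau_\infty}([U\beta(\K),U^n])=e^{i(n+1)x}\lim_{k\to\infty}(\beta(k+n)-\beta(k))$. The limit telescopes as $\sum_{j=1}^{n}(\beta(k+j)-\beta(k+j-1))\to n\beta_\infty$, so $\tilde D_{\beta,\tau_\infty}(e^{inx})=e^{i(n+1)x}(n\beta_\infty+c)$, which equals $e^{ix}\bigl(\beta_\infty\tfrac{1}{i}\tfrac{d}{dx}+c\bigr)(e^{inx})$.

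For $a=(U^*)^n$ with $n\ge 1$ a parallel calculation using $\beta(\K)(U^*)^n=(U^*)^n\beta(\K-n)$ and $(U^*)^nU=(U^*)^{n-1}$ gives
\begin{gather*}
[U\beta(\K),(U^*)^n]=U(U^*)^n\beta(\K-n)-(U^*)^{n-1}\beta(\K).
\end{gather*}
Here $U(U^*)^n=(I-P_0)(U^*)^{n-1}$ differs from $(U^*)^{n-1}$ by a finite-rank (hence compact) operator which is killed by $\pi_{\tau_\infty}$ since $\pi_{\tau_\infty}$ factors through $A/\mathcal{K}\cong C(S^1)$. Thus $\pi_{\tau_\infty}([U\beta(\K),(U^*)^n])=e^{-i(n-1)x}\lim_{k\to\infty}(\beta(k-n)-\beta(k))=-n\beta_\infty e^{-i(n-1)x}$ by the same telescoping argument, and the formula $\tilde D_{\beta,\tau_\infty}(e^{-inx})=e^{i(1-n)x}(-n\beta_\infty+c)$ again matches the proposed expression.

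The main obstacle I anticipate is managing the asymmetry between $U$ and $U^*$ on the negative-frequency side: the identity $(U^*)^nU=(U^*)^{n-1}$ produces a rank-one defect $P_0(U^*)^{n-1}$ that does not appear in the $U^n$ computation. The key observation making the proof go through is that $\pi_{\tau_\infty}$ annihilates the compact ideal, so these boundary corrections disappear and only the asymptotic increment $\beta_\infty$ of $\beta$ survives. Finally, a direct verification that the operator $e^{ix}\bigl(\beta_\infty\tfrac{1}{i}\tfrac{d}{dx}+c\bigr)$ satisfies both the derivation-commutator identity and the covariance relation establishes that any such operator is indeed an implementation, completing the characterization.
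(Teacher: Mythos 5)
Your proposal is correct and is essentially the paper's intended argument: the paper states this proposition without proof as a ``simple modification'' of the invariant case, and your proof is exactly that modification (determine $\tilde D_{\beta,\tau_\infty}(1)$ from covariance, then propagate to $e^{\pm inx}$ via the commutator identity, with the rank-one defect $P_0(U^*)^{n-1}$ killed because $\pi_{\tau_\infty}$ annihilates $\mathcal K$), and all the computations check out. One small sign point: as literally written, the functional equation $g(x-\theta)=e^{i\theta}g(x)$ has solution $ce^{-ix}$, not $ce^{ix}$; under the paper's identification the unitary actually acts by $e^{inx}\mapsto e^{in\theta}e^{inx}$, i.e.\ $(U_{\tau_\infty,\theta}f)(x)=f(x+\theta)$ (the paper's $f(x-\theta)$ is a sign typo), which is what yields $\tilde D_{\beta,\tau_\infty}(1)=ce^{ix}$ and makes your subsequent computation, and the final covariance check, consistent.
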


\section{Compactness of parametrices}\label{section7}
\subsection{Spectral triples}

We say that a closed operator $D$ has compact parametrices if the operators $(I+D^*D)^{-1/2}$ and $(I+DD^*)^{-1/2}$ are compact. Other equivalent formulations are summarized in the appendix. Below we will reuse the same notation for the {\it closure} of the operators constructed in the previous section. In most cases it is very straightforward to establish when those operators have compact parametrices.

\begin{prop}The operators $D_{\beta,\tau_0}$, $\tilde D_{\beta,\tau_0}$ have compact parametrices if and only if \smash{$\beta(k)\!\to\!\infty$} as $k\to\infty$.
\end{prop}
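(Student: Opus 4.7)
The key observation is that in the basis $\{E_n\}$ of $\ell^2(\N)$ coming from the representation $\pi_{\tau_0}$, both operators either are diagonal or act as a weighted shift whose $D^*D$ and $DD^*$ are diagonal. So the whole proposition reduces to the standard fact that a diagonal operator on a separable Hilbert space (with respect to an orthonormal basis) is compact if and only if its sequence of diagonal entries converges to zero.

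First I would treat $D_{\beta,\tau_0}$. From the preceding proposition, $D_{\beta,\tau_0}E_n=(\beta(n-1)+c)E_n$, so the closure of $D_{\beta,\tau_0}$ is the normal diagonal operator on $\ell^2(\N)$ with these eigenvalues, defined on the maximal natural domain $\{\sum f_n E_n : \sum |(\beta(n-1)+c)f_n|^2<\infty\}$. Consequently $D^*D=DD^*$ is the diagonal operator with entries $|\beta(n-1)+c|^2$, hence $(I+D^*D)^{-1/2}$ is diagonal with entries $(1+|\beta(n-1)+c|^2)^{-1/2}$. This is compact exactly when $|\beta(n-1)+c|\to\infty$, and since $c$ is a finite constant this is equivalent to $\beta(k)\to\infty$ (i.e., $|\beta(k)|\to\infty$) as $k\to\infty$.

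Next I would handle the weighted shift $\tilde D_{\beta,\tau_0}$, where $\tilde D_{\beta,\tau_0}E_n=\beta(n)E_{n+1}$. A direct calculation gives $\tilde D_{\beta,\tau_0}^*E_n=\overline{\beta(n-1)}\,E_{n-1}$ for $n\ge 1$ and $\tilde D_{\beta,\tau_0}^*E_0=0$. Therefore both $\tilde D^*\tilde D$ and $\tilde D\tilde D^*$ are diagonal: $\tilde D^*\tilde D\, E_n=|\beta(n)|^2E_n$ and $\tilde D\tilde D^*E_n=|\beta(n-1)|^2E_n$ for $n\ge 1$, with $\tilde D\tilde D^*E_0=0$. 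Applying the diagonal compactness criterion to $(I+\tilde D^*\tilde D)^{-1/2}$ and $(I+\tilde D\tilde D^*)^{-1/2}$ shows that both parametrices are compact if and only if $|\beta(k)|\to\infty$.

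There is no real obstacle here: the operators are explicitly diagonalizable (or reduce to that after one multiplication), and the characterization reduces to the elementary fact about diagonal compact operators recalled in the appendix. The only points requiring a line of care are that the statement ``$\beta(k)\to\infty$'' is being read as $|\beta(k)|\to\infty$ since $\beta$ is complex-valued, and that the formally defined operators have closures equal to the natural maximal diagonal/weighted-shift extensions used above.
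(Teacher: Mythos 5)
Your proof is correct and follows essentially the same route as the paper: both arguments diagonalize $D_{\beta,\tau_0}$ with eigenvalues $\beta(k-1)+c$ and observe that $\tilde D_{\beta,\tau_0}$, being that diagonal operator composed with the isometric shift, yields the same compactness criterion; you merely spell out the computation of $\tilde D^*\tilde D$ and $\tilde D\tilde D^*$ that the paper compresses into one sentence.
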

\begin{proof} The operators $D_{\beta,\tau_0}$ are diagonal with eigenvalues $\beta(k-1)+c$, which must go to inf\/inity for the operators to have compact parametrices. The operators $\tilde D_{\beta,\tau_0}$ dif\/fer from the opera\-tors~$D_{\beta,\tau_0}$ by a shift, so they behave in the same way.
\end{proof}

\begin{prop}The operators $D_{\beta,\tau_\infty}$, $\tilde D_{\beta,\tau_\infty}$ have compact parametrices if and only if $\beta_\infty\ne 0$.
\end{prop}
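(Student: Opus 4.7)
The plan is to reduce everything to an explicit diagonalization of $D_{\beta,\tau_\infty}$ in the Fourier basis $\{e^{inx}\}_{n\in\Z}$ of $L^2(S^1)$, and then transfer the conclusion to $\tilde D_{\beta,\tau_\infty}$ via a unitary rescaling. The operator $D_{\beta,\tau_\infty}=\beta_\infty\frac{1}{i}\frac{d}{dx}+c$ is, on the domain $\mathcal{D}_{\tau_\infty}$ of trigonometric polynomials, essentially self-adjoint (up to an additive constant it is a multiple of the standard circle Laplacian's square root), and its closure acts as $e^{inx}\mapsto(n\beta_\infty+c)e^{inx}$.

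If $\beta_\infty\neq 0$, then the eigenvalues $n\beta_\infty+c$ satisfy $|n\beta_\infty+c|\to\infty$ as $|n|\to\infty$, so $(I+D_{\beta,\tau_\infty}^*D_{\beta,\tau_\infty})^{-1/2}$ and $(I+D_{\beta,\tau_\infty}D_{\beta,\tau_\infty}^*)^{-1/2}$ are diagonal operators with eigenvalues tending to zero, hence compact. Conversely, if $\beta_\infty=0$, then $D_{\beta,\tau_\infty}=c\cdot I$ is a bounded multiple of the identity on the infinite-dimensional space $L^2(S^1)$, so $(I+|c|^2)^{-1/2}I$ is not compact. This establishes the result for $D_{\beta,\tau_\infty}$.

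For $\tilde D_{\beta,\tau_\infty}=e^{ix}D_{\beta,\tau_\infty}$, I would observe that multiplication by $e^{ix}$ is a unitary operator $V$ on $L^2(S^1)$, so $\tilde D_{\beta,\tau_\infty}=VD_{\beta,\tau_\infty}$. Taking closures, one immediately gets $\tilde D_{\beta,\tau_\infty}^*\tilde D_{\beta,\tau_\infty}=D_{\beta,\tau_\infty}^*V^*VD_{\beta,\tau_\infty}=D_{\beta,\tau_\infty}^*D_{\beta,\tau_\infty}$, while $\tilde D_{\beta,\tau_\infty}\tilde D_{\beta,\tau_\infty}^*=VD_{\beta,\tau_\infty}D_{\beta,\tau_\infty}^*V^{-1}$ is unitarily equivalent to $D_{\beta,\tau_\infty}D_{\beta,\tau_\infty}^*$. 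Consequently $\tilde D_{\beta,\tau_\infty}$ has compact parametrices if and only if $D_{\beta,\tau_\infty}$ does, and the equivalence with $\beta_\infty\neq 0$ follows from the previous paragraph.

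The only mildly subtle point is to make sure that the \emph{closures} of the operators initially defined on the dense subspace $\mathcal{D}_{\tau_\infty}$ of trigonometric polynomials are the self-adjoint (respectively normal-like) operators we want; this is standard since the Fourier basis consists of eigenvectors already in the domain and the operator is symmetric there. The appendix on compact parametrices cited in the excerpt supplies the precise equivalence between compactness of $(I+D^*D)^{-1/2}$, $(I+DD^*)^{-1/2}$ and the other formulations, so no further work is needed beyond the eigenvalue computation and the unitary equivalence above.
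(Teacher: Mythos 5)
Your proof is correct and follows essentially the same route as the paper: diagonalize $D_{\beta,\tau_\infty}$ on the Fourier basis with eigenvalues $n\beta_\infty+c$, note these escape to infinity exactly when $\beta_\infty\neq 0$, and transfer to $\tilde D_{\beta,\tau_\infty}=e^{ix}D_{\beta,\tau_\infty}$ via the unitary factor (the paper handles this by the same ``differs by a shift/unitary'' observation). One cosmetic caveat: since $\beta_\infty$ and $c$ may be complex, the closed operator is normal rather than self-adjoint, but this does not affect the eigenvalue argument or the compactness of $(I+D^*D)^{-1/2}$ and $(I+DD^*)^{-1/2}$.
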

\begin{proof} Similar to the proof of the proposition above, the operators $D_{\beta,\tau_\infty}$ are diagonal with eigenvalues $\beta_\infty n+c$, which go to inf\/inity if and only if $\beta_\infty\ne 0$.
\end{proof}

\begin{prop}The operators $D_{\beta,\tau_w}$ have compact parametrices if and only if
\begin{gather*}\beta(k+n-1)-\alpha(k)\to\infty \qquad \text{and} \qquad \beta(k-1)-\alpha(k+n)\to\infty
\end{gather*}
as $n,k \to\infty$.
\end{prop}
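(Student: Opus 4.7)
The plan is to diagonalize $D_{\beta,\tau_w}$ explicitly on an orthonormal basis of $H_{\tau_w}$ suggested by the inner product formula~\eqref{w_inner_prod}, and then invoke the standard criterion for compactness of the parametrices of a normal diagonal operator. Concretely, for each $n\ge 0$ the vectors $\{w(k)^{-1/2}U^nP_k\}_{k\ge0}$ form an orthonormal basis of the subspace spanned by series of the form $U^nf_n^+(\K)$, and for each $n\ge 1$ the vectors $\{w(k+n)^{-1/2}P_k(U^*)^n\}_{k\ge 0}$ form an orthonormal basis of the subspace spanned by series of the form $f_n^-(\K)(U^*)^n$. These subspaces are pairwise orthogonal and together exhaust $H_{\tau_w}$.

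The next step is to push the formula $D_{\beta,\tau_w}f=\beta(\K-1)f-f\alpha(\K)$ through the commutation relation~\eqref{CommRel}. Since $\beta(\K-1)U^n=U^n\beta(\K+n-1)$ and $(U^*)^n\alpha(\K)=\alpha(\K+n)(U^*)^n$, a direct computation gives
\begin{gather*}
D_{\beta,\tau_w}\bigl(U^nP_k\bigr)=\bigl(\beta(k+n-1)-\alpha(k)\bigr)U^nP_k,\\
D_{\beta,\tau_w}\bigl(P_k(U^*)^n\bigr)=\bigl(\beta(k-1)-\alpha(k+n)\bigr)P_k(U^*)^n.
\end{gather*}
Thus each of the orthogonal subspaces above is $D_{\beta,\tau_w}$-invariant, and the closure of $D_{\beta,\tau_w}$ is a normal diagonal operator whose eigenvalue sequences are exactly
\begin{gather*}
\lambda^+_{n,k}=\beta(k+n-1)-\alpha(k),\qquad n\ge 0,\ k\ge 0,\\
\lambda^-_{n,k}=\beta(k-1)-\alpha(k+n),\qquad n\ge 1,\ k\ge 0.
\end{gather*}

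The conclusion then follows from the standard fact (reviewed in Appendix~\ref{appendixA}) that a normal diagonal operator has compact parametrices if and only if its eigenvalues tend to infinity in modulus as one exits any finite subset of the index set. Applied to the two families $\lambda^\pm_{n,k}$ this is precisely the pair of divergence conditions in the statement.

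The one subtle point, which I expect to be the main thing to attend to rather than a real obstacle, is verifying that the closure of $D_{\beta,\tau_w}$ originally defined on $\mathcal{D}_{\tau_w}=\pi_{\tau_w}(\mathcal{A})\cdot[1]$ coincides with the natural diagonal extension to its maximal domain; this holds because $\mathcal{D}_{\tau_w}$ contains all finite linear combinations of the basis vectors $U^nP_k$ and $P_k(U^*)^n$, which form a core for any diagonal operator. Once this is in hand, the proof is essentially bookkeeping.
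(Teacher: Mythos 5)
Your proposal is correct and follows essentially the same route as the paper: decompose $H_{\tau_w}$ into the Fourier components $U^nf_n^+(\K)$ and $f_n^-(\K)(U^*)^n$, observe that $D_{\beta,\tau_w}$ acts diagonally with eigenvalues $\beta(k+n-1)-\alpha(k)$ and $\beta(k-1)-\alpha(k+n)$, and conclude via the standard criterion that a diagonal operator has compact parametrices exactly when its eigenvalues diverge. Your explicit orthonormal basis $\{w(k)^{-1/2}U^nP_k\}$, $\{w(k+n)^{-1/2}P_k(U^*)^n\}$ and the remark about these vectors forming a core are just a slightly more careful phrasing of the same diagonalization the paper uses.
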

\begin{proof}
The operators $D_{\beta,\tau_w}$ can be diagonalized using the Fourier series
\begin{gather*}f = \sum_{n\geq 0}U^n f^+_n(\K)+\sum_{n\geq 1}f^-_n(\K)(U^*)^n.\end{gather*}
Computing $D_{\beta,\tau_w}f = \beta(\K-1)f-f\alpha(\K)$ we get\begin{gather*}
D_{\beta,\tau_w}f = \sum_{n\geq 0}U^n (\beta(\K+n-1)-\alpha(\K))f^+_n(\K)+\sum_{n\geq 1}(\beta(\K-1)-\alpha(\K+n))f^-_n(\K)(U^*)^n.
\end{gather*}
It follows that the numbers $\beta(k+n-1)-\alpha(k)$ and $\beta(k-1)-\alpha(k+n)$ are the eigenvalues of the diagonal operator, and must diverge for the operator to have compact parametrices.
\end{proof}

Let us remark that, in the last proposition, if for example $\alpha(k)=\beta(k-1)-i\eta(k)$, with $\beta_\infty$ and $\eta(k)$ real,
$\sum\limits_{k\geq 0}|\eta(k)|^2w(k)<\infty$, and $\eta(k)\to\infty$ as $k\to\infty$, then
\begin{gather*}\beta(k+n-1)-\alpha(k)\approx\beta_\infty n+i\eta(k)\to\infty,\end{gather*}
as $k, n\to \infty$. Similarly, we have
\begin{gather*}\beta(k-1)-\alpha(k+n)\approx-\beta_\infty k+i\eta(k+n)\to\infty,\end{gather*}
as $k, n\to \infty$.

\subsection{Covariant derivations and normal states}
Here we study the parametrices of the $\rho_\theta$-covariant operators which implement derivations in GNS Hilbert spaces $H_{\tau_w}$ corresponding to faithful normal states. In this section we enhance the notation for $\tilde D_{\beta,\tau_w}$; we will use instead\begin{gather*}
D_{\beta,\alpha,w}f = U\beta(\K)f - fU\alpha(\K),
\end{gather*}
a notation that clearly specif\/ies the coef\/f\/icients of the operator.
Denote by $D_{\beta,\alpha,w}^{\max}$ the closure of $D_{\beta,\alpha,w}$ def\/ined on $\mathcal{D}^{\max}_{\tau }= \pi_{\tau }(\mathcal{A})\cdot[1]$.

Def\/ine the $*$-algebra\begin{gather*}
\mathcal{A}_0 = \bigg\{a = \sum_{n\geq 0}U^n a^+_n(\K)+\sum_{n\geq 1}a^-_n(\K)(U^*)^n \colon a^\pm_n(k)\in {c_{00}},\ \text{f\/inite sums}\bigg\},
\end{gather*}
where $c_{00}$ are the sequences with compact support, i.e., eventually zero, and let $D_{\beta,\alpha,w}^{\min}$ be the closure of~$D_{\beta,\alpha,w}$ def\/ined on $\mathcal{D}^{\min}_{\tau } = \pi_{\tau }(\mathcal{A}_0)\cdot[1]$. Finally, will use the symbol $D_{\beta,\alpha,w}$ for any closed operator in $H_{\tau_w}$ such that $D_{\beta,\alpha,w}^{\min}\subset D_{\beta,\alpha,w}\subset D_{\beta,\alpha,w}^{\max}$.

The main objective of this section is to prove the following no-go result.

\begin{theo}There is no closed operator $D_{\beta,\alpha,w}$ in $H_{\tau_w}$, $D_{\beta,\alpha,w}^{\min}\subset D_{\beta,\alpha,w}\subset D_{\beta,\alpha,w}^{\max}$, with $\beta_\infty\ne 0$, such that $D_{\beta,\alpha,w}$ has compact parametrices.
\end{theo}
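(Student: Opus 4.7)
My plan is to show that any closed extension $D$ with $D^{\min}_{\beta,\alpha,w}\subset D\subset D^{\max}_{\beta,\alpha,w}$ must carry an infinite-dimensional kernel, which immediately forbids $(I+D^*D)^{-1/2}$ from being compact. Since both $D^{\min}$ and $D^{\max}$ are $\rho_\theta$-covariant (they shift Fourier degree by $+1$), my first move would be to average $D$ against the action $U_{\tau_w,\theta}$ on a suitable invariant dense core, producing a covariant extension $\tilde D$ with the same formal expression; this lets me assume $D=\bigoplus_n D^{(n)}$, where on a positive Fourier block $H^{(n)}\cong\ell^2(\N,w)$ the block acts as the first-order difference $(D^{(n)}g)(k)=\beta(k+n)g(k)-\alpha(k)g(k+1)$.

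Next I would analyze the formal null equation $\alpha(k)g(k+1)=\beta(k+n)g(k)$, whose one-parameter solution space is spanned by $\phi_n(k):=\prod_{j=0}^{k-1}\beta(j+n)/\alpha(j)$. For infinitely many $n$ I plan to verify that $\phi_n\in\ell^2(\N,w)$ and, crucially, that $\phi_n$ lies in the graph-norm closure of $c_{00}$, i.e., in $\mathrm{dom}(D^{(n)}_{\min})$. The equivalent boundary-term condition is $\beta(N+n)^2|\phi_n(N)|^2 w(N)\to 0$ as $N\to\infty$, since truncating $\phi_n$ to $[0,N]$ produces a compactly supported vector on which $D^{(n)}$ returns only the single residue $\beta(N+n)\phi_n(N)\delta_N$. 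Once this is established, the $\phi_n$ from distinct Fourier blocks are mutually orthogonal and each lies in $\ker D$ for every closed extension of $D^{\min}$, and the resulting infinite-dimensional kernel contradicts the compactness of $(I+D^*D)^{-1/2}$.

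The hard part will be the combinatorial-analytic estimate on $\phi_n$. Because $\alpha$ is only constrained by $\alpha-\beta\in\ell^2(w)$, with no pointwise control, and because $w$ is an arbitrary strictly positive probability sequence, the growth of $\phi_n(N)$ and the decay of $w(N)$ need not cooperate under a single uniform argument. For rapidly decaying $w$ (e.g. geometric or compactly supported) the positive-Fourier construction goes through cleanly, with $\beta_\infty\neq 0$ controlling the ratio $\beta(j+n)/\alpha(j)$ and producing square-summable $\phi_n$ with a vanishing boundary residue. For slowly decaying $w$ I expect to have to dualize, applying the same strategy to $D^*$ on the negative-Fourier blocks — where the recurrence runs in the opposite direction and $w$ enters through shifted weights $w(k+n)$ — so that the cokernel of $D$ supplies the infinite-dimensional obstruction instead. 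In either case the assumption $\beta_\infty\neq 0$ is exactly what is needed to force the recurrence products to diverge, and to make either the kernel or the cokernel infinite-dimensional regardless of how the boundary condition between $D^{\min}$ and $D^{\max}$ is selected.
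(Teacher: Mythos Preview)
Your dichotomy---that for every admissible $(\beta,\alpha,w)$ either the kernel or the cokernel of $D_{\beta,\alpha,w}$ must be infinite-dimensional---is false, and this is the genuine gap. Take the simplest covariant choice $\alpha=\beta$ (which certainly satisfies $\alpha-\beta\in\ell^2(w)$) together with a \emph{slowly} decaying weight such as $w(k)=c/(k+1)^2$. After the paper's unitary reduction to the unweighted space this corresponds to $\mu(k)=k+1$, and one computes directly that the formal kernel element $f_n^+(k)\sim k^{n-1}$ lies in $\ell^2(\N)$ only for $n=0$, while the formal cokernel element $f_n^-(k)\sim k^{n+1}$ is never in $\ell^2(\N)$. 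Thus $\ker D^{\max}$ is at most one-dimensional and $\operatorname{coker}D^{\min}=0$; neither side of your proposed dichotomy fires, yet the theorem asserts (and the paper proves) that this $D$ still fails to have compact parametrices. Your heuristic ``rapidly decaying $w$ $\Rightarrow$ infinite kernel, slowly decaying $w$ $\Rightarrow$ infinite cokernel'' breaks down precisely because the cokernel recurrence runs in the \emph{same} direction as the kernel recurrence once you unwind the adjoint, so slow decay of $w$ kills both simultaneously rather than trading one for the other.

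The paper's proof in fact begins exactly where yours ends: its Lemma~\ref{kernel_property} shows that \emph{if} $D$ had compact parametrices then both $\ker D^{\max}$ and $\operatorname{coker}D^{\min}$ would be finite-dimensional (using the same truncation argument you sketch for the boundary residue), and then uses this finiteness to extract polynomial two-sided bounds $C^{-1}(k+1)^{-n_1}\le|\mu(k)|\le C(k+1)^{n_1}$. The contradiction comes not from kernel/cokernel size but from the \emph{spectrum}: with these bounds one shows (Lemma~\ref{spec_lemma}) that each Fourier block $(D_n^+)^{\max}$ has an entire right half-plane of $\ell^2$-eigenvalues and essentially no residual spectrum, which is incompatible with the trichotomy (empty, all of $\C$, or discrete) that governs the spectrum of any operator with compact parametrices. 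Your averaging step is also unnecessary: since $D^{\min}$ and $D^{\max}$ are already covariant, the sandwich property (Proposition~\ref{sandwich}) transfers compact parametrices from $D$ to $D^{\max}$, and one can work with $D^{\max}$ directly.
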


\begin{proof}
It is assumed below that $\beta_\infty\ne 0$. The outline of the proof is as follows. First, by a~sequence of equivalences, we show that the operator $D_{\beta,\alpha,w}$ has compact parametrices if and only if a simplif\/ied version of it has compact parametrices. Since in particular an operator with compact parametrices has to be Fredholm, the f\/initeness of the kernel and cokernel implies certain growth estimates on the parameters. Those estimates in turn let us compute parts of the spectrum of the Fourier coef\/f\/icients of $D_{\beta,\alpha,w}$ and that turns out to be not compatible with compactness of the parametrices.

First we show that $\beta(k)$ can be replaced by its absolute values. We will need the following information.

\begin{lem}
Let $\{\beta(k)\}$ be a sequence of complex numbers. If $\beta(k+1)-\beta(k)\to \beta_\infty$ and $\beta_\infty\neq0$, then there exists positive constants $c_1$ and $c_2$, and a nonnegative constant $c_3$ such that
\begin{gather*}
c_2(k+1)-c_3 \le |\beta(k)|\le c_1(k+1).
\end{gather*}
Moreover $\left||\beta(k+1)|-|\beta(k)|\right|$ is bounded.
\end{lem}

\begin{proof}
We will prove f\/irst that $\beta(k) = \beta_\infty\cdot(k+1)(1 + o(1))$.
From this the f\/irst inequality follows immediately. We decompose $\beta(k)$ as follows\begin{gather*}
\beta(k)=\beta_\infty\cdot(k+1)+\beta_0(k),
\end{gather*}
so that $\beta_0(k)-\beta_0(k-1)\to 0$ as $k\to\infty$, $\beta_0(-1)=0$. Using the notation $\psi(k):=\beta_0(k)-\beta_0(k-1)$, we want to show that\begin{gather*}
\frac{\beta_0(k)}{k+1}=\frac{1}{k+1}\sum_{j=0}^k\psi(j)\to 0
\end{gather*}
as $k\to\infty$. Given $\varepsilon>0$ f\/irst choose $j_\varepsilon$ so that $|\psi(j)|\leq\varepsilon$ for $j\geq j_\varepsilon$. First we split the sum{\samepage
\begin{gather*}\frac{1}{k+1}\sum_{j=0}^k|\psi(j)|=\frac{1}{k+1}\sum_{j=0}^{j_\varepsilon-1}|\psi(j)|+\frac{1}{k+1}\sum_{j=j_\varepsilon}^k|\psi(j)|\leq \frac{1}{k+1}\sum_{j=0}^{j_\varepsilon-1}|\psi(j)|+\varepsilon,
\end{gather*}
and then choose $k_\varepsilon$ so that $\frac{\sup |\psi(j)| j_\varepsilon}{k+1}\leq \varepsilon$ for $k\geq k_\varepsilon$. It follows that $\frac{\beta_0(k)}{k+1}=o(1)$.}

The second part of the lemma follows from the estimate
\begin{gather*} ||\beta(k+1)| - |\beta(k)| |\le |\beta(k+1) - \beta(k)| < \infty.\tag*{\qed}
\end{gather*}\renewcommand{\qed}{}
\end{proof}

\begin{lem}The operator $D_{\beta,\alpha,w}$ such that $D_{\beta,\alpha,w}^{\min}\subset D_{\beta,\alpha,w}\subset D_{\beta,\alpha,w}^{\max}$ has compact parametrices if and only if the operator $D_{|\beta|,\alpha,w}$ such that $D_{|\beta|,\alpha,w}^{\min}\subset D_{|\beta|,\alpha,w}\subset D_{|\beta|,\alpha,w}^{\max}$ has compact parametrices.
\end{lem}

\begin{proof}
Def\/ine the unitary operator $V(\K)$ by
\begin{gather*} V(k) = \exp \left(i\sum_{j=0}^{k-1}\operatorname{Arg}(\beta(j))\right),\end{gather*} and consider the following map $f\mapsto V(\K)f$ for $f\in\mathcal{H}_{\tau_w}$. This map preserves the domains $\mathcal{D}^{\min}_{\tau }$ and $\mathcal{D}^{\max}_{\tau }$. A~direct computation gives that
\begin{gather*}D_{|\beta|,\alpha,w} = V(\K)D_{\beta,\alpha,w}V(\K)^{-1}.\end{gather*}
This shows that $D_{\beta,\alpha,w}$ and $D_{|\beta|,\alpha,w}$ are unitarily equivalent, thus completing the proof.
\end{proof}

\begin{lem}\label{para_shift} The operator $D_{\beta,\alpha,w}$ such that $D_{\beta,\alpha,w}^{\min}\subset D_{\beta,\alpha,w}\subset D_{\beta,\alpha,w}^{\max}$ has compact parametrices if and only if the operator $D_{\beta+\gamma_1,\alpha+\gamma_2,w}$ such that $D_{\beta+\gamma_1,\alpha+\gamma_2,w}^{\min}\subset D_{\beta+\gamma_1,\alpha+\gamma_2,w}\subset D_{\beta+\gamma_1,\alpha+\gamma_2,w}^{\max}$ has compact parametrices for any constants $\gamma_1$ and $\gamma_2$.
\end{lem}

\begin{proof}Notice that the dif\/ference $D_{\beta+\gamma_1,\alpha+\gamma_2,w}-D_{\beta,\alpha,w}$ is bounded, hence the two operators both either have or do not have compact parametrices simultaneously, see Appendix~\ref{appendixA}.
\end{proof}

It follows from those lemmas that, without loss of generality, we may assume that $\beta(k)>0$, where $\beta(k)$ satisf\/ies inequalities
\begin{gather}
c_2(k+1) \le \beta(k)\le c_1(k+1),\nonumber\\
|\beta(k+1) - \beta(k)| < \infty,\label{growth_cond}
\end{gather}
$c_1$ and $c_2$ positive.

Next we look at properties of $\alpha$. For a f\/inite sum\begin{gather*}
f = \sum_{n\geq 0}U^n f^+_n(\K)+\sum_{n\geq 1}f^-_n(\K)(U^*)^n
\end{gather*}
in the domain of the operator $D_{\beta,\alpha,w}$ we can write $D_{\beta,\alpha,w}f = U\beta(\K)f - fU\alpha(\K)$ in Fourier components as\begin{gather*}
D_{\beta,\alpha,w}f = \sum_{n\geq 0}U^{n+1} (D_n^+f^+_n)(\K)+\sum_{n\geq 1}(D_n^-f^-_n)(\K)(U^*)^{n-1},
\end{gather*}
where\begin{gather*}
(D_n^+f)(k) = \beta(k+n)f(k) - \alpha(k)f(k+1), \\
(D_n^-f)(k) = \alpha(k+n-1)f(k) - \beta(k-1)f(k-1). \end{gather*}

\begin{lem}If the operator $D_{\beta,\alpha,w}$ such that $D_{\beta,\alpha,w}^{\min}\subset D_{\beta,\alpha,w}\subset D_{\beta,\alpha,w}^{\max}$ has compact parametrices, then $\dim \operatorname{coker} (D_{\beta,\alpha,w}^{\max})<\infty$ and $\alpha(k)$ has at most f\/initely many zeros.
\end{lem}

\begin{proof}First note that since $D_{\beta,\alpha,w}$ has compact parametrices it is a Fredholm operator, so it has f\/inite-dimensional cokernel. This means that $D_{\beta,\alpha,w}^{\max}$ has f\/inite-dimensional cokernel since $\ker (D_{\beta,\alpha,w}^{\max})^*\subset\ker (D_{\beta,\alpha,w})^*$. Next suppose that $\alpha(k)$ has inf\/initely many zeros and then try to compute $\ker (D_{\beta,\alpha,w}^{\max})^*$. In Fourier components this leads to the following equations
\begin{gather*}
(D_n^-)^*f(k) = \overline{\alpha}(k+n-1)f(k) - {\beta}(k)f(k+1) = 0.
\end{gather*}
Suppose $\overline{\alpha}(N)=0$ for some $N\ge0$, and consider $n=N+1$. Solving recursively the equation $(D_{N+1}^-)^*f(k)=0$ gives that $f(k)=0$ for all $k\ge1$. Thus the function
\begin{gather}\label{chi_zero_def}
\chi_0(k) =
\begin{cases}
1, & \text{for }k=0,\\
0, &\text{for }k\ge1
\end{cases}
\end{gather}
belongs to the kernel of $(D_{N+1}^-)^*$, and, because of the f\/inite support, $\chi_0(k)$ is in the domain of~$((D_{N+1}^-)^*)^{\min}$. This works for any $N\ge0$ such that ${\alpha}(N)=0$, producing an inf\/inite-dimensional cokernel for~$D_{\beta,\alpha,w}^{\max}$, contradicting the assumption. Thus the result follows.
\end{proof}

As a consequence of the above lemma and also Lemma~\ref{para_shift} we will assume from now on that $\alpha(k)\ne 0$ for every~$k$.

We f\/ind it convenient to work with unweighted Hilbert spaces. This is achieved by means of the following lemma.

\begin{lem}Let $\mathcal{H}_{\tau_w}$ be the weighted Hilbert space of Proposition~{\rm \ref{GNS_prop}(1)}, and let $\mathcal{H}$ be that Hilbert space for which the weight $w(k)=1$. The operator $D_{\beta,\alpha,w}$ such that $D_{\beta,\alpha,w}^{\min}\subset D_{\beta,\alpha,w}\subset D_{\beta,\alpha,w}^{\max}$ has compact parametrices if and only if the operator $D_{\beta,\tilde\alpha,1}$ such that $D_{\beta,\tilde\alpha,1}^{\min}\subset D_{\beta,\tilde\alpha,1}\subset D_{\beta,\tilde\alpha,1}^{\max}$ has compact parametrices,
where\begin{gather*}
\tilde{\alpha}(k) = \alpha(k)\frac{\sqrt{w(k)}}{\sqrt{w(k+1)}}.
\end{gather*}
\end{lem}

\begin{proof}In $\mathcal{H}_{\tau_w}$ write the norm as
\begin{gather*}
\|f\|_w^2 = \operatorname{tr}(w(\K)f^*f) = \operatorname{tr} \big(fw(\K)^{1/2}\big)^*\big(w(\K)^{1/2}f\big),
\end{gather*}
and set $\varphi(f) = fw(\K)^{1/2}\colon \mathcal{H}_{\tau_w}\to\mathcal{H}$. Then $\varphi$ is a bounded operator with bounded inverse, and is in fact an isomorphism of Hilbert spaces. Moreover, we have
\begin{gather*}
\varphi\big(D_{\beta,\alpha,w}\varphi^{-1}f\big) = U\beta(\K)f - fU\alpha(\K)\frac{\sqrt{w(\K)}}{\sqrt{w(\K+1)}}=D_{\beta,\tilde\alpha,1}f,
\end{gather*}
and $\varphi D_{\beta,\alpha,w}\varphi^{-1}\colon \mathcal{H}\to\mathcal{H}$. So $D_{\beta,\alpha,w}$ and $D_{\beta,\tilde\alpha,1}$ are unitarily equivalent, thus completing the proof. Notice also that $\tilde{\alpha}(k)\ne 0$, because ${\alpha}(k)\ne 0$.
\end{proof}

From now on we will work with operators $D_{\beta,\alpha,1}^{\min}\subset D_{\beta,\alpha,1}\subset D_{\beta,\alpha,1}^{\max}$ in the unweighted Hilbert space~$\mathcal{H}$. For convenience we def\/ine a sequence $\{\mu(k)\}$ such that $\mu(0) = 1$ and
\begin{gather*}
\alpha(k) = \beta(k)\frac{\mu(k+1)}{\mu(k)}.
\end{gather*}
Such $\mu(k)$ is completely determined by the above equation in terms of $\alpha$ and $\beta$ and will be used as a coef\/f\/icient instead of $\alpha$. We rewrite the four main operators as follows
\begin{gather*}
(D_n^+f)(k) = \beta(k+n)\left(f(k) - \frac{\beta(k)}{\beta(k+n)}\frac{\mu(k+1)}{\mu(k)}f(k+1)\right), \\
(D_n^-f)(k) = \beta(k+n-1)\frac{\mu(k+n)}{\mu(k+n-1)}\left(f(k) - \frac{\beta(k-1)}{\beta(k+n-1)}\frac{\mu(k+n-1)}{\mu(k+n)}f(k-1)\right), \\
((D_n^+)^*)f(k) = \overline{\beta}(k+n)\left(f(k) - \frac{\overline{\beta}(k)}{\overline{\beta}(k+n)}\frac{\overline{\mu}(k)}{\overline{\mu}(k-1)}f(k-1)\right), \\
((D_n^-)^*f)(k) = \overline{\beta}(k+n-1)\frac{\overline{\mu}(k+n)}{\overline{\mu}(k+n-1)}\left(f(k) - \frac{\overline{\beta}(k)}{\overline{\beta}(k+n-1)}\frac{\overline{\mu}(k+n-1)}{\overline{\mu}(k+n)}f(k+1)\right).
\end{gather*}

Next, using Fourier components above, we study the kernel and the cokernel of $D_{\beta,\alpha,1}$.

\begin{lem}\label{formal_kernels}
The formal kernels of $D_n^+$ and $(D_n^-)^*$ are one-dimensional and are spanned by, correspondingly
\begin{gather*}
f_n^+(k) =\frac{1}{\mu(k)}\prod_{j=0}^{n-1}\beta(j+k),
\end{gather*}
and
\begin{gather*}
f_n^-(k) = \overline{\mu}(k+n-1)\prod_{j=0}^{n-1}\overline{\beta}(j+k).
\end{gather*}
The operators $D_n^-$ and $(D_n^+)^*$ have no algebraic kernel; consequently they have no kernel at all.
\end{lem}

\begin{proof}
We f\/irst study $D_n^+f(k)=0$; the calculations are the same as in \cite{KM1}. Solving the equation $D_n^+f(k)=0$ recursively, we arrive at
\begin{gather*}
f(k) = \prod_{j=0}^{k-1}\frac{\beta(j+n)}{\beta(j)}\frac{\mu(0)}{\mu(k)}f(0) = \frac{f(0)}{\mu(k)}\frac{\beta(n)\cdots\beta(k+n-1)}{\beta(0)\cdots\beta(k-1)} \\
\hphantom{f(k)}{} =\frac{f(0)}{\mu(k)}\frac{\beta(n)\cdots\beta(k+n-1)\cdot\beta(0)\cdots\beta(n-1)}{\beta(0)\cdots\beta(k-1)\cdot\beta(0)\cdots\beta(n-1)} = \frac{f(0)}{\mu(k)}\frac{\beta(k)\cdots\beta(k+n-1)}{\beta(0)\cdots\beta(n-1)}.
\end{gather*}
Other calculations are similar. This completes the proof.
\end{proof}

The computations above were formal; to actually compute the kernel and the cokernel of~$D_{\beta,\alpha,1}$ we need to look at only those solutions which are in the domain/codomain of $D_{\beta,\alpha,1}$. It is important to keep in mind the following inclusions\begin{gather*}
\ker D_{\beta,\alpha,1}^{\min}\subset \ker D_{\beta,\alpha,1}\subset \ker D_{\beta,\alpha,1}^{\max},
\end{gather*}
and \begin{gather*}
\operatorname{coker}D_{\beta,\alpha,1}^{\max}\subset \operatorname{coker}D_{\beta,\alpha,1}\subset \operatorname{coker}D_{\beta,\alpha,1}^{\min}.
\end{gather*}

The following lemma exhibits the f\/irst key departure from the analogous classical analysis of the d-bar operator.

\begin{lem}\label{kernel_property}
If the operator $D_{\beta,\alpha,1}$ such that $D_{\beta,\alpha,1}^{\min}\subset D_{\beta,\alpha,1}\subset D_{\beta,\alpha,1}^{\max}$ has compact parametrices, then both $\ker D_{\beta,\alpha,1}^{\max}$ and $\operatorname{coker}D_{\beta,\alpha,1}^{\min}$ are finite-dimensional.

Moreover, the sums
\begin{gather*}
\sum_{k=0}^\infty\left|\prod_{j=0}^{n-1}\beta(j+k)\right|^2|\mu(k+n-1)|^2 \qquad\text{and}\qquad\sum_{k=0}^\infty\left|\prod_{j=0}^{n-1}\beta(j+k)\right|^2\frac{1}{|\mu(k)|^2}
\end{gather*}
are both infinite for all $n\geq n_0$.
\end{lem}

\begin{proof}Let $f_n^+$ and $f_n^-$ be solutions to the equations $D_n^+f = 0$ and $(D_n^-)^*f = 0$ respectively, as described in Lemma~\ref{formal_kernels}. First we study $D_n^+f=0$. There are two options
\begin{enumerate}\itemsep=0pt
\item[$(1)$] $\|f_n^+\|<\infty$ for all $n$, or
\item[$(2)$] there exists $n_0\ge0$ such that $\|f_{n_0}^+\|=\infty$.
\end{enumerate}

Consider the f\/irst option f\/irst, i.e.,
\begin{gather*}
\sum_{k=0}^\infty |f_n^+(k)|^2 = \sum_{k=0}^\infty \frac{\beta(k)^2\cdots\beta(k+n-1)^2}{|\mu(k)|^2}<\infty
\end{gather*}
for every $n$, which implies that $D_{\beta,\alpha,1}^{\max}$ has an inf\/inite-dimensional kernel. We argue below that in this case the kernel of $D_{\beta,\alpha,1}^{\min}$ is also inf\/inite-dimensional, which is not true in classical theory. Consider the sequence
\begin{gather*}
f_N(k) =
\begin{cases}
f_n^+(k), &\text{for }k\le N, \\
0, &\text{else}.
\end{cases}
\end{gather*}
Notice that, because it is eventually zero, the sequence $f_N(k)$ is in the domain of $(D_n^+)^{\min}$ and $f_N\to f_n^+$ in $\ell^2(\N)$ as $N\to\infty$. Moreover, a direct calculation shows that
\begin{gather*}
D_n^+f_N(k) = \begin{cases}
 \beta(n+N)f_n^+(N)= f_{n+1}^+(N), &\text{for }k=N, \\
0, &\text{else}.
\end{cases}
\end{gather*}
From this we see that $D_n^+f_N\to0$ as $N\to\infty$ since $\|f_n^+\|<\infty$ for all $n$. This shows that the formal kernel of~$(D_n^+)$ is contained in the domain of $(D_n^+)^{{min}}$. This implies that $D_{\beta,\alpha,1}$ has an inf\/inite-dimensional kernel contradicting the fact that $D_{\beta,\alpha,1}$ is Fredholm. A similar argument produces an inf\/inite-dimensional cokernel for $D_{\beta,\alpha,1}$ by studying option~$(1)$ for $(D_n^-)^*f = 0$. Consequently, option $(1)$ does not happen in our case, and option~$(2)$ must be true. It is clear from the growth conditions \eqref{growth_cond} that if there exists $n_0$ such that $\|f_{n_0}^{\pm}\|=\infty$ then $\|f^{\pm}_n\|=\infty$ for all $n\ge n_0$. But that means that the $\ell^2(\N)$ kernels of $(D_n^{\pm})$ and $(D_n^{\pm})^*$ are all zero for $n$ large enough.
This implies that both $\ker D_{\beta,\alpha,1}^{\max}$ and $\operatorname{coker}D_{\beta,\alpha,1}^{\min}$ are f\/inite-dimensional.
Moreover, $\|f^{\pm}_n\|=\infty$ for all $n\ge n_0$ gives the divergence of the sums in the statement of the lemma. Thus the proof is complete.
\end{proof}

It follows from the above lemma, and from the remarks right before it, that all three operators $D_{\beta,\alpha,1}^{\min}\subset D_{\beta,\alpha,1}\subset D_{\beta,\alpha,1}^{\max}$ have compact parametrices.

Next we discuss the inverses of $D_n^\pm$ and their formal adjoints. Operators $D_n^-$ and $(D_n^+)^*$ have no formal kernels and can be inverted on any domain of sequences. The other operators preserve $c_0\subset \ell^2(\N)$ and can be inverted on~$c_0$. The corresponding formulas are
\begin{gather*}
(D_n^+)^{-1}g(k) = \sum_{j=k}^\infty\frac{\beta(k)\cdots\beta(k+n-1)}{\beta(j)\cdots\beta(j+n)}\cdot\frac{\mu(j)}{\mu(k)}g(j),\\
(D_n^-)^{-1}g(k) = \begin{cases}
\displaystyle \sum_{j=0}^k \frac{\beta(j)\cdots\beta(j+n-2)}{\beta(k)\cdots\beta(k+n-1)}\cdot\frac{\mu(j+n-1)}{\mu(k+n)}g(j) &\text{if }n\ge2,\\
\displaystyle \frac{1}{\beta(k)\mu(k+1)}\sum_{j=0}^k\mu(j)g(j) &\text{if }n=1,
\end{cases}\\
((D_n^+)^*)^{-1}g(k) = \sum_{j=0}^k \frac{\overline{\beta}(j)\cdots\overline{\beta}(j+n-1)}{\overline{\beta}(k)\cdots\overline{\beta}(k+n)}
\cdot\frac{\overline{\mu}(k)}{\overline{\mu}(j)}g(j),
\\
((D_n^-)^*)^{-1}g(k) =
\begin{cases}
\displaystyle \sum_{j=k}^\infty \frac{\overline{\beta}(k)\cdots\overline{\beta}(k+n-2)}{\overline{\beta}(j)\cdots\overline{\beta}(j+n-1)}\cdot \frac{\overline{\mu}(k+n-1)}{\overline{\mu}(j+n-1)}g(j) &\text{if }n\ge2,\\
\displaystyle \overline{\beta}(k)\overline{\mu}(k)\sum_{j=k}^\infty \frac{1}{\overline{\mu}(j)}g(j) &\text{if }n=1.
\end{cases}
\end{gather*}

Using those formulas we obtain key growth estimates on coef\/f\/icients $\mu(k)$ in the following lemma.
\begin{lem}
If the operator $D_{\beta,\alpha,1}$ such that $D_{\beta,\alpha,1}^{\min}\subset D_{\beta,\alpha,1}\subset D_{\beta,\alpha,1}^{\max}$ has compact parametrices then, for $n$ large enough, $(D_n^\pm)^{\min}$ and $((D_n^\pm)^*)^{\max}$ are invertible operators with bounded inverses. Moreover, there exists a number $n_1\ge0$ and a constant $C$ such that
\begin{gather}\label{mu_ineq}
\frac{1}{C(k+1)^{n_1}}\le |\mu(k)|\le C(k+1)^{n_1}
\end{gather}
for $n\ge n_1$.
\end{lem}

\begin{proof}The Fredholm property of $D_{\beta,\alpha,1}$ implies that the ranges of $D_{\beta,\alpha,1}^{\max}$ and $((D_{\beta,\alpha,1})^*)^{\max}$ are closed. By the proof of Lemma~\ref{kernel_property} there exists $n_0\ge0$ such that for all $n\ge n_0$ the $\ell^2(\N)$ kernels of $D_n^\pm$ and $(D_n^\pm)^*$ are zero. It follows that $\operatorname{Ran}(D_n^-)^{\max} = \operatorname{Ran}((D_n^+)^*)^{\max} = \ell^2(\N)$ for $n\ge n_0$. In particular this says $((D_n^-)^{\max})^{-1}\chi_0(k)\in\ell^2(\N)$, where $\chi_0(k)$ was def\/ined in \eqref{chi_zero_def}. As a consequence we obtain
\begin{gather*}
\sum_{k=0}^\infty \frac{1}{\beta(k)^2\cdots\beta(k+n-1)^2|\mu(k+n)|^2}<\infty.
\end{gather*}
Using the growth conditions \eqref{growth_cond} the inequality above yields
\begin{gather*}
\sum_{k=0}^\infty \frac{1}{(k+1)^2\cdots (k+n)^2|\mu(k+n)|^2}< \infty
\end{gather*}
for $n\geq n_0$, which gives the left hand side of the inequality~\eqref{mu_ineq}. To the get the right-hand side, we use
$(((D_n^+)^{\max})^*)^{-1}\chi_0(k)\in\ell^2(\N)$.
\end{proof}

Now that we have control over the coef\/f\/icients of $D_{\beta,\alpha,1}$ we can compute the spectrum of its Fourier coef\/f\/icients. Notice that, using Proposition~\ref{sandwich} in the appendix, we have that since~$D_{\beta,\alpha,1}^{\min}$ and~$D_{\beta,\alpha,1}^{\max}$ are Fredholm, and $D_{\beta,\alpha,1}$ has compact parametrices, then both~$D_{\beta,\alpha,1}^{\min}$ and~$D_{\beta,\alpha,1}^{\max}$ also have compact parametrices. The following calculations are similar to the calculations in~\cite{BHS} for the Cesaro operator.

\begin{lem}\label{spec_lemma}
The continuous spectrum $\sigma_c$, the point spectrum $\sigma_p$, and the residual spect\-rum~$\sigma_r$, of the operator $(D_n^+)^{\max}$ have the following properties:
\begin{enumerate}\itemsep=0pt
\item[$1)$] $\sigma_c((D_n^+)^{\max}) = \varnothing$,
\item[$2)$] $\{\lambda \in\C \colon \operatorname{Re}\lambda \gg 0\} \subset \sigma_p((D_n^+)^{\max})$,
\item[$3)$] $\{\lambda\in\C \colon \operatorname{Re}\lambda \ll 0\} \not\subset\sigma_p((D_n^+)^{\max})$,
\item[$4)$] $\sigma_r((D_n^+)^{\max}) = \varnothing$ or has at most finitely many spectral values.
\end{enumerate}
\end{lem}

\begin{proof}
The Fredholm property of $D_{\beta,\alpha,1}^{\max}$ implies that $\operatorname{Ran}((D_n^+)^{\max} - \lambda I)$ is closed, meaning that $\sigma_c((D_n^+)^{\max}) = \varnothing$.

Next we study the eigenvalue equation $(D_n^+f)(k) = \lambda f(k)$, that is
\begin{gather*}
\beta(k+n)f(k) - \beta(k)\frac{\mu(k+1)}{\mu(k)}f(k+1) = \lambda f(k).
\end{gather*}
This equation can be easily solved, yielding a one-parameter solution generated by
\begin{gather*}
f_\lambda(k) = \prod_{j=0}^{k-1}\left(\frac{\beta(j+n)-\lambda}{\beta(j)}\right)\frac{1}{\mu(k)} = \prod_{j=0}^{k-1}\left(1 + \frac{\beta(j+n)-\beta(j) -\lambda}{\beta(j)}\right)\frac{1}{\mu(k)}.
\end{gather*}
The question then is when does $f_\lambda\in\ell^2(\N)$? To study estimates on $f_\lambda(k)$ we use the following three simple inequalities
\begin{gather}\label{classic_inequal}
\begin{aligned}
&1)\quad 1 + x\le e^x, \\
&2)\quad \ln(x)=\int_1^k\frac{1}{x} dx \le \sum_{j=0}^k\frac{1}{j+1}\le 1 + \int_1^k\frac{1}{x} dx=1+\ln(x), \\
&3)\quad \text{there exists a constant }C_\varepsilon\ \text{such that }C_\varepsilon e^{(1-\varepsilon)x}\le 1+x,\\
&\quad\quad\text{for }0<\varepsilon< 1 \ \text{and small } |x|.
\end{aligned}
\end{gather}
First we estimate from above each factor in the formula for $f_\lambda$ as follows
\begin{gather*}
\left|1 + \frac{\beta(j+n)-\beta(j)-\lambda}{\beta(j)}\right|^2 =1 + 2\operatorname{Re}\left(\frac{\beta(j+n)-\beta(j)-\lambda}{\beta(j)}\right) +\left|\frac{\beta(j+n)-\beta(j)-\lambda}{\beta(j)}\right|^2 \\
\qquad{}
\le \exp \left(2\operatorname{Re}\left(\frac{\beta(j+n)-\beta(j)-\lambda}{\beta(j)}\right) +\left|\frac{\beta(j+n)-\beta(j)-\lambda}{\beta(j)}\right|^2\right),
\end{gather*}
where we used inequality $1)$ of equation~(\ref{classic_inequal}). This implies that
\begin{gather*}
|f_\lambda(k)|\le \exp \left[\sum_{j=0}^{k-1}\operatorname{Re}\left(\frac{\beta(j+n)-\beta(j)-\lambda}{\beta(j)}\right)+ \frac{1}{2}\sum_{j=0}^{k-1}\left|\frac{\beta(j+n)-\beta(j)-\lambda}{\beta(j)}\right|^2\right]\frac{1}{|\mu(k)|}.
\end{gather*}
Notice that for f\/ixed $\lambda$ we have $|\beta(j+n)-\beta(j) -\lambda|\le \operatorname{const}$ by~\eqref{growth_cond} and, because $c_2(j+1)\le \beta(j)\le c_1(j+1)$, we obtain
\begin{gather*}
\sum_{j=0}^{k-1} \left|\frac{\beta(j+n)-\beta(j)-\lambda}{\beta(j)}\right|^2\le \sum_{j=0}^{k-1}\frac{\operatorname{const}}{(j+1)^2}< \operatorname{const},
\end{gather*}
which accounts for the second term in the exponent. To estimate the f\/irst term we also use~$2)$ of equation~(\ref{classic_inequal}) to get\begin{gather*}
|f_\lambda(k)| \le \exp \left(\sum_{j=0}^{k-1}\frac{\operatorname{const} - \operatorname{Re}\lambda}{\operatorname{const} (j+1)}\right)\frac{1}{|\mu(k)|}\le \operatorname{const} (k+1)^{(\operatorname{const} - \operatorname{Re}\lambda)},
\end{gather*}
where we applied~\eqref{mu_ineq} to estimate~$\mu(k)$. This last inequality implies that $f_\lambda(k)\in\ell^2(\N)$ if $\operatorname{Re}\lambda \gg 0$. This shows that
\begin{gather*}\{\lambda \in\C \colon \operatorname{Re}\lambda \gg 0\} \subset \sigma_p\big((D_n^+)^{\max}\big).\end{gather*}

Next we estimate $f_\lambda$ from below by using part~$3)$ of equation~(\ref{classic_inequal}) with \begin{gather*}
x=2 \frac{(\beta(j+n)-\beta(j)-\operatorname{Re}\lambda)}{\beta(j)},
\end{gather*}
which, by previous discussion, is small for~$j$ large enough. We get the following estimate
\begin{gather*}
\left|1+ \frac{\beta(j+n)-\beta(j)-\lambda}{\beta(j)}\right|^2 \ge 1 + 2\ \frac{\beta(j+n)-\beta(j)-\operatorname{Re}\lambda}{\beta(j)} \\
\hphantom{\left|1+ \frac{\beta(j+n)-\beta(j)-\lambda}{\beta(j)}\right|^2}{}
\ge \exp \left(2(1-\varepsilon)\frac{(\beta(j+n)-\beta(j)-\operatorname{Re}\lambda)}{\beta(j)}\right),
\end{gather*}
valid for large $j$. By using the conditions on $\beta(j)$ and $\mu(k)$, and also $2)$ of equation~(\ref{classic_inequal}), we get
\begin{gather*}
|f_\lambda(k)| \ge \frac{\operatorname{const}}{|\mu(k)|}k^{(1-\varepsilon)(\operatorname{const}-\operatorname{Re}\lambda)} \ge (\operatorname{const})k^{(1-\varepsilon)(\operatorname{const}-\operatorname{Re}\lambda)}.
\end{gather*}
This inequality shows that if $\operatorname{Re}\lambda\ll 0$ then $f(k)\notin\ell^2(\N)$. This in turn implies that
\begin{gather*}\{\lambda\in\C \colon \operatorname{Re}\lambda \ll 0\} \not\subset\sigma_p\big((D_n^+)^{\max}\big).\end{gather*}

Finally, to determine the residual spectrum of $(D_n^+)^{\max}$, we consider the eigenvalue equation $(D_n^+)^*f(k) = \lambda f(k)$, which is the same as
\begin{gather*}
\beta(k+n)f(k) - \beta(k-1)\frac{\overline{\mu}(k)}{\overline{\mu}(k-1)}f(k-1) = \lambda f(k).
\end{gather*}
Rearranging the terms in the above equation yields
\begin{gather*}
[\beta(k+n)-\lambda]\frac{f(k)}{\overline{\mu}(k)} = \beta(k-1)\frac{f(k-1)}{\overline{\mu}(k-1)}.
\end{gather*}
This equation has non-trivial solutions if and only if $\beta(k+n)-\lambda = 0$ for some $k$, which can only happen for specif\/ic values of $\lambda$. Namely, if $\lambda_l = \beta(l+n)$, then the above equation recursively gives $f(0)=f(1) = \cdots = f(l-1) = 0$ and
\begin{gather*}f(k+l) = \operatorname{const} \beta(k)\cdots\beta(k+l-1) \overline{\mu}(l+k).\end{gather*}
If $l$ is large enough then $f(k)\notin\ell^2(\N)$. This means that the residual spectrum of $D_n^+$ has at most f\/initely many values or is empty, proving the remaining part of the lemma, thus completing the proof of the lemma.
\end{proof}

We can now easily f\/inish the proof of the theorem. As explained in appendix, if $D_{\beta,\alpha,1}^{\max}$ has compact parametrices then its spectrum is either empty, the whole plane $\C$, or consists of eigenvalues going to inf\/inity. Clearly this is not consistent with Lemma~\ref{spec_lemma}, and hence $D_{\beta,\alpha,1}$ does not have compact parametrices.
\end{proof}

\appendix
\section{Appendix}\label{appendixA}
The main objective of this appendix is to review some generalities about unbounded operators with compact parametrices. Presumably all of the statements below are known, however they don't seem to appear together in any one reference.

Throughout this appendix $D$ is a closed unbounded operator in a separable Hilbert space. Recall that~$D$ is called a Fredholm operator if there are bounded operators $Q_1$ and~$Q_2$ such that $Q_1D-I$ and $DQ_2-I$ are compact. The operators~$Q_1$ and~$Q_2$ are called left and right parametrices respectively. Equivalently, $D$ is a Fredholm operator if the kernel and the cokernel of $D$ are f\/inite-dimensional. A~Fredholm operator always has a single parametrix, i.e., a bounded operator $Q$ such that $QD-I$ and $DQ-I$ are compact. In the literature the case of unbounded Fredholm operators is usually not discussed directly, however a~closed operator can be considered as a~bounded operator on its domain equipped with the graph inner product $||x||_D^2=||x||^2+||Dx||^2$. A~good reference on Fredholm operators is~\cite{Sc}.

We say that a closed, Fredholm operator $D$ has {\it compact parametrices} if at least one of the parametrices $Q_1$ and $Q_2$ is compact. By applying $Q_1$ to $DQ_2-I$ on the left and $Q_2$ to $Q_1D-I$ on the right, we see that if one of the parametrices $Q_1$ and $Q_2$ is compact so is the other. Similarly, if $Q'_1$ and $Q'_2$ is another set of parametrices of an operator with compact parametrices, then both $Q'_1$ and $Q'_2$ must be comopact.
It is sometimes easier to construct separate left and right parametrices rather then a two-sided parametrix.

Our f\/irst task is to work out several equivalent def\/initions of an operator with compact parametrices.

For $\lambda$ in the resolvent set $\rho(D)$ let $R_D(\lambda)=(D-\lambda I)^{-1}$ be the resolvent operator.

\begin{prop}
Suppose $\rho(D)\neq\varnothing$ and $R_D(\lambda)$ is compact for some $\lambda\in\rho(D)$, then $R_D(\mu)$ is compact for every $\mu\in\rho(D)$.
\end{prop}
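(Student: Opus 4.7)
The plan is to deduce compactness of $R_D(\mu)$ from compactness of $R_D(\lambda)$ via the first resolvent identity, exploiting the ideal property of the compact operators inside the bounded operators.

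First I would recall that for any two points $\lambda,\mu$ in the resolvent set of a closed operator $D$, the resolvents commute and satisfy
\begin{gather*}
R_D(\mu) - R_D(\lambda) = (\mu - \lambda)\, R_D(\mu) R_D(\lambda).
\end{gather*}
This identity follows from writing $R_D(\mu)(D-\lambda I) - (D-\mu I)R_D(\lambda) = (\mu - \lambda)I$ on the dense domain $\mathrm{Dom}(D)$ and composing with the two resolvents; since $\rho(D)$ is nonempty and $D$ is closed, both $R_D(\lambda)$ and $R_D(\mu)$ are everywhere defined bounded operators, so the identity extends to all of $\mathcal H$.

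Next, I would rearrange the identity as
\begin{gather*}
R_D(\mu) = R_D(\lambda) + (\mu - \lambda)\, R_D(\mu) R_D(\lambda).
\end{gather*}
Since the compact operators form a two-sided ideal in the bounded operators, and $R_D(\mu)$ is bounded while $R_D(\lambda)$ is compact by assumption, the product $R_D(\mu)R_D(\lambda)$ is compact. Adding the compact operator $R_D(\lambda)$ yields that $R_D(\mu)$ is compact. Since $\mu \in \rho(D)$ was arbitrary, this gives the claim.

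There is no real obstacle here; the only thing to be slightly careful about is justifying the resolvent identity for an unbounded closed $D$, but this is standard once one notes that $R_D(\lambda)$ maps $\mathcal H$ into $\mathrm{Dom}(D)$ and $D - \mu I = (D - \lambda I) + (\lambda - \mu)I$ on $\mathrm{Dom}(D)$. The symmetry of the argument (applying $R_D(\lambda)$ on the right versus left) also shows that compactness is shared between left and right parametrices, consistent with the remarks in the appendix.
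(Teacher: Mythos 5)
Your argument is correct and is exactly the paper's argument: the paper's proof simply says the claim follows immediately from the resolvent identity, and you have written out that identity, rearranged it, and invoked the ideal property of the compact operators. No gaps here.
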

\begin{proof} This immediately follows from the resolvent identity.
\end{proof}
First we rephrase the concept of an operator with compact parametrices in terms of resolvents.

\begin{prop}
Suppose $\rho(D)\neq\varnothing$ and $R_D(\lambda)$ is compact for some $\lambda\in\rho(D)$, then $D$ is a Fredholm operator with compact parametrices. Conversly, if $D$ is a Fredholm operator with compact parametrices and $\rho(D)\neq\varnothing$ then $R_D(\lambda)$ is compact.
\end{prop}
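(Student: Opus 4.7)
The plan is to establish both directions by explicitly producing parametrices (forward) and by expressing the resolvent as a combination of already compact operators (converse), using the two basic resolvent identities
\begin{gather*}
DR_D(\lambda)=I+\lambda R_D(\lambda),\qquad R_D(\lambda)D=I+\lambda R_D(\lambda)\big|_{\operatorname{Dom}(D)}.
\end{gather*}

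For the forward direction, I would simply set $Q:=R_D(\lambda)$. Since $R_D(\lambda)$ is bounded with range equal to $\operatorname{Dom}(D)$, the products $DQ$ and $QD$ make sense on the appropriate domains, and the identities above give $DQ-I=\lambda R_D(\lambda)$ and $QD-I=\lambda R_D(\lambda)$ on $\operatorname{Dom}(D)$. Both are compact by hypothesis, so $Q$ serves simultaneously as a compact left and right parametrix, and $D$ is Fredholm with compact parametrices.

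For the converse, suppose $Q$ is a (two-sided) parametrix with $Q$ compact, so $DQ-I=K_1$ and $QD-I=K_2$ with $K_1,K_2$ compact, and fix $\lambda\in\rho(D)$. The trick is to rewrite $R_D(\lambda)$ in terms of $Q$ modulo compact operators. Composing $DQ=I+K_1$ on the left with $R_D(\lambda)$ and using $R_D(\lambda)D=I+\lambda R_D(\lambda)$ on the range of $Q$ (which lies in $\operatorname{Dom}(D)$) yields
\begin{gather*}
Q+\lambda R_D(\lambda)Q=R_D(\lambda)+R_D(\lambda)K_1,
\end{gather*}
whence
\begin{gather*}
R_D(\lambda)=Q+\lambda R_D(\lambda)Q-R_D(\lambda)K_1.
\end{gather*}
Each term on the right is compact ($Q$ by assumption, and compact times bounded or bounded times compact for the other two), so $R_D(\lambda)$ is compact.

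I do not expect any real obstacle beyond bookkeeping; the only point that needs mild care is the domain issue, namely that $\operatorname{Ran}(Q)\subset\operatorname{Dom}(D)$ (which is forced by $DQ$ being defined in the parametrix identity) and that the identity $R_D(\lambda)D=I+\lambda R_D(\lambda)$ holds only on $\operatorname{Dom}(D)$, so the chain of equalities must be applied to elements in $\operatorname{Ran}(Q)\subset\operatorname{Dom}(D)$ before being extended by continuity to an identity of bounded operators on all of the Hilbert space.
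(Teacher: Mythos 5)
Your argument is correct and follows essentially the same route as the paper: the forward direction uses the identical resolvent identities $DR_D(\lambda)=I+\lambda R_D(\lambda)$ and $R_D(\lambda)D=I+\lambda R_D(\lambda)$ to exhibit $R_D(\lambda)$ as a parametrix with compact error. For the converse, your explicit identity $R_D(\lambda)=Q+\lambda R_D(\lambda)Q-R_D(\lambda)K_1$ (with the domain caveat $\operatorname{Ran}(Q)\subset\operatorname{Dom}(D)$) simply spells out the detail that the paper disposes of in one line by noting that the resolvent is a parametrix of an operator with compact parametrices, so no substantive difference.
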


\begin{proof}Consider the following calculations
\begin{gather*}
DR_D(\lambda) = (D-\lambda I)R_D(\lambda) + \lambda R_D(\lambda) = I + \lambda R_D(\lambda),
\end{gather*}
and
\begin{gather*}
R_D(\lambda)D = R_D(\lambda)(D-\lambda I) + R_D(\lambda)\lambda = I + \lambda R_D(\lambda).
\end{gather*}
So, if $R_D(\lambda)$ is compact for $\lambda\in\rho(D)$ then $D$ is Fredholm with parametrix $\lambda R_D(\lambda)$ which is compact. Conversely, if $D$ is a~Fredholm operator with compact parametrices and $\rho(D)\neq\varnothing$ then $R_D(\lambda)$ is compact as a parametrix of~$D$.
This completes the proof.
\end{proof}

Next we give a characterization of operators with compact parametrices in terms of self-adjoint operators $D^*D$ and $DD^*$.
\begin{prop}\label{ddstarprop}
Suppose $(I+D^*D)^{-1/2}$ and $(I+DD^*)^{-1/2}$ are both compact. Then $D$ is a Fredholm operator with compact parametrices. Conversely, if $D$ is a Fredholm operator with compact parametrices then $(I+D^*D)^{-1/2}$ and $(I+DD^*)^{-1/2}$ are both compact operators.
\end{prop}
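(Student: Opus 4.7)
The plan is to handle the two directions separately, using explicit parametrix formulas going one way and a relative-compactness argument going the other. For the forward direction, assume $(I+D^*D)^{-1/2}$ and $(I+DD^*)^{-1/2}$ are compact. I would produce an explicit compact parametrix for $D$, namely
\[Q = (I+D^*D)^{-1}D^*.\]
A formal computation on a suitable core gives $DQ = DD^*(I+DD^*)^{-1} = I - (I+DD^*)^{-1}$ and $QD = (I+D^*D)^{-1}D^*D = I - (I+D^*D)^{-1}$, and both remainders are compact, since they are squares of the compact operators in the hypothesis.

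The main technical point is to establish that $Q$ is a bounded (in fact compact) operator on the whole Hilbert space. I would achieve this by factoring $Q = (I+D^*D)^{-1/2}\cdot\bigl((I+D^*D)^{-1/2}D^*\bigr)$. The second factor is bounded because its adjoint $D(I+D^*D)^{-1/2}$ is bounded: writing $D = V|D|$ via the polar decomposition and applying the functional-calculus estimate $\sup_{t\geq 0} t(1+t^2)^{-1/2} \leq 1$, one sees $\|D(I+D^*D)^{-1/2}\| \leq 1$. Since the first factor is compact by hypothesis, $Q$ is compact, yielding the desired compact parametrix.

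For the converse, suppose $D$ is Fredholm with compact parametrix $Q$, so that $QD - I = K_1$ and $DQ - I = K_2$ are compact. The operator $I+D^*D$ is positive self-adjoint and bounded below by $I$, so $(I+D^*D)^{-1}$ exists as a bounded operator. To show it is compact, take any bounded sequence $\{g_n\}$ and set $f_n = (I+D^*D)^{-1}g_n$; the identity $\|f_n\|^2 + \|Df_n\|^2 = \langle f_n, g_n\rangle$ forces both $\{f_n\}$ and $\{Df_n\}$ to be bounded. Then the parametrix identity $f_n = QDf_n - K_1 f_n$ exhibits $f_n$ as the difference of two sequences each having a convergent subsequence ($Q$ compact applied to the bounded sequence $\{Df_n\}$, and $K_1$ compact applied to the bounded sequence $\{f_n\}$), so $\{f_n\}$ itself has a convergent subsequence. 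Running the same argument with $D^*$ in place of $D$ and $Q^*$ as parametrix gives compactness of $(I+DD^*)^{-1}$.

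To upgrade compactness of $(I+D^*D)^{-1}$ to compactness of $(I+D^*D)^{-1/2}$, I would invoke continuous functional calculus: $(I+D^*D)^{-1/2} = g((I+D^*D)^{-1})$ where $g(t) = \sqrt{t}$ is continuous on a compact interval containing the spectrum and vanishes at zero, so it is a norm limit of polynomials in $(I+D^*D)^{-1}$ without constant term and hence compact. The main obstacle in the argument is the domain subtlety in the forward direction; naive manipulation of the unbounded $D^*$ on both sides of the parametrix formulas can produce ill-defined expressions, and the cleanest fix is to exploit the polar-decomposition/functional-calculus bound $\|D(I+D^*D)^{-1/2}\| \leq 1$ from the outset rather than treating $D^*$ as if it were bounded.
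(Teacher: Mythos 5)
Your argument is correct. The forward direction is essentially the paper's: the same explicit parametrix (the paper takes $Q=D^*(I+DD^*)^{-1}$, you take the mirrored $(I+D^*D)^{-1}D^*$, which agrees with it up to closure) and the same compact-times-bounded factorization, with the boundedness of $D(I+D^*D)^{-1/2}$ doing the work; note only that the paper's ordering makes $Q$ everywhere defined from the start, whereas yours is a priori defined on $\operatorname{dom}(D^*)$ and must be replaced by its bounded extension, which your factorization indeed supplies. The converse is where you genuinely diverge. The paper gets compactness of $(I+D^*D)^{-1/2}$ in one line from the identity $(I+D^*D)^{-1/2}=QD(I+D^*D)^{-1/2}-K_2(I+D^*D)^{-1/2}$, again leaning on the boundedness of $D(I+D^*D)^{-1/2}$. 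You instead prove compactness of the resolvent $(I+D^*D)^{-1}$ by a sequential argument based on the quadratic-form identity $\|f_n\|^2+\|Df_n\|^2=\langle f_n,g_n\rangle$ together with the left parametrix identity, then upgrade to the square root by continuous functional calculus, and handle $(I+DD^*)^{-1}$ by passing to $D^*$ with $Q^*$ as its parametrix. This is valid, provided you record the small verification that $Q^*$ really is a (compact) parametrix of $D^*$, i.e., that $(DQ)^*=Q^*D^*$ and $(QD)^*=D^*Q^*$ hold because $Q$ is bounded; the paper sidesteps this by treating $D^*D$ and $DD^*$ symmetrically with the same one-line trick. The paper's converse is shorter; yours is more elementary in that direction, trading the bounded-transform estimate for a hands-on subsequence extraction plus functional calculus.
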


\begin{proof}
We construct the parametrices of $D$ explicitly. To this end consider the operator
\begin{gather*}Q := D^*(I+DD^*)^{-1}.\end{gather*}
Notice that, since $(I+DD^*)^{-1/2}$ is compact, $(I+DD^*)^{-1}$ is compact. Moreover, we have by the functional calculus that operator $D^*(I+DD^*)^{-1/2}$ is bounded. Consequently we have
\begin{gather*}
DQ = I - (I+DD^*)^{-1}.
\end{gather*}
Writing $Q$ as \begin{gather*}
Q = D^*(I+DD^*)^{-1/2}(I+DD^*)^{-1/2},
\end{gather*}
we see that $Q$ is compact and so $D$ has compact right parametrix. Similar argument shows that~$Q$ is also a left parametrix.

Conversely, let $Q$ be a compact parametrix of $D$, i.e., $DQ = I + K_1$ and $QD = I + K_2$, where~$K_1$ and~$K_2$ are compact. Then consider
\begin{gather*}
(I+D^*D)^{-1/2} = (QD -K_2)(I+D^*D)^{-1/2} = QD(I+D^*D)^{-1/2} - K_2(I+D^*D)^{-1/2}.
\end{gather*}
Since $D(I+D^*D)^{-1/2}$ and $(I+D^*D)^{-1/2}$ are bounded and~$Q$ and $K_2$ are compact, it follows that the right hand side of the above equation is compact. A similar decomposition works for showing the compactness of $(I+DD^*)^{-1/2}$, thus completing the proof.
\end{proof}

\begin{cor}If $D$ is a Fredholm operator with compact parametrices, then $D^*D$ and $DD^*$ are Fredholm operators with compact parametrices.
\end{cor}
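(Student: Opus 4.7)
The plan is to apply Proposition~\ref{ddstarprop} twice: once in its reverse direction to turn the hypothesis into a spectral statement about $D^{*}D$ and $DD^{*}$, and once in its forward direction to conclude that $D^{*}D$ and $DD^{*}$ themselves are Fredholm with compact parametrices. I treat $D^{*}D$ in detail; $DD^{*}$ is handled by swapping $D$ and $D^{*}$.

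First, since $D$ is closed and densely defined, von Neumann's theorem tells us that the operator $T := D^{*}D$ is self-adjoint and nonnegative, with the domain $\operatorname{Dom}(T) = \{x \in \operatorname{Dom}(D) : Dx \in \operatorname{Dom}(D^{*})\}$ being a core for $D$. In particular $T^{*} = T$, so $T^{*}T = TT^{*} = T^{2}$, and checking the hypotheses of Proposition~\ref{ddstarprop} for $T$ amounts to a single condition: that $(I + T^{2})^{-1/2}$ be compact.

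Next, the hypothesis on $D$ combined with the \emph{converse} half of Proposition~\ref{ddstarprop} gives that $(I+D^{*}D)^{-1/2} = (I+T)^{-1/2}$ is compact. Because $T$ is self-adjoint and nonnegative, the spectral theorem (together with the standard characterization of self-adjoint operators with compact resolvent) now forces the spectrum of $T$ to be purely discrete and to consist of eigenvalues $0 \le \lambda_1 \le \lambda_2 \le \cdots$, counted with finite multiplicity, which accumulate only at $+\infty$. Equivalently, there is an orthonormal basis $\{e_n\}$ of $\mathcal{H}$ with $Te_n = \lambda_n e_n$ and $\lambda_n \to \infty$. By the functional calculus,
\begin{equation*}
(I+T^{2})^{-1/2} e_n = (1+\lambda_n^{2})^{-1/2} e_n,
\end{equation*}
so $(I+T^{2})^{-1/2}$ is diagonal in the same basis with eigenvalues tending to $0$, hence compact. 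Applying the \emph{forward} direction of Proposition~\ref{ddstarprop} to $T$ yields that $D^{*}D$ is a Fredholm operator with compact parametrices. The same argument applied to $S := DD^{*}$ gives the analogous conclusion for $DD^{*}$.

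I do not expect any genuine obstacle: the only nontrivial input beyond Proposition~\ref{ddstarprop} is the self-adjointness of $D^{*}D$ and $DD^{*}$ for closed densely defined $D$, which is a standard result and is already implicitly used in Proposition~\ref{ddstarprop}. Everything else is immediate from the spectral theorem for a nonnegative self-adjoint operator with compact resolvent, combined with the observation that $f(t) := (1+t^{2})^{-1/2}$ is a continuous function on $[0,\infty)$ vanishing at infinity.
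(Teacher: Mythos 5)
Your proof is correct, but it follows a different (and longer) route than the paper's. The paper disposes of the corollary in one line: $(I+D^*D)^{-1}=\bigl((I+D^*D)^{-1/2}\bigr)^2$ and $(I+DD^*)^{-1}$ are compact by the converse half of Proposition~\ref{ddstarprop}, and they are precisely the resolvents $R_{D^*D}(-1)$ and $R_{DD^*}(-1)$; the earlier appendix proposition (compactness of the resolvent at a single point of a nonempty resolvent set implies compact parametrices) then gives the conclusion immediately. You start from the same input, namely the converse half of Proposition~\ref{ddstarprop}, but instead of invoking the resolvent criterion you feed $T=D^*D$ back into the \emph{forward} half of Proposition~\ref{ddstarprop}: you use von Neumann's theorem to know $T$ is self-adjoint and nonnegative, the spectral theorem for operators with compact resolvent to diagonalize $T$, and the functional calculus to see that $(I+T^2)^{-1/2}$ is compact. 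Every step is sound (note that $T^*T=TT^*=T^2$, so the hypothesis of Proposition~\ref{ddstarprop} for $T$ indeed reduces to the single condition you check), so the argument is a valid alternative; what it buys is a slightly finer picture (discreteness of $\sigma(D^*D)$ with eigenvalues tending to infinity) at the cost of heavier machinery. You could also shorten your own argument without the spectral theorem: since $T\ge 0$ is self-adjoint, $-1\in\rho(T)$, and $(I+T)^{-1}$, being the square of the compact operator $(I+T)^{-1/2}$, is a compact resolvent of $T$ --- which is exactly the paper's one-line proof.
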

\begin{proof}Notice that $(I+D^*D)^{-1}$ and $(I+DD^*)^{-1}$ are resolvents of $D^*D$ and $DD^*$ respectively, and they are compact by the previous proposition.
\end{proof}

Operators with compact parametrices have the following simple stability property.

\begin{prop}Suppose $D$ is a Fredholm operator with compact parametrices. If~$a$ is a~bounded operator, then $D+a$ is Fredholm with compact parametrices.
\end{prop}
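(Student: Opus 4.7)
The plan is to produce a single compact operator $Q$ that serves as a two-sided parametrix for $D+a$, by reusing the compact parametrix of $D$. First I would note that since $a$ is bounded everywhere and $D$ is closed, the operator $D+a$ with domain $\operatorname{dom}(D)$ is again closed, so it makes sense to ask whether it is Fredholm with compact parametrices in the sense defined in the appendix.

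Next, invoke the fact (established earlier in the appendix, just after the definition) that a Fredholm operator with compact parametrices admits a single bounded operator $Q$ such that both $DQ - I$ and $QD - I$ are compact, and moreover $Q$ itself is compact. Since $Q$ is in particular a right parametrix, $\operatorname{Ran}(Q) \subset \operatorname{dom}(D) = \operatorname{dom}(D+a)$, so the compositions $(D+a)Q$ and $Q(D+a)$ are well defined on $H$ and $\operatorname{dom}(D)$ respectively. Write
\begin{gather*}
(D+a)Q - I = (DQ - I) + aQ, \qquad Q(D+a) - I = (QD - I) + Qa.
\end{gather*}
The first summand on each right-hand side is compact by hypothesis, and $aQ$, $Qa$ are compact as products of a bounded operator with a compact operator. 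Thus $Q$ is simultaneously a left and right parametrix of $D+a$, and $Q$ is compact. Hence $D+a$ is Fredholm with compact parametrices.

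There is essentially no obstacle here beyond bookkeeping: the only point to watch is that the domain of $D+a$ equals $\operatorname{dom}(D)$ and that $Q$ lands inside this common domain, both of which are immediate from the boundedness of $a$ and the fact that $Q$ is already a right parametrix of $D$. If one prefers, an alternative argument would go through Proposition~\ref{ddstarprop}, expressing $(I+(D+a)^*(D+a))^{-1/2}$ as a bounded perturbation of $(I+D^*D)^{-1/2}$ via the resolvent identity, but the parametrix route above is cleaner and avoids self-adjointness computations.
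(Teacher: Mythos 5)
Your argument is correct and is essentially the paper's proof: the paper also observes that a compact parametrix $Q$ of $D$ remains a parametrix of $D+a$, since the extra terms $aQ$ and $Qa$ are compact. Your additional remarks on domains and closedness are fine but not needed beyond that observation.
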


\begin{proof}If $Q$ is a compact parametrix of $D$ then it is also a parametrix of $D+a$.
\end{proof}

We have the following ``sandwich property" for operators with compact parametrices.
\begin{prop}\label{sandwich}
Let $D_i$ be closed operators for $i=1,2,3$, such that $D_1\subset D_2 \subset D_3$. If $D_1$ and $D_3$ are Fredholm operators and $D_2$ has compact parametrices, then both $D_1$ and $D_3$ have compact parametrices.
\end{prop}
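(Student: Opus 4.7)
The plan is to transport a compact two-sided parametrix $Q$ of $D_2$ into a compact one-sided parametrix for each of $D_1$ and $D_3$, and then use the Fredholm hypothesis to upgrade each one-sided parametrix to a two-sided compact parametrix. Fix a compact operator $Q$ and compact operators $K_1,K_2$ with $D_2Q=I+K_1$ on $H$ and $QD_2=I+K_2$ on $\mathrm{Dom}(D_2)$; such $Q$ exists because $D_2$ has compact parametrices (see Proposition~\ref{ddstarprop} and the preceding discussion).

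For $D_3$, the inclusions $\mathrm{Ran}(Q)\subset\mathrm{Dom}(D_2)\subset\mathrm{Dom}(D_3)$ together with $D_3=D_2$ on $\mathrm{Dom}(D_2)$ give $D_3Q=D_2Q=I+K_1$ on all of $H$, so $Q$ is a compact right parametrix of $D_3$. For $D_1$, the inclusion $\mathrm{Dom}(D_1)\subset\mathrm{Dom}(D_2)$ combined with $D_1=D_2$ on $\mathrm{Dom}(D_1)$ yields $QD_1x=QD_2x=x+K_2x$ for every $x\in\mathrm{Dom}(D_1)$, so $Q$ is a compact left parametrix of $D_1$.

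To finish, I would verify the general fact: if a closed Fredholm operator $D$ admits a compact one-sided parametrix, then it has compact (two-sided) parametrices. Indeed, suppose $DQ=I+K$ with $Q$ and $K$ compact, and let $P$ be any bounded left parametrix of $D$, which exists by Fredholmness, with $PD=I+K'$. Then
\begin{gather*}
P=P(DQ)-PK=(I+K')Q-PK=Q+(K'Q-PK),
\end{gather*}
and the right-hand side is compact, so $P$ is compact; hence $D$ has compact parametrices. The symmetric adjoint identity handles the case of a compact left parametrix. Applying this to $D_3$ with the right parametrix $Q$, and to $D_1$ with the left parametrix $Q$, completes the proof.

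The only real obstacle is a bookkeeping one about domains: the chain $D_1\subset D_2\subset D_3$ transports $D_2Q=I+K_1$ upward to $D_3$ (because $\mathrm{Ran}(Q)$ lies inside the larger domain) and transports $QD_2=I+K_2$ downward to $D_1$ (because it holds automatically on the smaller domain), but neither relation transports in the opposite direction. That asymmetry is precisely why the Fredholm hypothesis on the outer operators is needed to symmetrize each one-sided compact parametrix into a two-sided one.
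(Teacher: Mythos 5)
Your proposal is correct and follows essentially the same route as the paper: restrict the compact left-parametrix relation of $D_2$ down to $D_1$, transport the right-parametrix relation up to $D_3$ (possible since $\operatorname{Ran}(Q)\subset\operatorname{dom}(D_2)\subset\operatorname{dom}(D_3)$), and then use Fredholmness of the outer operators together with the standard observation that if one parametrix of a Fredholm operator is compact then so is the other. The paper's proof is just a terser version of the same argument, invoking that observation (stated earlier in the appendix) rather than re-deriving it as you do.
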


\begin{proof}Since $D_2$ has compact parametrices, there exists a compact $Q$ such that $QD_2 = I + K$ for some compact operator~$K$. Since $D_1\subset D_2$ we have $\operatorname{dom}(D_1)\subseteq\operatorname{dom}(D_2)$ and therefore $QD_1 = I + K$. Since $D_1$ is Fredholm it has both left and right parametrices. The above shows that~$D_1$ has a~compact left parametrix. Consequently the right parametrix of $D_1$ must also be compact. A similar argument works for~$D_3$. This completes the proof.
\end{proof}

Next we turn our attention to spectral properties of operators with compact parametrices. As an example consider operators $D_1$, $D_2$, and $D_3$ all equal to $\frac{1}{i}\frac{d}{dx}$ on absolutely continuous functions in~$L^2[0,1]$ but with dif\/ferent boundary conditions: no boundary conditions for $D_1$, $f(0)=0$ for $D_2$, and periodic boundary conditions for~$D_3$. Then the spectrum $\sigma(D_1)$ of $D_1$ is all of~$\C$, $\sigma(D_2)$ is empty, and $D_3$ has a purely point spectrum. They are all Fredholm operators with compact parametrices, $(I+D_iD_i^*)^{-1/2}$ and $(I+D_i^*D_i)^{-1/2}$ are compact since~$D_iD_i^*$ and~$D_i^*D_i$ are Laplace operators on~$L^2[0,1]$ with elliptic boundary conditions.

\begin{prop} Let $D$ be a closed operator with compact parametrices. There are exactly three possibilities for the spectrum of~$D$:
$1)$~$\sigma(D) = \C$, $2)$~$\sigma(D) = \varnothing$, $3)$~$\sigma(D) = \sigma_p(D)$, the point spectrum of~$D$. In the last case, either~$\sigma(D)$ is finite or countably infinite with eigenvalues going to infinity.
\end{prop}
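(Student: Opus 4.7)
The plan is to dichotomize on whether the resolvent set $\rho(D)$ is empty. If $\rho(D) = \varnothing$, then by definition $\sigma(D) = \C$, which is case~$(1)$. So the substantive work is to show that in the remaining case $\rho(D) \ne \varnothing$, we land in either case~$(2)$ or case~$(3)$.

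Assume $\rho(D) \ne \varnothing$ and fix $\lambda_0 \in \rho(D)$. By the first proposition of the appendix, the resolvent $R_D(\lambda_0) = (D - \lambda_0 I)^{-1}$ is compact (since by the second proposition, compact parametrices plus nonempty resolvent set gives compactness of~$R_D(\lambda_0)$). The standard spectral theory of compact operators then tells us that $\sigma(R_D(\lambda_0))$ is either $\{0\}$, or else is of the form $\{0\} \cup \{\nu_n\}$ where the $\nu_n$ are nonzero eigenvalues of finite multiplicity with $\nu_n \to 0$ (if the set is infinite), and each nonzero $\nu_n$ is actually an eigenvalue.

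Next I would invoke the spectral mapping between $D$ and its resolvent: for $\mu \ne \lambda_0$, one has $\mu \in \sigma(D)$ if and only if $(\mu - \lambda_0)^{-1} \in \sigma(R_D(\lambda_0)) \setminus \{0\}$, and the ``if and only if'' applies also at the level of eigenvalues since $R_D(\lambda_0) f = \nu f$ with $\nu \ne 0$ is equivalent to $Df = (\lambda_0 + \nu^{-1})f$. If $\sigma(R_D(\lambda_0)) = \{0\}$, then the correspondence forces $\sigma(D) = \varnothing$, which is case~$(2)$. Otherwise the nonzero eigenvalues $\nu_n$ of $R_D(\lambda_0)$ transfer to eigenvalues $\mu_n = \lambda_0 + \nu_n^{-1}$ of $D$, and these exhaust $\sigma(D)$; since $\nu_n \to 0$ (when infinitely many) we get $|\mu_n| \to \infty$, giving case~$(3)$.

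The only delicate point — and the step I would write out with the most care — is verifying that when $\rho(D) \ne \varnothing$, there is no continuous or residual spectrum, i.e., that $\sigma(D) = \sigma_p(D)$. This is forced by the correspondence above together with the fact that for a compact operator every nonzero spectral value is an eigenvalue of finite multiplicity; any hypothetical $\mu \in \sigma(D) \setminus \sigma_p(D)$ would yield a nonzero $(\mu - \lambda_0)^{-1} \in \sigma(R_D(\lambda_0))$ that is not an eigenvalue, contradicting compactness. With this in hand the trichotomy $\C$ / $\varnothing$ / discrete eigenvalues tending to infinity is complete, and no further estimates are needed.
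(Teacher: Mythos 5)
Your proposal is correct and follows essentially the same route as the paper: reduce to the case $\rho(D)\neq\varnothing$, use the appendix propositions to get compactness of $R_D(\lambda_0)$, and transfer the spectral theory of compact operators to $\sigma(D)$ via the correspondence $\nu\mapsto\lambda_0+\nu^{-1}$ between nonzero eigenvalues of the resolvent and eigenvalues of $D$. The "delicate point" you flag (no continuous or residual spectrum) is exactly what the paper verifies with the resolvent identity and the Fredholm alternative, so the two arguments coincide in substance.
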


\begin{proof}
The examples above demonstrate all three possibilities. Suppose $\sigma(D)\neq \C$, then there exists a $\lambda_0$ such that $R_D(\lambda_0)$ exists. Since $D$ has compact parametrices, $R_D(\lambda_0)$ is compact. By spectral theory of compact operators we have
\begin{gather*}
\sigma(R_D(\lambda_0)) = \{0\}\cup\sigma_p(R_D(\lambda_0))
\end{gather*}
with three possibilities for the point spectrum: it's empty, f\/inite or countably inf\/inite tending to zero.

By assumption on $\lambda_0$ we have $0\notin\sigma(D-\lambda_0I)$. We claim that the mapping $\sigma_p(R_D(\lambda_0))\ni\lambda\mapsto \lambda^{-1}\in\sigma(D-\lambda_0I)$ is a bijection. Consider the following identity
\begin{gather*}
R_D(\lambda_0) - \lambda I = (D-\lambda_0I)^{-1} - \lambda I = -\lambda(D-\lambda_0 I)^{-1}\left((D-\lambda_0I) - \frac{1}{\lambda}I\right) .
\end{gather*}
If $\lambda$ is an eigenvalue for $R_D(\lambda_0)$ then it's clear that $\lambda^{-1}\in\sigma(D-\lambda_0I)$. Now suppose $0\neq \lambda\notin\sigma_p(R_D(\lambda_0))$, then since $R_D(\lambda_0)$ is compact, $R_D(\lambda_0) - \lambda I$ is invertible by the Fredholm alternative. Then we have the following
\begin{gather*}
\left((D-\lambda_0I) - \frac{1}{\lambda}I\right)^{-1} = -\lambda(R_D(\lambda_0) - \lambda I)^{-1}R_D(\lambda_0),
\end{gather*}
and the right-hand side is a bounded operator, which establishes the claim.

Using the bijection we can get all the information about the spectrum of $D$, since we have $\sigma(D-\lambda_0I) = \sigma(D)-\lambda_0$. If $\sigma_p(R_D(\lambda_0)) = \varnothing$ then we get that $\sigma(D) = \varnothing$, if $\sigma_p(R_D(\lambda_0))$ is f\/inite then $\sigma(D)=\sigma_p(D)$ and is f\/inite, and f\/inally if $\sigma_p(R_D(\lambda_0))$ is countably inf\/inite with eigenvalues tending to zero, then $\sigma(D) = \sigma_p(D)$ is countably inf\/inite with eigenvalues going to inf\/inity. This completes the proof.
\end{proof}

The last topic covered in this appendix is an analysis of operators of the form
\begin{gather*}
{\mathcal D} = \left[
\begin{matrix}
0 & D \\
D^* & 0
\end{matrix}\right],
\end{gather*}
which appear in the def\/inition of an even spectral triple.

\begin{prop}The operator ${\mathcal D}$ has compact parametrices if and only if the operator $D$ has compact parametrices.
\end{prop}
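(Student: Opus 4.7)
The plan is to reduce this to Proposition~\ref{ddstarprop}, which characterizes compact parametrices in terms of compactness of $(I+D^*D)^{-1/2}$ and $(I+DD^*)^{-1/2}$.

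First I would observe that $\mathcal{D}$ is formally self-adjoint: since $D$ is closed, $D^*$ is densely defined and closed, so the off-diagonal block matrix is self-adjoint on the natural domain $\operatorname{dom}(D^*)\oplus\operatorname{dom}(D)$. Hence $\mathcal{D}^*\mathcal{D}=\mathcal{D}\mathcal{D}^*=\mathcal{D}^2$, and a direct block computation gives
\begin{gather*}
\mathcal{D}^2=\begin{bmatrix} DD^* & 0 \\ 0 & D^*D \end{bmatrix}.
\end{gather*}
By the functional calculus for the self-adjoint operator $\mathcal{D}$ we then have
\begin{gather*}
(I+\mathcal{D}^*\mathcal{D})^{-1/2}=(I+\mathcal{D}^2)^{-1/2}=\begin{bmatrix} (I+DD^*)^{-1/2} & 0 \\ 0 & (I+D^*D)^{-1/2} \end{bmatrix},
\end{gather*}
and likewise for $(I+\mathcal{D}\mathcal{D}^*)^{-1/2}$, which equals the same block-diagonal operator.

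Next I would apply Proposition~\ref{ddstarprop} in both directions. By that proposition applied to $\mathcal{D}$, the operator $\mathcal{D}$ has compact parametrices if and only if $(I+\mathcal{D}^*\mathcal{D})^{-1/2}$ and $(I+\mathcal{D}\mathcal{D}^*)^{-1/2}$ are compact. From the block-diagonal form above, this block operator is compact if and only if each of its diagonal entries is compact, i.e., if and only if both $(I+D^*D)^{-1/2}$ and $(I+DD^*)^{-1/2}$ are compact. Applying Proposition~\ref{ddstarprop} once more, this time to $D$, this is equivalent to $D$ having compact parametrices, finishing the proof.

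There is essentially no obstacle here; the only mild subtlety is justifying the block decomposition of $(I+\mathcal{D}^2)^{-1/2}$, which follows from the fact that the two block entries commute with the block grading and that compactness of a block-diagonal operator on an orthogonal direct sum is equivalent to compactness of each diagonal block. This is standard once self-adjointness of $\mathcal{D}$ on the correct domain has been verified.
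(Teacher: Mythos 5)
Your proof is correct, and it takes a somewhat different route from the paper's. You run both directions through Proposition~\ref{ddstarprop}: after checking that $\mathcal{D}$ is self-adjoint on $\operatorname{dom}(D^*)\oplus\operatorname{dom}(D)$ and that $\mathcal{D}^2$ is block diagonal with entries $DD^*$ and $D^*D$, the statement becomes a chain of equivalences, since compactness of the block-diagonal operator $(I+\mathcal{D}^2)^{-1/2}$ is the same as compactness of both $(I+DD^*)^{-1/2}$ and $(I+D^*D)^{-1/2}$, which by Proposition~\ref{ddstarprop} applied to $D$ is equivalent to $D$ having compact parametrices. The paper instead argues the two implications separately: for the direction ``$D$ compact parametrices $\Rightarrow$ $\mathcal{D}$ compact parametrices'' it constructs an explicit parametrix $\left[\begin{matrix}0 & Q^*\\ Q & 0\end{matrix}\right]$ from a compact parametrix $Q$ of $D$, which needs neither self-adjointness of $\mathcal{D}$ nor Proposition~\ref{ddstarprop}; for the converse it writes out the resolvent $(\mathcal{D}+i\lambda I)^{-1}$ in block form and reads off compactness of $(\lambda^2I+DD^*)^{-1}$ and $(\lambda^2I+D^*D)^{-1}$ from the diagonal entries before invoking Proposition~\ref{ddstarprop}, which is morally the same computation as yours in that direction. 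What your version buys is a single symmetric argument; what it costs is the need to verify self-adjointness of $\mathcal{D}$ on the stated domain (a standard fact for closed, densely defined $D$, which you correctly flag, and which also lurks behind the paper's use of the resolvent at the purely imaginary point $-i\lambda$) together with the functional-calculus identification of $(I+\mathcal{D}^2)^{-1/2}$ blockwise. Both of these are routine, so your proof is complete as written.
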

\begin{proof} If $Q$ is a compact parametrix of~$D$, let~$K_1$ and~$K_2$ be the compact operators such that $DQ = I + K_1$ and $QD = I + K_2$. Using $Q$ we can construct a parametrix of~${\mathcal D}$ by
\begin{gather*}
{\mathcal D}\left[
\begin{matrix}
0 & Q^* \\
Q & 0
\end{matrix}\right] = \left[
\begin{matrix}
I + K_1 & 0 \\
0 & I + K_2^*
\end{matrix}\right],
\end{gather*}
and similarly for the multiplication in the reverse order. These imply that ${\mathcal D}$ has compact parametrices. Conversely, if ${\mathcal D}$ has compact parametrices, its resolvent is compact. We can write down the resolvent for imaginary $-i\lambda$ as follows
\begin{gather*}
({\mathcal D} +i\lambda I)^{-1} = \left[
\begin{matrix}
i\lambda I & D \\
D^* & i\lambda I
\end{matrix}\right]^{-1} = \left[
\begin{matrix}
-i\lambda(\lambda^2I + DD^*)^{-1} & D(\lambda^2I + D^*D)^{-1} \\
D^*(\lambda^2I + DD^*)^{-1} & -i\lambda(\lambda^2I + D^*D)^{-1}
\end{matrix}\right].
\end{gather*}
Inspecting the diagonal elements of the above matrix we see that $D$ has compact parametrices by Proposition~\ref{ddstarprop}.
\end{proof}

\subsection*{Acknowledgements}
 We would like to thank the anonymous referees for relevant remarks to improve the paper.

\pdfbookmark[1]{References}{ref}
\LastPageEnding


\begin{thebibliography}{99}
\footnotesize\itemsep=0pt

\bibitem{BH}
Barr\'{\i}a J., Halmos P.R., Asymptotic {T}oeplitz operators, \href{https://doi.org/10.2307/1999932}{\textit{Trans.
 Amer. Math. Soc.}} \textbf{273} (1982), 621--630.

\bibitem{BS}
Battisti U., Seiler J., Boundary value problems with
 {A}tiyah--{P}atodi--{S}inger type conditions and spectral triples,
 \textit{J.~Noncommut. Geom.}, {t}o appear, \href{https://arxiv.org/abs/1503.02897}{arXiv:1503.02897}.

\bibitem{BBW}
Boo{\ss}-Bavnbek B., Wojciechowski K.P., Elliptic boundary problems for {D}irac
 operators, \href{https://doi.org/10.1007/978-1-4612-0337-7}{\textit{Mathematics: Theo\-ry {\rm \&} Applications}}, Birkh\"auser Boston, Inc.,
 Boston, MA, 1993.

\bibitem{B}
Bratteli O., Derivations, dissipations and group actions on {$C^*$}-algebras,
 \href{https://doi.org/10.1007/BFb0098817}{\textit{Lecture Notes in Math.}}, Vol.~1229, Springer-Verlag, Berlin, 1986.

\bibitem{BEJ}
Bratteli O., Elliott G.A., Jorgensen P.E.T., Decomposition of unbounded
 derivations into invariant and approximately inner parts, \href{https://doi.org/10.1515/crll.1984.346.166}{\textit{J.~Reine
 Angew. Math.}} \textbf{346} (1984), 166--193.

\bibitem{BHS}
Brown A., Halmos P.R., Shields A.L., Ces\`aro operators, \textit{Acta Sci.
 Math. (Szeged)} \textbf{26} (1965), 125--137.

\bibitem{CKW}
Carey A.L., Klimek S., Wojciechowski K.P., A {D}irac type operator on the
 non-commutative disk, \href{https://doi.org/10.1007/s11005-010-0391-7}{\textit{Lett. Math. Phys.}} \textbf{93} (2010),
 107--125.

\bibitem{Cob}
Coburn L.A., The {$C^{\ast} $}-algebra generated by an isometry, \href{https://doi.org/10.1090/S0002-9904-1967-11845-7}{\textit{Bull.
 Amer. Math. Soc.}} \textbf{73} (1967), 722--726.

\bibitem{Connes}
Connes A., Noncommutative geometry, Academic Press, Inc., San Diego, CA, 1994.

\bibitem{C}
Connes A., Cyclic cohomology, quantum group symmetries and the local index
 formula for {${\rm SU}_q(2)$}, \href{https://doi.org/10.1017/S1474748004000027}{\textit{J.~Inst. Math. Jussieu}} \textbf{3}
 (2004), 17--68, \href{https://arxiv.org/abs/math.QA/0209142}{math.QA/0209142}.

\bibitem{DD}
D'Andrea F., D\c{a}browski L., Local index formula on the equatorial {P}odle\'s
 sphere, \href{https://doi.org/10.1007/s11005-005-0047-1}{\textit{Lett. Math. Phys.}} \textbf{75} (2006), 235--254,
 \href{https://arxiv.org/abs/math.QA/0507337}{math.QA/0507337}.

\bibitem{FGMR}
Forsyth I., Gof\/feng M., Mesland B., Rennie A., Boundaries, spectral triples and
 {$K$}-homology, \href{https://arxiv.org/abs/1607.07143}{arXiv:1607.07143}.

\bibitem{H}
Hadf\/ield T., The noncommutative geometry of the discrete {H}eisenberg group,
 \textit{Houston~J. Math.} \textbf{29} (2003), 453--481.

\bibitem{J}
Jorgensen P., Approximately inner derivations, decompositions and vector f\/ields
 of simple {$C^*$}-algebras, in Mappings of Operator Algebras ({P}hiladelphia,
 {PA}, 1988), \href{https://doi.org/10.1007/978-1-4612-0453-4_2}{\textit{Progr. Math.}}, Vol.~84, Birkh\"auser Boston, Boston, MA,
 1990, 15--113.

\bibitem{KR}
Kadison R.V., Ringrose J.R., Fundamentals of the theory of operator algebras,
 {V}ol.~{II}.~Advanced theory, \textit{Pure and Applied Mathematics}, Vol.~100, Academic Press, Inc., Orlando, FL, 1986.

\bibitem{KL}
Klimek S., Lesniewski A., Quantum {R}iemann surfaces. {I}.~{T}he unit disc,
 \href{https://doi.org/10.1007/BF02099210}{\textit{Comm. Math. Phys.}} \textbf{146} (1992), 103--122.

\bibitem{KM1}
Klimek S., McBride M., d-bar operators on quantum domains, \href{https://doi.org/10.1007/s11040-010-9084-9}{\textit{Math. Phys.
 Anal. Geom.}} \textbf{13} (2010), 357--390, \href{https://arxiv.org/abs/1001.2216}{arXiv:1001.2216}.

\bibitem{KMB2}
Klimek S., McBride M., Rathnayake S., Derivations and spectral triples on
 quantum domains~{II}: Quantum annulus, {i}n preparation.

\bibitem{S}
Sakai S., Operator algebras in dynamical systems, \href{https://doi.org/10.1017/CBO9780511662218}{\textit{Encyclopedia of
 Mathematics and its Applications}}, Vol.~41, Cambridge University Press,
 Cambridge, 1991.

\bibitem{Sc}
Schechter M., Basic theory of {F}redholm operators, \textit{Ann. Scuola Norm.
 Sup. Pisa} \textbf{21} (1967), 261--280.

\bibitem{St}
Stacey P.J., Crossed products of {$C^\ast$}-algebras by {$\ast$}-endomorphisms,
 \href{https://doi.org/10.1017/S1446788700037113}{\textit{J.~Austral. Math. Soc. Ser.~A}} \textbf{54} (1993), 204--212.

\end{thebibliography}
\end{document}